\documentclass{article}

\usepackage{amsmath,amsfonts,amssymb,url,theorem}

\usepackage[utf8]{inputenc}

{\theorembodyfont{\slshape}
\newtheorem{proposition}{Proposition}[section]
\newtheorem{lemma}[proposition]{Lemma}
\newtheorem{corollary}[proposition]{Corollary}

\newtheorem{theorem}[proposition]{Theorem}}

{\theorembodyfont{\upshape}
\newtheorem{remark}[proposition]{Remark}
\newtheorem{example}[proposition]{Example}
}

\newcommand{\eps}{\varepsilon}

\newcommand{\bbC}{{\mathbb C}}
\newcommand\N{{\mathbb N}}
\renewcommand\P{{\mathbb P}}
\newcommand\Q{{\mathbb Q}}
\newcommand\R{{\mathbb R}}
\newcommand\Z{{\mathbb Z}}

\newcommand{\calA}{{\mathcal{A}}}
\newcommand{\calC}{{\mathcal{C}}}
\newcommand{\DD}{\mathcal{D}}

\newcommand{\norm}{{\mathcal{N}}}
\newcommand{\OO}{{\mathcal{O}}}

\newcommand{\XX}{{\mathcal{X}}}

\newcommand{\tilE}{{\widetilde E}}
\newcommand{\tilF}{{\widetilde F}}

\newcommand{\tilR}{{\widetilde R}}

\newcommand\tilu{{\tilde u}}

\newcommand\qed{\hfill$\square$}

\newcommand{\Mu}{{\mathrm{M}\,}}

\newcommand{\Height}{{\mathrm{H}}}

\newcommand{\Log}{{\mathrm{Log}\,}}

\newcommand{\Spec}{{\mathrm{Spec}\,}}

\newcommand{\sh}{{\mathrm{sh}}}
\newcommand{\nr}{{\mathrm{nr}}}

\newcommand{\gera}{{\mathfrak{a}}}
\newcommand{\gerb}{{\mathfrak{b}}}
\newcommand{\gerp}{{\mathfrak{p}}}

\newcommand{\one}{{\mathbf 1}}
\newcommand{\bfg}{{\mathbf g}}
\newcommand{\bfx}{{\mathbf x}}

\newcommand{\size}[1]{{\overline{|#1|}}{}}
\newcommand{\lsize}[1]{{\underline{|#1|}}}

\DeclareMathOperator{\Supp}{Supp}

\newenvironment{proof}{\paragraph{Proof}}{}

\title{Counting Number Fields in Fibers}

\author{Yuri Bilu\footnote{Institut de Mathématiques de Bordeaux; \textsf{yuri@math.u-bordeaux.fr}}\\
\stepcounter{footnote}\stepcounter{footnote}
(with an appendix by Jean Gillibert\footnote{Institut de Mathématiques de Toulouse}\ )}

\setcounter{tocdepth}1

\makeatletter

\renewcommand*\l@section[2]{%
  \ifnum \c@tocdepth >\z@
    \addpenalty\@secpenalty
    \addvspace{0.2em \@plus\p@}%
    \setlength\@tempdima{1.5em}%
    \begingroup
      \parindent \z@ \rightskip \@pnumwidth
      \parfillskip -\@pnumwidth
      \leavevmode \bfseries
      \advance\leftskip\@tempdima
      \hskip -\leftskip
      #1\nobreak\hfil \nobreak\hb@xt@\@pnumwidth{\hss #2}\par
    \endgroup
  \fi}
  
 \makeatother

\begin{document}

\hfuzz 6pt

\maketitle

\begin{abstract}
Let~$X$ be a projective curve over~$\Q$ and ${t\in \Q(X)}$ a non-constant rational function of degree ${n\ge 2}$.  For every ${\tau\in \Z}$ pick ${P_\tau\in X(\bar\Q)}$ such that ${t(P_\tau)=\tau}$.  Dvornicich and Zannier proved that, for  large~$N$,  the field $\Q(P_1, \ldots, P_N)$ is of degree at least $e^{cN/\log N}$ over~$\Q$, where ${c>0}$ depends only on~$X$ and~$t$. In this paper we extend this result, replacing~$\Q$ by an arbitrary number field. 
\end{abstract}

{\footnotesize 

\tableofcontents

}

\section{Introduction}

\textit{Everywhere in this paper ``curve'' means ``smooth geometrically irreducible projective algebraic curve''. }

\bigskip

Let~$X$ be a  curve over~$\Q$  and ${t\in \Q(X)}$ a non-constant rational function of degree ${n\ge 2}$. 
According to the Hilbert Irreducibility Theorem, for infinitely many (in fact, ``overwhelmingly many'') ${\tau\in \Z}$  the fiber ${t^{-1}(\tau)\subset X(\bar \Q)}$ is $\Q$-irreducible; that is, the Galois group ${G_\Q=G_{\bar\Q/\Q}}$ acts on $t^{-1}(\tau)$ transitively. This can also be re-phrased as follows: for every ${\tau \in \Z}$ pick ${P_\tau\in t^{-1}(\tau)}$; then for infinitely many ${\tau\in \Z}$ we have ${[\Q(P_\tau):\Q]=n}$.  See Subsection~\ref{shilb} for a precise statement.

Hilbert's Irreducibility Theorem, however, does not answer the following natural question: among the field $\Q(P_\tau)$, are there ``many'' distinct? This question is addressed in the article of Dvornicich and Zannier~\cite{DZ94}, where the following theorem is proved (see \cite[Theorem~2(a)]{DZ94}). 

\begin{theorem} 
\label{tdvz}
In the above set-up, there exist real numbers ${c>0}$, depending on~$n$ and on the genus ${\bfg=\bfg(X)}$ and ${N_0> 1}$, depending on~$X$ and~$t$, 
such that, for every  integer ${N\ge N_0}$ the number field ${\Q(P_1,\ldots, P_N)}$ is of degree at least $e^{cN/\log N}$ over~$\Q$.
\end{theorem}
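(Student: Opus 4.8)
The plan is to exhibit, inside $\Q(P_1,\dots,P_N)$, a compositum of $\gg_{n,\bfg} N/\log N$ number fields which are \emph{mutually independent}, in the sense that each is ramified at a rational prime at which all the others are unramified; such a compositum already has degree at least $2$ to the number of factors, which gives the bound. To prepare the ground, choose a primitive element $y$ of $\Q(X)/\Q(t)$ that is regular on $t^{-1}(\mathbb{A}^1)$ and — using Riemann--Roch to bound its pole orders above $t=\infty$ in terms of $n$ and $\bfg$ — has a monic minimal polynomial $F\in\Z[T,Y]$ of degree $n$ in $Y$; then $\Q(P_\tau)=\Q(y_\tau)$ for a root $y_\tau$ of $F(\tau,Y)$. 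Put $\Delta(T)=\operatorname{disc}_Y F(T,Y)\in\Z[T]$ and $\delta=\deg\Delta$; since $\deg t=n\ge2$ and $\mathbb{A}^1_{\bar\Q}$ is simply connected, $t$ cannot be unramified over $\mathbb{A}^1$, so $\Delta\ne0$. The elementary fact used throughout is: if a prime $p\nmid\Delta(\tau)$ then $F(\tau,Y)$ is separable modulo $p$, hence $p$ is unramified in $\Q(P_\tau)$.

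Next, I would manufacture ramification from the branch locus. Fix a point $Q\in X$ above a finite branch point $b$ of $t$ with ramification index $e\ge2$; the branch locus has at most $2\bfg-2+2n$ points, so $k:=\Q(Q)$ has degree $O_{n,\bfg}(1)$. Over $k$ there is, locally at $Q$, a relation $t-b=z^{e}u$ with $z$ a uniformiser and $u$ a unit at $Q$. Let $q$ be a rational prime avoiding a fixed finite bad set, unramified in $k$, and admitting a degree-one prime $\mathfrak p$ of $k$ above it; the last condition holds for a set of $q$ of density $\delta_0=\delta_0(n,\bfg)>0$ by the Chebotarev density theorem, and it ensures $k_{\mathfrak p}=\Q_q$, so $b\in\Q_q$. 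I claim: if $\tau\in\Z$ satisfies $v_{\mathfrak p}(\tau-b)=1$ and the fibre $t^{-1}(\tau)$ is $\Q$-irreducible, then $q$ ramifies in $\Q(P_\tau)$. Indeed, among the $n$ points of $t^{-1}(\tau)(\bar\Q_q)$ there are $e$ reducing modulo $\mathfrak p$ into the residue disc of $Q$; for such a point, the local relation (with $u$ a unit there) gives $e\cdot v_q(z(\cdot))=v_q(\tau-b)=1$, so $1/e$ lies in the value group of its completion over $\Q_q$, i.e. its ramification index over $\Q_q$ is divisible by $e\ge2$. Since $t^{-1}(\tau)$ is a single $G_\Q$-orbit, $\Q(P_\tau)$ equals the residue field of that disc and is therefore ramified at $q$.

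Now for the selection and the counting. For each prime $q\in(\sqrt N,\,N/2]$ of the above density type, the progression $\{\tau\in[1,N]:\tau\equiv b\ (\mathrm{mod}\ \mathfrak p)\}$ contains at least two consecutive terms; of any two consecutive terms at least one has $v_{\mathfrak p}(\tau-b)=1$. Since a quantitative form of Hilbert's Irreducibility Theorem bounds the number of $\tau\le N$ with $t^{-1}(\tau)$ reducible by $o(N/\log N)$, and since each such $\tau$ belongs to the progression of only $O_{n,\bfg}(1)$ primes $q>\sqrt N$ (any prime factor of $\mathrm{N}_{k/\Q}(\tau-b)$ exceeding $\sqrt N$ is one of $O_{n,\bfg}(1)$ of them, as their product is $\le C\,\tau^{[k:\Q]}$), we may discard $o(N/\log N)$ of the $q$ and choose for each survivor a $\tau_q\in[1,N]$ with $v_{\mathfrak p}(\tau_q-b)=1$ and $t^{-1}(\tau_q)$ $\Q$-irreducible, keeping $q\mapsto\tau_q$ injective at the cost of an $O(1)$ factor. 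Declare $q,q'$ to interfere if $q\mid\Delta(\tau_{q'})$ or $q'\mid\Delta(\tau_q)$; since $|\Delta(\tau_{q'})|\le C\,N^{\delta}$ has at most $O_{\delta}(1)$ prime factors exceeding $\sqrt N$, the interference graph on the surviving primes has bounded degree and hence an independent set $\mathcal Q$ with $|\mathcal Q|\gg_{n,\bfg}N/\log N$. For $q\in\mathcal Q$ the field $\Q(P_{\tau_q})$ is ramified at $q$, while for every other $q'\in\mathcal Q$ we have $q\nmid\Delta(\tau_{q'})$, so $\Q(P_{\tau_{q'}})$ is unramified at $q$ by the divisibility criterion. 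Ordering $\mathcal Q$ and using that a compositum of extensions unramified at $q$ is unramified at $q$, adjoining each successive $\Q(P_{\tau_q})$ at least doubles the degree, whence
\[
[\Q(P_1,\dots,P_N):\Q]\ \ge\ \bigl[\textstyle\prod_{q\in\mathcal Q}\Q(P_{\tau_q}):\Q\bigr]\ \ge\ 2^{|\mathcal Q|}\ \ge\ e^{cN/\log N}.
\]

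The hard part is the claim in the second paragraph: extracting honest ramification of the residue field $\Q(P_\tau)$ — not merely the divisibility $q\mid\Delta(\tau)$, and not merely ramification after enlarging the base to $k$ — from the local expansion $t-b=z^{e}u$. This is what forces both the careful Riemann--Roch choice of model keeping $\deg\Delta$ and $[\Q(b):\Q]$ bounded in terms of $n$ and $\bfg$, and the restriction to $\Q$-irreducible fibres, needed because the points $P_\tau$ are given arbitrarily and $\Q(P_\tau)$ must not depend on which point of the fibre is chosen. Once this is in place, verifying that every implied constant, and in particular $c$, depends only on $n$ and $\bfg$ (and that $N_0$ may depend on $X$ and $t$, through the bad primes, the constant $C$, and the quantitative Hilbert estimate) is routine bookkeeping.
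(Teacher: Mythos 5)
Your argument is in substance the Dvornicich--Zannier argument, which is exactly what the paper adapts in Section~\ref{sardz} to prove the $K$-version (Theorem~\ref{tmain}): produce arithmetic ramification at a degree-one prime $q$ of $\Q(b)$ by hitting a branch point $b$ with exact valuation one (the paper's Theorem~\ref{tram} and Proposition~\ref{pprim}), count such $q$ by Chebotarev, remove reducible fibres by quantitative Hilbert irreducibility, and cap the number of ``large'' ramified primes per~$\tau$ via the discriminant; the paper's Proposition~\ref{pfew} is precisely your observation that $\Delta(\tau)$ has $O_\delta(1)$ prime factors exceeding $\sqrt N$.

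The one mis-step worth flagging is the claim that the interference graph has \emph{bounded degree}. For fixed $q'$, the set $\{q:q\mid\Delta(\tau_{q'})\}$ does have $O_\delta(1)$ elements, but the other half of the degree of $q'$, namely $\{q:q'\mid\Delta(\tau_q)\}$, is not bounded: $q'\mid\Delta(\tau_q)$ only forces $\tau_q$ into one of at most $\delta$ residue classes mod $q'$, and there are $O(N/q')=O(\sqrt N)$ such $\tau$ in $[1,N]$, each potentially claimed by some $q$. What is true, and suffices, is that the total edge count is $O_\delta(1)\cdot(\text{number of vertices})=O(N/\log N)$, since every edge is witnessed by some ordered pair $(q,q')$ with $q\mid\Delta(\tau_{q'})$; a Tur\'an-type bound (a graph on $r$ vertices with $O(r)$ edges has an independent set of size $\gg r$) then gives $|\mathcal Q|\gg N/\log N$. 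Alternatively — and this is how the paper organizes the count — fix an $\N$-ordering of $\OO_K$ compatible with size and attach to each prime~$v$ the \emph{first} $\tau$ at which $v$ ramifies in $K(t^{-1}(\tau))$; the fibres of this assignment have size at most $m$ by Proposition~\ref{pfew}, giving $|\Omega(B)|\ge|\Mu(B)|/m$ directly, with no graph, no injectivity step, and no deduplication to manage.
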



One may note that the statement holds true independently of the choice of the points $P_\tau$. 

An immediate consequence is the following result.

\begin{corollary}
\label{cdvz}
In the above set-up, there exist real numbers ${c=c(\bfg,n)>0}$ and ${N_0=N_0(X,t)>1}$  such that, for every integer ${N\ge N_0}$,  among the number fields $\Q(P_1), \ldots, \Q(P_N)$ there are at least $cN/\log N$ distinct. 
\end{corollary}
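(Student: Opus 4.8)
The plan is to deduce the Corollary directly from Theorem~\ref{tdvz} by a short counting argument comparing the number of distinct fields with the degree of their compositum. First I would note that, since $P_\tau$ lies in the fiber $t^{-1}(\tau)$, which consists of at most~$n$ points of $X(\bar\Q)$, each field $\Q(P_\tau)$ has degree at most~$n$ over~$\Q$. Hence, if among $\Q(P_1),\ldots,\Q(P_N)$ there are exactly~$k$ distinct ones, say $K_1,\ldots,K_k\subset\bar\Q$, then
\[
\Q(P_1,\ldots,P_N)=K_1\cdots K_k,
\]
the compositum being taken inside~$\bar\Q$, and the standard submultiplicativity of degrees under composita gives ${[\Q(P_1,\ldots,P_N):\Q]\le [K_1:\Q]\cdots[K_k:\Q]\le n^k}$.

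Next I would combine this with the lower bound of Theorem~\ref{tdvz}: for every integer ${N\ge N_0}$ one has ${n^k\ge [\Q(P_1,\ldots,P_N):\Q]\ge e^{cN/\log N}}$, where ${c=c(\bfg,n)>0}$ and ${N_0=N_0(X,t)>1}$ are as in that theorem. Taking logarithms yields ${k\ge (c/\log n)\cdot N/\log N}$, so the Corollary follows with the constant ${c/\log n}$ in place of~$c$, which still depends only on~$\bfg$ and~$n$, and with the same~$N_0$. As in Theorem~\ref{tdvz}, the conclusion does not depend on the choice of the points~$P_\tau$.

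There is no real obstacle here, the argument being a two-line consequence of Theorem~\ref{tdvz}. The only step meriting a word of justification is the inequality ${[K_1\cdots K_k:\Q]\le\prod_{i}[K_i:\Q]}$, which follows by an easy induction from the elementary bound ${[KL:\Q]\le[K:\Q][L:\Q]}$ valid for any two subfields ${K,L\subset\bar\Q}$ of finite degree over~$\Q$.
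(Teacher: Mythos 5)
Your argument is correct and is exactly the argument implicit in the paper's one-line remark that the corollary is an ``immediate consequence'' of Theorem~\ref{tdvz}: each $\Q(P_\tau)$ has degree at most $n$ since $P_\tau$ lies in a fiber of size at most $n$, so if only $k$ of these fields are distinct the compositum has degree at most $n^k$, and comparing with the lower bound $e^{cN/\log N}$ gives $k\ge (c/\log n)\,N/\log N$. Nothing more is needed, and your bookkeeping of the constants (the new constant $c/\log n$ still depends only on $\bfg$ and $n$, and $N_0$ is unchanged) is the right thing to check.
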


Theorem~\ref{tdvz} is best possible, as obvious examples show.  Say, if~$X$ is (the projectivization of) the plane curve ${t=u^2}$ and~$t$ is  the coordinate function, then  the field 
$$
\Q(P_1,\ldots, P_N)=\Q(\sqrt1,\sqrt2, \ldots,\sqrt N)=\Q(\sqrt p: p\le N)
$$
is of degree ${2^{\pi(N)}\le e^{cN/\log N}}$. 
On the contrary, Corollary~\ref{cdvz} is not  best possible and was recently refined in~\cite{BL16}. See the introduction of~\cite{BL16} for a brief discussion. 

The purpose of the present article is extending Theorem~\ref{tdvz} from the base field~$\Q$ to an arbitrary number field. Such an extension is required for certain applications; see, for instance,~\cite{BG16}. Our principal result is the following theorem. 


\begin{theorem}
\label{tmain}
Let~$K$ be a number field of degree~$d$ over~$\Q$. Further, let~$X$ be a  curve over~$K$ of genus~$\bfg$ and ${t\in K(X)}$ a non-constant rational function of degree ${n\ge 2}$. 
There exist  real numbers ${c=c(K,\bfg,n)>0}$ and ${B_0=B_0(K,X,t)>1}$ such that the following holds. Pick ${P_\tau \in t^{-1}(\tau)}$ for every ${\tau \in \OO_K}$. Then for every ${B\ge B_0}$  the number field 
$$
K(P_\tau:\ \tau\in \OO_K,\ \Height(\tau)\le B)
$$ 
is of degree at least $e^{cB^d/\log B}$ over~$K$. 
\end{theorem}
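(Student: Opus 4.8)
The plan is to reduce Theorem~\ref{tmain} over~$K$ to the situation already handled by Theorem~\ref{tdvz} over~$\Q$, by means of restriction of scalars. Concretely, write ${d=[K:\Q]}$ and let ${K_1,\ldots,K_d}$ be the embeddings of~$K$ into~$\bar\Q$, with conjugate data ${X^{(i)}}$, ${t^{(i)}\in K_i(X^{(i)})}$. Form the Weil restriction, or more elementarily work with the product ${Y=X^{(1)}\times\cdots\times X^{(d)}}$ equipped with the function ${s=t^{(1)}\boxtimes\cdots\boxtimes t^{(d)}}$ viewing things birationally; the point is that a nonzero integer ${m\in\Z}$, with ${\Height(m)}$ small, corresponds to choosing one preimage in each factor. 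The key numerical input is that the number of rational integers~$\tau$ of~$\OO_K$ with ${\Height(\tau)\le B}$ is of order ${B^d}$ (with the implied constant depending on~$K$), which is exactly where the exponent ${B^d}$ in the target bound comes from: we want to feed roughly ${N\asymp B^d}$ fibers into the ${\Q}$-statement.

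First I would make precise the counting of fiber points we control. Set ${S=\{\tau\in\OO_K:\Height(\tau)\le B\}}$, so ${|S|\ge c_1 B^d}$ for ${B\ge B_0}$ by a standard lattice-point count in the Minkowski embedding of~$\OO_K$. For each ${\tau\in S}$ we have the chosen point ${P_\tau}$, and ${K(P_\tau:\tau\in S)}$ contains the compositum of all the ${K(P_\tau)}$. Next I would descend to~$\Q$: the field ${L=K(P_\tau:\tau\in S)}$ satisfies ${[L:\Q]=[L:K][K:\Q]\le d\,[L:K]}$, hence it suffices to bound ${[L:\Q]}$ from below by ${e^{c_2 B^d/\log B}}$, and for that one may enlarge the field at will. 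The natural enlargement is to pass to the curve ${\tilde X}$ over~$\Q$ obtained as (a component of) the restriction of scalars ${\mathrm{R}_{K/\Q}X}$, or more simply to choose a primitive element and work with the ``norm'' model: there is a curve~$X_\Q$ over~$\Q$ and a degree-$nd$ (in general) function ${t_\Q}$ such that for ${m\in\Z}$, the fiber ${t_\Q^{-1}(m)}$ consists of tuples of preimages of~$m$ under the conjugates of~$t$, and ${\Q(Q_m)\subseteq L}$ when ${Q_m}$ is suitably chosen over ${m\in S\cap\Z}$. The difficulty here is that ${S\cap\Z}$ has only ${O(B)}$ elements, not ${O(B^d)}$, which would give only the Dvornicich–Zannier bound with ${N\asymp B}$, i.e.\ ${e^{cB/\log B}}$ — far weaker than wanted.

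So the real idea must be different: rather than restricting to rational integer values, one keeps the full set~$S$ of ${\asymp B^d}$ algebraic integers and runs the Dvornicich–Zannier counting argument \emph{directly} over~$K$. I would therefore revisit the proof of Theorem~\ref{tdvz} and check that its mechanism is insensitive to the base field. That proof proceeds by: (i) choosing, via Riemann–Roch / Hurwitz, a suitable auxiliary function or a bound on the number of branch points, so that for all but ${O(\log N)}$... more precisely, for a positive proportion of the values, the specialized field ${K(P_\tau)}$ has a prime of ${\OO_{K(P_\tau)}}$ lying over a rational prime ${p}$ dividing the ``discriminant-type'' quantity attached to~$\tau$, combined with (ii) a pigeonhole/combinatorial estimate: if the compositum had small degree~$D$ over~$K$, then only ${O(D\log D)}$ (or so) primes could ramify, whereas the construction forces ${\gg N/\log N}$ essentially distinct ramified primes among the ${t^{-1}(\tau)}$, ${\Height(\tau)\le B}$. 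Replacing~$N$ by ${|S|\asymp B^d}$ throughout, and noting that the ramification bookkeeping works verbatim over~$\OO_K$ (primes of~$\OO_K$ in place of primes of~$\Z$; the prime-counting function ${\pi_K(x)\sim x/\log x}$ plays the role of~$\pi(x)$), yields ${D\ge e^{cB^d/\log B}}$.

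The main obstacle I anticipate is precisely the ramification-counting step (ii) in the new setting: over~$\Q$, the Dvornicich–Zannier argument exploits that a number field of degree~$D$ is ramified over at most ${O(D\log D)}$ rational primes (a consequence of the discriminant bound ${\log|d_F|\ll D\log D}$ via Minkowski/Stickelberger, or of the fact that ${\mathrm{ord}_p(d_F)\le D-1+v_p(D)D}$...); one needs the analogous statement with ${\Z}$ replaced by~$\OO_K$, namely that an extension ${F/K}$ of degree~$D$ is ramified over at most ${\ll_K D\log D}$ primes of~$\OO_K$, which follows from the relative discriminant bound, but one must track the dependence on~$K$ carefully to land the constant~$c$ depending only on ${K,\bfg,n}$. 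A secondary technical point is ensuring that for a positive density of ${\tau\in S}$ the fiber is irreducible and contributes a genuinely ``new'' ramified prime — this is where one invokes Hilbert irreducibility over~$K$ together with a Chebotarev-type count, again uniform in~$K$. Once these two uniform-in-$K$ ingredients are in place, substituting ${N=|S|\asymp_K B^d}$ into the $\Q$-proof gives the theorem, with ${B_0}$ absorbing the ineffective error terms that depend on~$X$ and~$t$.
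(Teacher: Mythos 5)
You correctly ruled out the restriction-of-scalars reduction to Theorem~\ref{tdvz} and identified that the Dvornicich--Zannier argument must be run directly over~$K$ with $N\asymp B^d$; this is indeed what the paper does. However, the mechanism you give for producing the exponential lower bound is wrong, and this is a genuine gap.

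You claim that ``a number field of degree~$D$ is ramified over at most $O(D\log D)$ rational primes (a consequence of the discriminant bound)'' and propose to conclude by pigeonhole: if the compositum has degree~$D$ but $\gg B^d/\log B$ primes ramify in it, then $D$ must be exponentially large. Neither half survives scrutiny. First, there is no upper bound on the number of ramified primes of an extension purely in terms of its degree: $\Q(\sqrt{p_1\cdots p_k})$ has degree~$2$ and is ramified at $k$ (or $k+1$) primes, so the claimed $O(D\log D)$ bound is simply false (Minkowski gives a \emph{lower} bound on $\log|d_L|$ in terms of $D$, not an upper bound). Second, even if one had such a bound, equating it with $\gg B^d/\log B$ would only yield a \emph{polynomial} lower bound $D\gg B^d/(\log B)^2$; one cannot extract the target $e^{cB^d/\log B}$ from an inequality of the shape $D\log D\gg B^d/\log B$.

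The actual engine of Dvornicich--Zannier, and of the paper's Section~\ref{sardz}, is multiplicative rather than a pigeonhole on the discriminant. One fixes an $\N$-ordering ``$\prec$'' of $\OO_K$ that is increasing in size, and calls a place $v$ \emph{primitive} for $\tau$ if $v$ ramifies in $K\bigl(t^{-1}(\tau)\bigr)/K$ but ramifies in none of the preceding fields $K\bigl(t^{-1}(\tau')\bigr)/K$, $\tau'\prec\tau$. If $\tau$ admits a primitive $v$, then $K\bigl(t^{-1}(\tau)\bigr)$ is \emph{not contained} in the compositum of the preceding fields (a prime ramifying in it cannot ramify in a subfield of that compositum); if moreover $t^{-1}(\tau)$ is $K$-irreducible, then $K\bigl(t^{-1}(\tau)\bigr)$ is the Galois closure of $K(P_\tau)$, so $K(P_\tau)$ is also not contained in the preceding compositum, and adjoining it at least doubles the degree. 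Producing $\gg B^d/\log B$ such $\tau$ with $\size\tau\le B$ thus gives degree $\ge 2^{cB^d/\log B}$ directly. The rest of the paper's proof (Propositions~\ref{pprim} and~\ref{pfew}, fed by the arithmetic/geometric ramification comparison of Section~\ref{sram}, Chebotarev, and Corollary~\ref{chilb}) is devoted to producing that many ``primitive'' $\tau$'s; your sketch does not address this, and the substitution ``$\pi_K$ for $\pi$, $\OO_K$ for $\Z$'' is necessary but far from sufficient. Without the doubling mechanism, the exponential bound does not emerge.
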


Here $\Height(\cdot)$ is the standard absolute height function, see Section~\ref{snota}. 

\bigskip

Again, we have the following immediate consequence. 

\begin{corollary}
\label{cmain}
In the set-up of Theorem~\ref{tmain} there exist   ${c=c(K,\bfg,n)>0}$ and ${B_0=B_0(K,X,t)>1}$ such that the following holds. Pick ${P_\tau \in t^{-1}(\tau)}$ for every ${\tau \in \OO_K}$. Then for every ${B\ge B_0}$, among the number fields 
$$
K(P_\tau) \qquad (\tau\in \OO_K,\quad \Height(\tau)\le B)
$$ 
there are  at least $cB^d/\log B$ distinct fields.  
\end{corollary}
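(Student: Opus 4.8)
The plan is to derive Corollary~\ref{cmain} directly from Theorem~\ref{tmain} by an elementary counting argument on compositum degrees. First I would record the trivial bound ${[K(P_\tau):K]\le n}$, valid for every ${\tau\in\OO_K}$: indeed $P_\tau$ lies in the fiber $t^{-1}(\tau)$, which has at most~$n$ points, and all $K$-conjugates of $P_\tau$ lie in that same fiber, so the orbit of $P_\tau$ under $G_{\bar K/K}$ has size at most~$n$.

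Next, fix ${B\ge B_0}$, with $B_0=B_0(K,X,t)$ as in Theorem~\ref{tmain}, and suppose that among the fields $K(P_\tau)$ with ${\tau\in\OO_K}$, ${\Height(\tau)\le B}$, there are exactly~$m$ distinct ones, say $L_1,\dots,L_m$. Then the field occurring in Theorem~\ref{tmain},
$$
L := K\bigl(P_\tau:\ \tau\in\OO_K,\ \Height(\tau)\le B\bigr),
$$
is precisely the compositum $L_1\cdots L_m$. The standard submultiplicativity of degrees in a compositum then gives ${[L:K]\le\prod_{i=1}^m[L_i:K]\le n^m}$.

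Finally I would combine this with the lower bound ${[L:K]\ge e^{cB^d/\log B}}$ supplied by Theorem~\ref{tmain}. Taking logarithms yields ${m\log n\ge cB^d/\log B}$, hence ${m\ge (c/\log n)\,B^d/\log B}$. Since ${n\ge 2}$ we have ${\log n>0}$, so ${c/\log n}$ is a positive real depending only on $K$, $\bfg$ and~$n$; replacing $c$ by this new constant and keeping $B_0$ unchanged proves the corollary.

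There is no genuine obstacle here: the whole content of the statement already sits inside Theorem~\ref{tmain}, and the two inequalities invoked—${[K(P_\tau):K]\le n}$ and the bound on the degree of a compositum—are completely routine. The only point to keep in mind is that the hypothesis ${n\ge 2}$ is exactly what makes the division by $\log n$ legitimate.
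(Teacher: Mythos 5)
Your proof is correct and is exactly the standard argument the paper has in mind when it calls Corollary~\ref{cmain} an ``immediate consequence'' of Theorem~\ref{tmain} without spelling out the details: bound each ${[K(P_\tau):K]}$ by~$n$ (since the $G_K$-orbit of $P_\tau$ lies in the fiber ${t^{-1}(\tau)}$, which has at most $n$ points), use submultiplicativity of degrees in the compositum, and divide by $\log n$. No issues.
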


In Sections~\ref{snota}--\ref{sram} we obtain various auxiliary results.   Theorem~\ref{tmain} is proved in Section~\ref{sardz}. In the Appendix, Jean Gillibert suggests a more canonical approach to the results of Section~\ref{sram}.

\paragraph{Acknowledgments}
A substantial part of this article was written during my stay the Max-Planck-Institut für Mathematik in Bonn. I thank this institute for the financial support and stimulating working conditions.

This article belongs to a joint project with Jean Gillibert. I  thank him for allowing me to publish this part of this project as a separate article, for adding a beautiful appendix, and for many stimulating discussions. 

\section{Preliminaries}
\label{snota}

\subsection{Number Fields, Heights and Sizes}

Given a number field~$K$, we denote by $M_K$, $M_K^\infty$ and $M_K^0$  the sets of all, infinite and finite places of~$K$, respectively. To every place ${v\in M_K}$ we associate the absolute value ${|\cdot|_v}$ normalized to extend a standard absolute value on~$\Q$: if ${v\mid \infty}$ then ${|2017|_v=2017}$, and if ${v\mid p<\infty}$ then ${|p|_v=p^{-1}}$. We denote by~$K_v$  the $v$-adic completion of the field~$K$. 

We also associate, to every finite place ${v\in M_K^0}$, the additive valuation $v(\cdot)$ normalized so that ${v(K^\times)=\Z}$; equivalently,   ${v(\norm v)=[K_v:\Q_v]}$, where $\norm v$ denotes the absolute norm of the prime ideal of~$v$.  By convention, we set ${\norm v=1}$  for ${v\in M_K^\infty}$.

We denote by $\Height(\alpha)$ the multiplicative absolute height of an algebraic number~$\alpha$: if~$K$ is a number field containing~$\alpha$ then
$$
\Height(\alpha) = \prod_{v\in M_K}\max\{1,|\alpha|_v \}^{[K_v:\Q_v]/[K:\Q]}. 
$$
We will also widely use the ``old-fashioned'' notion of the \textsl{size} $\size\alpha$ of an algebraic number~$\alpha$. If~$\alpha$ belongs to a number field~$K$  then we set
$$
\size\alpha=\max\{|\alpha|_v: v\in M_K^\infty\}. 
$$
Together with this ``upper size'' one can also define the ``lower size'' 
$$
\lsize\alpha=\min\{|\alpha|_v: v\in M_K^\infty\}. 
$$
We have clearly
\begin{align}
&\lsize\alpha=\size{\alpha^{-1}}^{-1}&&(\alpha \in  K\smallsetminus\{0\}),\nonumber\\
\label{esizeheight}
&\lsize\alpha\le \Height(\alpha) \le \size\alpha&&(\alpha \in  \OO_K\smallsetminus\{0\}),\\
&\size{\alpha+\beta}\le \size\alpha+\size\beta, \qquad \size{\alpha\beta}\le \size\alpha\cdot\size\beta && (\alpha, \beta\in K).\nonumber
\end{align}

\subsection{Algebraic Curves}
\label{sscurves}
Recall that, unless the contrary is stated explicitly, everywhere in this note ``curve'' means ``smooth geometrically irreducible projective algebraic curve''.

Let~$X$ be a curve over a field~$K$ of characteristic~$0$. We fix an algebraic closure~$\bar K$ of~$K$ and we denote by $G_K$ the absolute Galois group of~$K$, that is, the Galois group of $\bar K/K$. 

We call a \textsl{$K$-place} of the field $K(X)$ a class of non-trivial valuations on $K(X)$ whose restriction to~$K$ is trivial. 
We have a bijective correspondence given by 
$$
P\leftrightarrow \text{the class of $v_P(\cdot)$}
$$  
between the set $X(\bar K)$ of $\bar K$-points of~$X$, and the set of $\bar K$-places of $\bar K(X)$. (Here $v_P(\cdot)$ is, of course, the order of vanishing at~$P$.) In the sequel we tacitly identify the two sets and may speak on a $\bar K$-point~$P$ on~$X$ as a $\bar K$-place~$P$ of $\bar K(X)$, and vice versa. 

More generally, there is a bijection between the sets of places of $K(X)$ and the set $X(\bar K)/G_K$ of $G_K$-orbits of $\bar K$-points, and we again identify the two sets.  If ${P\in X(\bar K)}$ then the residue field of the place $P^{G_K}$ is isomorphic to $K(P)$.

Let ${t\in K(X)}$ be a $K$-rational function.  
For any point~$P$ of~$X$ there is a well-defined ``value'' ${t(P)\in K(P)\cup\{\infty\}}$. It may be defined as the only element~$\tau$ of  ${K(P)\cup\{\infty\}}$ such that ${v_P(t-\tau)>0}$.  

\textsl{Here and everywhere else throughout the article we use the standard convention ${t-\infty =t^{-1}}$. }

Now let ${t,u\in K(X)}$ be non-constant $K$-rational functions. We say that a point ${P\in X(\bar K)}$ is \textsl{$(t,u)$-regular} if the following two conditions are satisfied. 

\renewcommand{\theenumi}{R\arabic{enumi}}

\begin{enumerate}
\item
\label{idist}
For any point ${P'\ne P}$  we have ${(t(P'),u(P'))\ne (t(P),u(P))}$. 

\item
\label{iord}
We have 
$$
\min \{v_P(t-t(P)),v_P(u-u(P))\}=1.
$$
\end{enumerate}
If one of these conditions is not satisfied then we call~$P$ a $(t,u)$-singular point. 

For instance, let~$z$ be the rational function on~$\P^1$ such that ${z(P)=x/y}$ for ${P=(x:y)}$ with ${y\ne 0}$, and ${z(1:0)=\infty}$. Then for ${t=z^2}$ and ${u=z^3}$  the points ${P_0=(0:1)}$ and ${P_\infty =(1:0)}$ are $(t,u)$ singular, because 
$$
\min\{v_{P_0}(t), v_{P_0}(u)\}= \min\{v_{P_\infty}(t^{-1}), v_{P_\infty}(u^{-1})\}=2,
$$
and all the other points on~$\P^1$ are $(t,u)$-regular. And for ${t=z(z-1)}$ and ${u=z^2(z-1)}$ the points~$P_0$, ${P_1=(1:1)}$ and~$P_\infty$ are $(t,u)$-singular, because 
$$
t(P_0)=t(P_1), \quad u(P_0)=u(P_1), \quad \min\{v_{P_\infty}(t^{-1}), v_{P_\infty}(u^{-1})\}=2,
$$
and all the other points  are $(t,u)$-regular.

\renewcommand{\theenumi}{\arabic{enumi}}

The following properties will be used in the article  without special reference.

\begin{proposition}
\label{pnsing}
Let~$X$ be a curve over a field~$K$ of characteristic~$0$ and ${t,u\in K(X)}$ non-constant rational function on~$X$. 

\begin{enumerate}

\item
\label{ifielfun}
If there is at least one $(t,u)$-regular $\bar K$-point then  ${K(X)=K(t,u)}$.

\item[]
From now on assume that ${K(X)=K(t,u)}$.

\item
\label{ifieldef}
If ${P\in X(\bar K)}$ is $(t,u)$-regular then  ${K(P)=K(t(P),u(P))}$

\item
\label{insig}
Assume that ${K(X)=K(t,u)}$ and let ${F(T,U)\in \bar K[T,U]}$ be a $\bar K$-irreducible polynomial satisfying ${F(t,u)=0}$. Then for a point ${P\in X(\bar K)}$ with 
$$
t(P)=\tau\ne \infty, \qquad u(P)=\omega\ne\infty 
$$
the following properties are equivalent.

\begin{itemize}
\item
The point~$P$ is $(t,u)$-singular.

\item
We have ${F'_T(\tau,\omega)=F'_U(\tau,\omega)=0}$. 

\end{itemize}

\item
\label{ifin}
There exist at most finitely many $(t,u)$-singular points ${P\in X(\bar K)}$. 
\end{enumerate}
\end{proposition}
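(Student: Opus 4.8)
The plan is to translate the pair~$(t,u)$ into the geometry of the plane curve it cuts out, and then read everything off the normalisation map. Since~$t$ and~$u$ are non-constant elements of the transcendence-degree-one field~$\bar K(X)$, they are algebraically dependent over~$\bar K$, so there is a $\bar K$-irreducible $F\in\bar K[T,U]$, unique up to a nonzero scalar, with $F(t,u)=0$; it involves both variables, and $\calC:=\{F=0\}\subset\mathbb{A}^2$ is an irreducible affine plane curve with $\bar K(\calC)=\bar K(t,u)$ (moreover~$F$, hence~$\calC$, may be taken defined over~$K$, the image of $(t,u)\colon X\to\mathbb{A}^2$ being a geometrically irreducible $K$-curve). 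Let $\nu\colon X\to\overline{\calC}$ be the normalisation of the projective model of~$\calC$, a morphism over~$K$; once $K(X)=K(t,u)$ is known we have $\bar K(X)=\bar K(\overline{\calC})$, so $\nu$ is birational. Throughout I will pass freely between~$K$ and~$\bar K$: the degree of a dominant morphism of geometrically irreducible curves is unchanged under extension of scalars, and a single $\bar K$-point of a $K$-variety fixed by $G_{K'}$ for a finite extension $K'/K$ is a $K'$-point, so fields of definition can be computed over~$\bar K$.

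I would prove part~\ref{insig} first, since it is the technical core; here $\tau,\omega\ne\infty$, so~$P$ lies in the affine part and $\nu(P)=(\tau,\omega)\in\calC$. The claim is that
$$
P \text{ is } (t,u)\text{-regular} \iff (\tau,\omega) \text{ is a smooth point of } \calC,
$$
which by the Jacobian criterion (valid as~$F$ is reduced) means exactly $(F'_T,F'_U)(\tau,\omega)\ne(0,0)$; negating it gives part~\ref{insig}. If $(\tau,\omega)$ is smooth then $A:=\OO_{\calC,(\tau,\omega)}$ is a DVR, hence integrally closed, so~$A$ coincides with its integral closure in~$\bar K(X)$; therefore $\nu^{-1}(\tau,\omega)$ is the single point~$P$ with $\OO_{X,P}=A$, and the maximal ideal $\mathfrak m_A=(t-\tau,u-\omega)$ being principal, one of $t-\tau,u-\omega$ is a uniformiser at~$P$; thus~\ref{idist} and~\ref{iord} hold. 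Conversely, assume~$P$ is $(t,u)$-regular. By~\ref{idist}, $\nu^{-1}(\tau,\omega)=\{P\}$, so $B:=\OO_{X,P}$ is precisely the integral closure of~$A$ in~$\bar K(X)$, a finite $A$-module; by~\ref{iord}, $\mathfrak m_AB=\mathfrak m_B$, so $B/\mathfrak m_AB=\bar K$ is generated over~$A$ by the image of~$1$, and Nakayama's lemma gives $B=A$. Hence~$A$ is a DVR and $(\tau,\omega)$ is smooth.

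The remaining parts are then short. For part~\ref{ifielfun}: let $\pi\colon X\to Y$ be the map to the smooth projective model~$Y$ of~$\bar K(t,u)$, of degree $m=[\bar K(X):\bar K(t,u)]$, and let~$P$ be a $(t,u)$-regular point with $Q:=\pi(P)$. Since~$t$ and~$u$ descend to~$Y$, every point of $\pi^{-1}(Q)$ has the same $(t,u)$-value as~$P$, so~\ref{idist} forces $\pi^{-1}(Q)=\{P\}$ and hence $e_P=m$; if $m\ge2$ this contradicts~\ref{iord}, because then $v_P(t-t(P))=e_P\,v_Q(t-t(P))\ge2$ and likewise for~$u$ (with the convention $t-\infty=t^{-1}$ when a value is~$\infty$). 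So $m=1$, i.e.\ $\bar K(X)=\bar K(t,u)$, whence $K(X)=K(t,u)$. For part~\ref{ifieldef}: replacing~$t$ or~$u$ by its inverse we may assume $\tau,\omega$ finite (this affects neither $(t,u)$-regularity nor $K(t(P),u(P))$); by part~\ref{insig}, $(\tau,\omega)$ is a smooth point of~$\calC$, so $\nu^{-1}(\tau,\omega)=\{P\}$, and as~$\nu$ is $G_K$-equivariant the group $G_{K(\tau,\omega)}$ fixes~$P$; hence $K(P)\subseteq K(\tau,\omega)$, the reverse inclusion being clear. Finally, for part~\ref{ifin}: the poles of~$t$ and of~$u$ form finite sets, while every $(t,u)$-singular point with finite $(t,u)$-value maps under the finite morphism~$\nu$ to a singular point of the irreducible curve~$\calC$, of which there are finitely many; together this accounts for all singular points.

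The step I expect to be the main obstacle is the ``only if'' direction of part~\ref{insig}: one must argue carefully that~\ref{idist} pins down $\OO_{X,P}$ as the \emph{entire} integral closure of~$\OO_{\calC,(\tau,\omega)}$ rather than merely one of its localisations, which rests on that integral closure being semilocal with maximal ideals in bijection with $\nu^{-1}(\tau,\omega)$; given this, the Nakayama argument closes the gap cleanly. A secondary point needing care is the uniform treatment of the value~$\infty$ in parts~\ref{ifielfun}, \ref{ifieldef} and~\ref{ifin} via the convention $t-\infty=t^{-1}$.
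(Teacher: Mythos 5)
Your proof is correct, and for the key item~\ref{insig} it takes a genuinely different (and more self-contained) route than the paper: where the paper simply cites Theorem~5.8 of Chapter~IV of Walker's \emph{Algebraic Curves}, you establish the equivalence ``$P$ is $(t,u)$-regular $\iff (\tau,\omega)$ is a smooth point of $\calC$'' directly through the normalisation map, via the DVR characterisation in one direction and Nakayama's lemma applied to the finite $A$-module $B/A$ in the other. That Nakayama step is exactly right, and you correctly identified it as the delicate point: one needs that R\ref{idist} forces $\OO_{X,P}$ to be the \emph{whole} integral closure of the local ring $A$, which holds because the integral closure is semilocal with maximal ideals indexed by the fibre $\nu^{-1}(\tau,\omega)$, which R\ref{idist} reduces to a singleton.

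For item~\ref{ifielfun} your argument is essentially the paper's: the paper phrases the dichotomy in the language of valuations and places (either $P$ is totally ramified over $\bar K(t,u)$, violating R\ref{iord}, or there is another place $P'$ with the same restriction, violating R\ref{idist}), whereas you phrase it through the map $\pi\colon X\to Y$ and the ramification-index identity $\sum e_{P'}=m$ over $\bar K$; these are the same computation. Item~\ref{ifin} is deduced from item~\ref{insig} in both proofs. The one place where you are more roundabout than necessary is item~\ref{ifieldef}: the paper argues directly with Galois theory --- if $K(t(P),u(P))\subsetneq K(P)$, pick $\sigma\in G_{K(t(P),u(P))}\smallsetminus G_{K(P)}$, then $P^\sigma\neq P$ has the same $(t,u)$-value, contradicting R\ref{idist} --- which requires no reduction to finite values, no appeal to item~\ref{insig}, and no descent of $F$ and $\nu$ to $K$. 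Your version works, but it imports the whole machinery of item~\ref{insig} and the $K$-rationality of $\nu$ for what is, at bottom, a two-line Galois observation. Nonetheless, everything you wrote is sound, and the self-contained treatment of item~\ref{insig} is arguably an improvement over the bare citation in the paper.
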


All this is well-known, but we include the proof for completeness. 

\paragraph{Proof of Proposition~\ref{pnsing}}
Since~$X$ is geometrically irreducible, the constant subfield of $K(X)$ is~$K$. Hence in item~\ref{ifielfun} it suffices to show that ${\bar K(X)=\bar K(t,u)}$ if there is at least one $(u,t)$-non-singular point ${P\in X(\bar K)}$.

Thus, assume that ${\bar K(t,u)}$ is a proper subfield of ${\bar K(X)}$, of degree ${m>1}$. Then there are two possibilities for our  point~$P$: 
\begin{itemize}
\item
the place~$P$ of $\bar K(X)$ is totally ramified over ${\bar K(t,u)}$, in which case both ${v_P(t-t(P))}$ and ${v_P(u-u(P))}$ are divisible by~$m$, contradicting condition~\ref{iord};

\item
the restriction of~$P$ to the field $\bar K(t,u)$ coincides with the restriction of a different place~$P'$, in which case 
${t(P)=t(P')}$ and ${u(P)=u(P')}$, contradicting condition~\ref{idist}.

\end{itemize}
This proves item~\ref{ifielfun}.

{\sloppy

Now assume that ${K(t(P),u(P))}$ is a proper subfield of $K(P)$. Pick ${\sigma \in G_{K(t(P),u(P))}\smallsetminus G_{K(P)}}$. Then we have ${P\ne P^\sigma}$, but ${t(P)=t(P^\sigma)}$ and ${u(P)=u(P^\sigma)}$, contradicting condition~\ref{idist}. This proves item~\ref{ifieldef}.

}

Item~\ref{insig} can be found in any ``old-fashioned'' course of the theory of plane algebraic curves;  for instance,  see 
Theorem~5.8 in \cite[Chapter~IV]{Wa50}. Finally, item~\ref{ifin} follows from item~\ref{insig}. \qed

\section{Lemmas on Ideals}
\label{ssideals}

In this section~$K$ is a number field of degree ${d=[K:\Q]}$ and ${\norm=\norm_{K/\Q}}$ is the $K/\Q$-norm. The index $K/\Q$ will be omitted when this does not cause confusion.

\begin{lemma}[a ``reduced'' generator of a principal ideal]
\label{lconv}
There exists a positive number~$\kappa$ (depending only on~$K$) such that the following holds. Let~$\gera$ be a principal 
fractional ideal of~$K$. Then~$\gera$ has a generator~$\alpha$ satisfying
\begin{equation}
\label{econv}
\kappa^{-1}(\norm\gera)^{1/d}\le \lsize\alpha\le \size\alpha \le \kappa(\norm\gera)^{1/d}.
\end{equation} 
\end{lemma}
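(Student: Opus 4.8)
The plan is to run the classical geometry-of-numbers argument: take an arbitrary generator of~$\gera$ and multiply it by a well-chosen unit, the correction being bounded in terms of the covering radius of the logarithmic unit lattice, which depends only on~$K$.

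Write $S=M_K^\infty$ for the set of infinite places and introduce the logarithmic map
$$
L\colon K^\times\longrightarrow \R^S,\qquad L(\alpha)=\bigl([K_v:\Q_v]\log|\alpha|_v\bigr)_{v\in S}.
$$
The two structural inputs are: (i) by the product formula, any generator~$\beta$ of~$\gera$ satisfies $\sum_{v\in S}L(\beta)_v=\log\norm\gera$, so $L(\beta)$ lies in the affine hyperplane $\mathcal{H}_\gera=\{x\in\R^S:\sum_v x_v=\log\norm\gera\}$; (ii) by Dirichlet's Unit Theorem, $\Lambda:=L(\OO_K^\times)$ is a full lattice in the linear hyperplane $H=\{x\in\R^S:\sum_v x_v=0\}$, so there is a finite number $\rho=\rho(K)$ such that every point of~$H$ lies within $\ell^\infty$-distance~$\rho$ of~$\Lambda$.

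Given these, the argument goes as follows. Fix~$\gera$ and a generator~$\beta$, put $c=d^{-1}\log\norm\gera$ and $\mathbf{w}=([K_v:\Q_v])_{v\in S}$, and note that $\sum_{v\in S}w_v=d$, whence $c\mathbf{w}\in\mathcal{H}_\gera$ and therefore $c\mathbf{w}-L(\beta)\in H$. Choose $\eps\in\OO_K^\times$ with $\|L(\eps)-(c\mathbf{w}-L(\beta))\|_\infty\le\rho$ and set $\alpha=\eps\beta$, which is again a generator of~$\gera$. Then $\|L(\alpha)-c\mathbf{w}\|_\infty\le\rho$, hence $|\log|\alpha|_v-c|\le\rho/[K_v:\Q_v]\le\rho$ for every $v\in S$; exponentiating (and using $e^{c}=(\norm\gera)^{1/d}$) gives $e^{-\rho}(\norm\gera)^{1/d}\le|\alpha|_v\le e^{\rho}(\norm\gera)^{1/d}$ for all $v\in S$, which is exactly \eqref{econv} with $\kappa=e^{\rho}$.

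I do not expect any genuine obstacle here: the whole content is that the logarithmic unit lattice is a \emph{full} lattice in the trace-zero hyperplane (Dirichlet's theorem), which is what makes~$\rho$ finite and dependent on~$K$ alone; the rest is bookkeeping with the normalizations of the absolute values $|\cdot|_v$ and the local degrees $[K_v:\Q_v]$. If one prefers to avoid the phrase ``covering radius'', the same conclusion follows by choosing~$\eps$ so that $L(\eps)+L(\beta)-c\mathbf{w}$ lands in a fixed bounded fundamental domain for~$\Lambda$ in~$H$.
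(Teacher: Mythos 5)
Your proof is correct and follows essentially the same route as the paper's: reduce a generator modulo the logarithmic unit lattice (Dirichlet) so that its log-vector is within a bounded $\ell^\infty$-distance of the scalar multiple of the all-ones (resp. local-degree) vector determined by $\norm\gera$, then exponentiate. The only cosmetic difference is that you weight the logarithmic embedding by the local degrees $[K_v:\Q_v]$ and use the plain coordinate sum, whereas the paper uses the unweighted $\Log$ together with the weighted functional $\varsigma$; these are conjugate by a diagonal linear map, so the arguments are the same.
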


\begin{proof}
This property is well-known and widely used in the Diophantine Analysis, but we include a quick proof for the reader's convenience. To simplify the notation, we denote by~$S$  the set of infinite places $M_K^\infty$.  Let ${\Log:K^\times \to \R^S}$ be the  ``logarithmic map'' ${\alpha\mapsto (\log|\alpha|_v)_{v\in S}}$ and let  ${\varsigma:\R^S\to\R}$ be the linear functional ${\bfx=(x_v)_{v\in S}\mapsto\sum_{v\in S}(d_v/d)x_v}$, where ${d_v=[K_v:\Q_v]}$ is the local degree of~$v$ (equal to~$1$ or~$2$ depending on whether~$v$ is real or complex). Then for ${\alpha\in K^\times}$ we have ${\varsigma(\Log\alpha)=d^{-1}\log|\norm\alpha|}$. In addition to this, we denote by~$\one$ the vector  ${(1)_{v\in S}\in \R^S}$ (every component is~$1$); note that ${\varsigma(\one) =1}$. 

According to the Dirichlet Unit Theorem, the image $\Log\OO_K^\times$ of the unit group forms a lattice in $\ker\varsigma$. Hence 
there exists ${\lambda>0}$ such that for any ${\bfx\in \ker \varsigma}$ there exits ${\bfx'\in \ker\varsigma}$ satisfying ${\bfx\equiv \bfx'\bmod \Log\OO_K^\times}$ and ${\|\bfx'\|_\infty\le\lambda}$, where ${\|\cdot\|_\infty}$ stands for the sup-norm. More generally, for an arbitrary ${\bfx\in \R^S}$, we find, by applying the previous sentence to the vector ${\bfx-\nu(\bfx)\one\in \ker\varsigma}$, a vector ${\bfx'\in \R^S}$ satisfying  ${\bfx\equiv \bfx'\bmod \Log\OO_K^\times}$ and ${\|\bfx'-\nu(\bfx)\one\|_\infty\le\lambda}$. In particular, for  ${\beta \in K^\times}$ we can find ${\alpha \in K^\times}$ such that ${\beta/\alpha\in \OO_K^\times}$ and 
$$
e^{-\lambda}|\norm\beta|^{1/d}\le |\alpha|_v\le e^{\lambda}|\norm\beta|^{1/d}
$$
for all ${v\in S}$. Taking~$\beta$ as a generator of the principal ideal~$\gera$, we find thereby another generator~$\alpha$ satisfying~\eqref{econv} with ${\kappa=e^\lambda}$. \qed
\end{proof}

\begin{lemma}[a ``reduced'' $\Z$-basis of an ideal]
\label{lbas}
There exists a positive number~$\kappa$ (depending only on~$K$) such that the following holds. Let~$\gera$ be a  
fractional ideal of~$K$. Then~$\gera$ has a $\Z$-basis ${\alpha_1, \ldots, \alpha_d}$ satisfying
\begin{equation}
\label{ebas}
\kappa^{-1}(\norm\gera)^{1/d}\le \lsize{\alpha_i}\le \size{\alpha_i} \le \kappa(\norm\gera)^{1/d}. \qquad (i=1, \ldots, d).
\end{equation} 
\end{lemma}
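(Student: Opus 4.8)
I would deduce Lemma~\ref{lbas} from Lemma~\ref{lconv} by the standard trick of first reducing to the case where $\gera$ is an \emph{integral} ideal, then comparing $\gera$ with a principal ideal contained in it. Concretely: there is a finite list $\gerb_1, \ldots, \gerb_h$ of integral ideals representing the ideal classes of $K$, and one may fix them once and for all. Given a fractional ideal $\gera$, write $\gera \gerb_j = (\beta)$ for the appropriate $j$ (so that $\gera\gerb_j$ is principal and integral), and note $\norm(\gera\gerb_j) \asymp_K \norm\gera$ since the $\gerb_j$ are fixed. Thus it is enough to treat an integral ideal $\gera$, at the cost of an absolute constant; moreover the $\Z$-basis we seek for $\gera$ can be recovered from one for $\gera\gerb_j$, again at the cost of a fixed constant, because $\gera \supseteq \gera\gerb_j \supseteq c\,\gera$ for a suitable nonzero rational integer $c$ (e.g. $c$ a multiple of all $\norm\gerb_j$). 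So from now on $\gera$ is integral.

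For integral $\gera$, I would use Lemma~\ref{lconv} applied to the principal ideal $(\alpha_1)$ where $\alpha_1$ is \emph{any} nonzero element of $\gera$ of smallest size — by Minkowski's convex body theorem $\gera$ contains a nonzero $\alpha_1$ with $\size{\alpha_1} \le \gamma_K (\norm\gera)^{1/d}$ for an explicit $\gamma_K$. Then $(\alpha_1) \subseteq \gera$ with $\norm(\alpha_1)/\norm\gera = [\gera : (\alpha_1)] =: m$, and by Lemma~\ref{lconv} the principal ideal $(\alpha_1)$ has a generator $\alpha_1'$ of balanced size $\asymp (\norm(\alpha_1))^{1/d} = (m\norm\gera)^{1/d}$. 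The point of passing to $\alpha_1'$ is that its \emph{lower} size is also $\asymp (m\norm\gera)^{1/d}$, which is what condition~\eqref{ebas} demands; but $m$ is a priori unbounded, so a single element is not enough. Instead I would build the whole basis at once.

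Here is the cleaner route I would actually carry out. By Minkowski again, $\gera$ has a $\Z$-basis $\beta_1, \ldots, \beta_d$ with $\prod_i \size{\beta_i} \asymp_K \norm\gera$ and each $\size{\beta_i} \le \kappa_1 (\norm\gera)^{1/d}$ (a reduced basis of the lattice $\gera \subset K\otimes\R \cong \R^d$ under the Minkowski embedding, using that $\mathrm{covol}(\gera) \asymp_K \norm\gera$); this gives the upper bound in~\eqref{ebas} for every $i$ and shows none of the $\size{\beta_i}$ can be too small on \emph{average}. To also force each \emph{lower} size to be $\gtrsim (\norm\gera)^{1/d}$, multiply each $\beta_i$ by a unit: by the argument inside the proof of Lemma~\ref{lconv} (the Dirichlet-unit reduction via the map $\Log$ and the functional $\varsigma$), for each $i$ there is a unit $\eta_i$ with $|\beta_i\eta_i|_v \asymp_K |\norm\beta_i|^{1/d}$ for all $v\in M_K^\infty$ simultaneously; set $\alpha_i = \beta_i\eta_i$. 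Then $\alpha_1, \ldots, \alpha_d$ is still a $\Z$-basis of $\gera$ (units are invertible in $\OO_K$, so the transition matrix is still in $\mathrm{GL}_d(\Z)$... wait — multiplying basis elements by units is \emph{not} a $\Z$-linear operation on the basis vectors) —

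**Correction to the last step.** Multiplying individual $\beta_i$ by different units does \emph{not} preserve the property of being a $\Z$-basis, so the naive fix fails; this is exactly the main obstacle. The right remedy: keep the Minkowski-reduced basis $\beta_1, \ldots, \beta_d$ only for the \emph{upper} bound, and get the lower bound differently. A nonzero $\alpha \in \gera$ has $\lsize{\alpha} = \size{\alpha^{-1}}^{-1}$ and $\size{\alpha^{-1}} \ge |\norm\alpha|^{-1/d} \cdot (\text{something} \ge 1$ only if the conjugates are unbalanced$)$; in general $\lsize\alpha \le |\norm\alpha|^{1/d}$, and since $|\norm\alpha| \ge \norm\gera$ for nonzero $\alpha\in\gera$ integral, the lower size is bounded \emph{below} by $(\norm\gera)^{1/d}$ \emph{provided} the conjugates of $\alpha$ are balanced — which is not automatic. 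So the honest statement is that one cannot simultaneously diagonalize; instead I would argue: the lattice $\gera$, having covolume $\asymp_K\norm\gera$, has \emph{all} its successive minima (in the sup-norm on $\R^d \cong \prod_v K_v$) of order $(\norm\gera)^{1/d}$, because $\lambda_1 \cdots \lambda_d \asymp \mathrm{covol} \asymp \norm\gera$ while each $\lambda_i \ge \lambda_1 \ge $ (the sup-norm of \emph{any} nonzero vector, which is $\ge |\norm\cdot|^{1/d} \ge (\norm\gera)^{1/d}$ since for $\xi = (\xi_v)$ in the lattice, $\prod|\xi_v|^{d_v/d} = |\norm|^{1/d} \ge (\norm\gera)^{1/d}$ and the geometric mean is at most the max). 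Hence $\lambda_1 \asymp \cdots \asymp \lambda_d \asymp (\norm\gera)^{1/d}$. A basis realizing the successive minima (or a reduced basis in Minkowski's sense) then has every $\size{\alpha_i} \asymp (\norm\gera)^{1/d}$; and for the lower size, $\lsize{\alpha_i} \ge (\norm\gera)^{1/d}/\kappa'$ follows because $(\norm\gera)^{1/d} \le |\norm\alpha_i|^{1/d} = \bigl(\prod_v |\alpha_i|_v^{d_v}\bigr)^{1/d} \le \size{\alpha_i}^{(d-d_{v_0})/d}\lsize{\alpha_i}^{d_{v_0}/d}$ rearranges (using the now-established upper bound $\size{\alpha_i}\le\kappa(\norm\gera)^{1/d}$) into $\lsize{\alpha_i} \ge \kappa^{-d/d_{v_0}}(\norm\gera)^{1/d}$. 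The hard part, then, is not the arithmetic but getting the geometry of numbers bookkeeping right — in particular that a single $\Z$-basis can be chosen to control upper \emph{and} lower sizes at once, which one gets from the balancedness of successive minima rather than from unit-scaling coordinate by coordinate. After that, unwinding the reduction from fractional to integral ideals made in the first paragraph completes the proof, with $\kappa = \kappa(K)$ absorbing all the fixed constants.
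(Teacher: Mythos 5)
Your final argument (after the two self-corrections) is correct, but it is a genuinely different proof from the paper's. The paper's argument is very short: fix once and for all, in each ideal class, a representative $\gerb$ of bounded norm with a fixed $\Z$-basis $\beta_1,\ldots,\beta_d$ of bounded sizes; then for $\gerb$ in the class of~$\gera$, Lemma~\ref{lconv} gives a balanced generator~$\gamma$ of the principal ideal $\gera\gerb^{-1}$, and ${\alpha_i=\gamma\beta_i}$ is the desired basis of ${\gamma\gerb=\gera}$ --- a \emph{single} scalar~$\gamma$ preserves the $\Z$-basis property, which is precisely the obstruction you ran into with per-coordinate unit scaling. Your route is instead pure geometry of numbers: ${\lambda_1\ge(\norm\gera)^{1/d}}$ because the sup of the archimedean absolute values dominates the weighted geometric mean ${|\norm\alpha|^{1/d}\ge(\norm\gera)^{1/d}}$ for nonzero ${\alpha\in\gera}$, while Minkowski's second theorem gives ${\lambda_1\cdots\lambda_d\asymp\mathrm{covol}(\gera)\asymp\norm\gera}$, so all successive minima are ${\asymp(\norm\gera)^{1/d}}$; a Minkowski- or LLL-reduced basis then has ${\size{\alpha_i}\asymp\lambda_i}$, and the lower bound on $\lsize{\alpha_i}$ falls out of the upper bound on $\size{\alpha_i}$ together with ${|\norm{\alpha_i}|\ge\norm\gera}$, exactly as you compute. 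Two small points: (a) your first-paragraph reduction to integral~$\gera$ is both unnecessary (the inequalities ${|\norm\alpha|\ge\norm\gera}$ for ${\alpha\in\gera}$ and ${\mathrm{covol}(\gera)\asymp\norm\gera}$ hold for any nonzero fractional ideal, so the main argument applies directly) and, as written, incomplete (a controlled basis of a bounded-index sublattice does not immediately hand you a controlled basis of the ambient lattice --- one would have to pass through successive minima anyway); (b) your route avoids finiteness of the class number at the price of Minkowski's second theorem and reduction theory of lattice bases, whereas the paper's route is shorter, reuses Lemma~\ref{lconv}, and needs only the finiteness of the class group plus the Dirichlet unit theorem already invoked there.
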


\begin{proof}
There exists a real number~$\lambda$, depending only on~$K$, such that the following holds: every ideal class of~$K$ has an ideal~$\gerb$ satisfying 
${\lambda^{-d}\le \norm\gerb\le \lambda^d}$
and having a $\Z$-basis ${\beta_1,\ldots,\beta_d}$ such that 
\begin{equation*}
\lambda^{-1}\le \lsize{\beta_i}\le \size{\beta_i} \le \lambda. \qquad (i=1, \ldots, d).
\end{equation*} 
In particular, such~$\gerb$ can be found in the class of our ideal~$\gera$. Lemma~\ref{lconv} implies that the principal ideal ${\gera\gerb^{-1}}$ has a generator~$\gamma$ satisfying 
\begin{equation*}
(\kappa')^{-1}(\norm(\gera\gerb^{-1}))^{1/d}\le \lsize\gamma\le \size\gamma \le \kappa'(\norm(\gera\gerb^{-1}))^{1/d},
\end{equation*} 
where~$\kappa'$ depends only on~$K$. Setting ${\alpha_i=\beta_i\gamma}$, we obtain a $\Z$-basis ${\alpha_1, \ldots, \alpha_d}$ of~$\gera$ satisfying~\eqref{ebas} with ${\kappa=\kappa'\lambda^2}$. \qed
\end{proof}

\bigskip

Given a $K$-prime~$\gerp$, an element ${\pi\in K}$ is called $\gerp$-\textsl{primitive} if ${v_\gerp(\pi)=1}$, where~$v_\gerp$ is the place associated to~$\gerp$. Since a $\Z$-basis of~$\gerp$ has at least one primitive element, Lemma~\ref{lbas} has the following consequence. 

{\sloppy

\begin{corollary}[a ``reduced'' primitive element]
\label{cprim}
There exists a positive number~$\kappa$ (depending only on~$K$) such that the following holds.
For every $K$-prime~$\gerp$  there exists a $\gerp$-primitive ${\pi\in \OO_K}$ satisfying 
$$
\kappa^{-1}(\norm\gerp)^{1/d}\le \lsize\pi\le \size\pi\le \kappa(\norm\gerp)^{1/d} .
$$
\end{corollary}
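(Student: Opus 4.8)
The plan is simply to apply Lemma~\ref{lbas} to the fractional ideal ${\gera=\gerp}$, which produces a $\Z$-basis ${\alpha_1,\ldots,\alpha_d}$ of~$\gerp$ with
${\kappa^{-1}(\norm\gerp)^{1/d}\le \lsize{\alpha_i}\le \size{\alpha_i}\le \kappa(\norm\gerp)^{1/d}}$ for each~$i$, where~$\kappa$ depends only on~$K$. It then remains only to verify that at least one of these basis elements is $\gerp$-primitive; we take~$\pi$ to be that element, and the displayed inequality of the corollary holds verbatim with the very same~$\kappa$.

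To see that some $\alpha_i$ is $\gerp$-primitive, note first that each ${\alpha_i\in\gerp\subset\OO_K}$, so ${v_\gerp(\alpha_i)\ge 1}$. Since ${\gerp\ne\gerp^2}$ (for instance because ${\norm\gerp^2>\norm\gerp}$), we may pick ${x\in\gerp\smallsetminus\gerp^2}$, and then ${v_\gerp(x)=1}$. Writing ${x=\sum_{i=1}^d c_i\alpha_i}$ with ${c_i\in\Z\subset\OO_K}$, so ${v_\gerp(c_i)\ge 0}$, the ultrametric inequality for~$v_\gerp$ gives ${1=v_\gerp(x)\ge\min_i v_\gerp(c_i\alpha_i)\ge\min_i v_\gerp(\alpha_i)\ge 1}$. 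Hence ${v_\gerp(\alpha_i)=1}$ for some~$i$, and that~$\alpha_i$ is the desired $\gerp$-primitive element.

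There is no genuine obstacle here: the corollary is a direct repackaging of Lemma~\ref{lbas} with the elementary observation that a $\Z$-basis of a prime ideal cannot have all its members lying in the square of that ideal. The only point that deserves a word is the bookkeeping of constants — namely, that~$\kappa$ in the corollary may be taken literally equal to the one furnished by Lemma~\ref{lbas} — but this is immediate, since we merely select one member of an already-reduced basis and discard the rest.
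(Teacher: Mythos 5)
Your proof is correct and takes the same approach as the paper: apply Lemma~\ref{lbas} to ${\gera=\gerp}$ and observe that the resulting $\Z$-basis must contain a $\gerp$-primitive element. The paper merely asserts this last fact without proof, while you spell out the (slightly more elaborate than necessary, but valid) ultrametric argument; one could also note more directly that if every ${\alpha_i}$ lay in ${\gerp^2}$ then ${\gerp=\sum_i\Z\alpha_i\subseteq\gerp^2}$, a contradiction.
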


}

Another application of Lemma~\ref{lbas} is locating ``reduced'' elements in residue classes.  

{\sloppy

\begin{corollary}[a ``reduced'' element in a residue class]
\label{cmod}
There exists a real  ${\kappa\ge 1}$ (depending only on~$K$) such that the following holds.
Let~$\gera$ be a non-zero ideal of $\OO_K$. Then for every  ${\alpha \in \OO_K}$ there exists ${\beta\in \OO_K}$ such that ${\alpha\equiv\beta\bmod \gera}$ and ${\size\beta\le \kappa(\norm\gera)^{1/d}}$.
\end{corollary}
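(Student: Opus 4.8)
The plan is to reduce Corollary~\ref{cmod} to Lemma~\ref{lbas} by a standard ``fundamental domain'' argument. First I would apply Lemma~\ref{lbas} to the ideal~$\gera$ itself, obtaining a $\Z$-basis ${\alpha_1,\ldots,\alpha_d}$ of~$\gera$ with ${\size{\alpha_i}\le \kappa_0(\norm\gera)^{1/d}}$ for all~$i$, where~$\kappa_0$ depends only on~$K$. The point is that any element of~$\OO_K$ can be translated, modulo~$\gera$, into the ``box'' spanned by ${\alpha_1,\ldots,\alpha_d}$ with bounded coefficients.

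Concretely, given ${\alpha\in\OO_K}$, the ring of integers $\OO_K$ is itself a free $\Z$-module of rank~$d$, and ${\gera\subseteq \OO_K}$ is a finite-index subgroup; equivalently, $\OO_K\otimes\R\cong\R^d$ and the $\alpha_i$ form an $\R$-basis of this space. Hence I can write ${\alpha=\sum_{i=1}^d c_i\alpha_i}$ with ${c_i\in\R}$, and then set ${\beta=\alpha-\sum_{i=1}^d\lfloor c_i\rfloor\alpha_i=\sum_{i=1}^d\{c_i\}\alpha_i}$, where ${\{c_i\}=c_i-\lfloor c_i\rfloor\in[0,1)}$. Then ${\beta\equiv\alpha\bmod\gera}$ (we subtracted an integer combination of the $\alpha_i$, which lies in~$\gera$), and ${\beta\in\OO_K}$ (it is $\alpha$ minus an element of $\gera\subseteq\OO_K$). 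By the subadditivity and the behaviour of $\size{\cdot}$ under scaling (the inequalities displayed in~\eqref{esizeheight}, together with ${\size{c\gamma}=|c|\size\gamma}$ for ${c\in\Q}$, ${c\ge 0}$, which follows since each infinite place restricts to the usual absolute value on~$\Q$), we get
$$
\size\beta\le \sum_{i=1}^d \{c_i\}\size{\alpha_i}\le \sum_{i=1}^d \size{\alpha_i}\le d\kappa_0(\norm\gera)^{1/d}.
$$
Taking ${\kappa=\max\{1,d\kappa_0\}}$ finishes the proof.

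The only mildly delicate point — and the one I would present most carefully — is the claim that a $\Z$-basis of the lattice $\gera$ is automatically an $\R$-basis of $\OO_K\otimes_\Z\R$, so that every $\alpha\in\OO_K$ (indeed every element of $K\otimes\R$) has real coordinates with respect to it. This holds because $\gera$ is a full-rank sublattice of $\OO_K$ (it contains, e.g., $(\norm\gera)\OO_K$, since $\norm\gera\in\gera$), hence spans the same $d$-dimensional $\R$-vector space. Everything else is a routine reduction-mod-lattice step, and no new obstacle arises beyond the bookkeeping already handled by Lemma~\ref{lbas}. \qed
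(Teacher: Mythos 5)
Your proof is correct and is essentially the same ``reduce modulo the lattice into the fundamental parallelepiped'' argument that the paper gives: both invoke Lemma~\ref{lbas} to get a $\Z$-basis of~$\gera$ of controlled size, translate $\alpha$ by an integer combination of that basis into the fundamental domain, and bound the result by $d\kappa'(\norm\gera)^{1/d}$ using subadditivity of $\size{\cdot}$. The only cosmetic difference is that the paper sets this up via the Minkowski embedding $K\hookrightarrow\R^{s_1}\times\bbC^{s_2}$ while you phrase it in $\OO_K\otimes_\Z\R$; these are the same thing.
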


}

\begin{proof}
It is a standard lattice argument. We identify~$K$ with its image (under the diagonal embedding) in ${V=\R^{s_1}\times\bbC^{s_2}}$, where~$s_1$ and $2s_2$ are the numbers of real and complex embeddings of~$K$. Then~$\gera$ becomes a lattice in~$V$, and every element of~$V$ is congruent modulo~$\gera$ to an element of its fundamental domain. According to Lemma~\ref{lbas}, the lattice~$\gera$ has a basis ${\alpha_1,\ldots,\alpha_d}$ satisfying ${\size{\alpha_i}\le \kappa'(\norm\gera)^{1/d}}$, where~$\kappa'$ depends only on~$K$. Every element of the fundamental domain spanned by this basis  is of size at most ${d\kappa'(\norm\gera)^{1/d}}$. This proves the corollary with ${\kappa=d\kappa'}$. \qed
\end{proof}

\begin{remark}
Lower estimates for the ``lower size'' obtained in~\eqref{econv},~\eqref{ebas} etc. will not be used elsewhere in this article; we include them only for completeness. Moreover, the proof of Corollary~\ref{cmod} can be easily modified to obtain a lower estimate as well. We do not do it because we will not need this lower estimate.

\end{remark}

\section{Thin Subsets and Hilbert's Irreducibility Theorem}
\label{shilb}

In this section we recall basic definitions and facts about thin sets, and state Hilbert's Irreducibility Theorem.

Let~$K$ be a field of characteristic~$0$.  We call  ${\mho \subset K}$  a \textsl{basic thin subset} of~$K$ if there exists a (smooth geometrically irreducible) curve~$Y$ defined over~$K$ and a non-constant rational function ${u\in K(X)}$ of degree at least~$2$  such that ${\mho \subset u(Y(K))}$. A \textsl{thin subset} of~$K$ is a union of finitely many basic thin subsets. Thin subsets form an ideal in the algebra of subsets of~$K$. Serre in \cite[Section~9.1]{Se97} gives a differently looking, but equivalent definition of thin sets.

Any finite set is thin, and if~$K$ is algebraically closed then any  subset of~$K$ is thin. 

\begin{remark}
If~$L$ is an extension of~$K$ then any thin subset of~$K$ is also thin as a subset of~$L$. The converse is true when~$L$ is finitely generated over~$K$ \cite[Proposition~2.1]{BL05} but not in general; for instance, any number field~$K$ is a thin subset of its algebraic closure~$\bar K$ but is not a thin subset of itself by the Hilbert Irreducibility Theorem quoted below. 
\end{remark}

Using elementary Galois theory one easily proves the following (see \cite[Section~9.2]{Se97})

\begin{proposition}
\label{pgalo}
Let~$X$ be a curve over~$K$ and ${t\in K(X)}$ a non-constant rational function. Then the set of ${\tau\in K}$ such that the fiber ${t^{-1}(\tau)}$ is reducible over~$K$ is thin. 
\end{proposition}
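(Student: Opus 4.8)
The plan is to reduce the statement to the basic case of a single curve and then run a standard Galois-theoretic argument. Recall that $t^{-1}(\tau)$ being reducible over $K$ means that the fiber, as a $G_K$-set, is \emph{not} transitive; equivalently, if we pick $P_\tau\in t^{-1}(\tau)$, the point is defined over a proper subextension in the sense that $K(P_\tau)$ has degree $<n$, \emph{or} the fiber is a union of several $G_K$-orbits. First I would fix a finite Galois extension $L/K$ over which $X$ and $t$ are "split" in the relevant sense; concretely, let $\widetilde X\to \P^1_t$ be the Galois closure of the covering $t\colon X\to\P^1$, defined over $\bar K$, and let $L$ be a number field over which $\widetilde X$ and all the intermediate objects are defined, with Galois group $\mathrm{Gal}(\widetilde X/\P^1)=G$ acting as the monodromy group. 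The key point is the classical dictionary: for $\tau\in K$ not a branch point, the fiber $t^{-1}(\tau)$ is $K$-irreducible if and only if the Frobenius-type element (the image of $\mathrm{Gal}(\bar K/K)$ in $G$, via the specialization of the fundamental group) acts transitively on the $n$ cosets $G/H$, where $H=\mathrm{Gal}(\widetilde X/X)$.

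The main step is then an inclusion-exclusion / counting argument on $G$. The set of $\tau\in K$ for which the fiber is reducible is, up to the finitely many branch points, exactly the set of $\tau$ whose associated conjugacy class of subgroups of $G$ fails to be transitive on $G/H$. For each proper subgroup $G'\subsetneq G$ that is contained in some conjugate of a maximal non-transitive subgroup, the locus of $\tau$ with specialization landing in $G'$ is covered by $u(Y(K))$ where $Y=\widetilde X/G'$ and $u$ is the induced map of degree $[G:G']\ge 2$; this is precisely a basic thin set. Since $G$ is finite there are only finitely many such $G'$ up to conjugacy, so the reducible locus is contained in a finite union of basic thin sets, hence thin. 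I would phrase this cleanly by invoking the standard fact (Serre, \cite[Section~9.2]{Se97}) that the image of $u(Y(K))$ in $K$ for a degree-$\ge 2$ cover $u\colon Y\to X$ is thin, combined with the observation that "reducible fiber" forces the point to lift through one of finitely many such covers.

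The technical care needed is in handling two edge cases: the branch points of $t$ (finitely many values of $\tau$, hence a finite — thus thin — set, which can be absorbed), and the possibility that $X$ itself is not the full cover but the covering $X\to\P^1$ already factors. These don't cause real trouble: any factorization $X\to Y\to\P^1$ with $\deg(X\to Y)\ge 1$ just means we work with the correct $H$, and if $t^{-1}(\tau)$ is reducible the relevant intermediate curve exists by Galois theory. One should also note that the statement is about reducibility over $K$ specifically, so no subtlety about geometric irreducibility of the fibers arises — we only need transitivity of the image of $G_K$, not of all of $G$.

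I expect the \textbf{main obstacle} to be purely expository rather than mathematical: setting up the monodromy/Galois-closure correspondence precisely enough that the phrase "the specialization of $G_K$ acts non-transitively" is rigorous, including the (standard but slightly delicate) fact that for all but finitely many $\tau$ the decomposition of $t^{-1}(\tau)$ into $K$-irreducible components matches the orbit decomposition predicted by the image of $G_K$ in $G$. Once that dictionary is in place, the thinness conclusion is immediate from the definition, because each "bad" orbit-type corresponds to lifting $\tau$ through a cover of degree $\ge 2$. In a paper of this style I would likely just cite Serre for the whole Proposition, but if a self-contained argument is wanted, the outline above — Galois closure, finitely many intermediate subgroups, each giving a basic thin set — is the way to go.
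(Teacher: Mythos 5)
The paper does not supply its own proof of this proposition: it simply remarks that it follows from ``elementary Galois theory'' and cites Serre, \cite[Section~9.2]{Se97}. Your outline correctly anticipates this, and the sketch you give — pass to the Galois closure $\widetilde X\to\P^1$ of the cover $t$, identify (for all but finitely many $\tau$) irreducibility of $t^{-1}(\tau)$ over $K$ with transitivity of the decomposition/specialization subgroup of $G=\mathrm{Gal}(\widetilde K(X)/K(t))$ on $G/H$, and observe that each failure of transitivity forces $\tau$ into the image of the $K$-points of a quotient curve $\widetilde X/G'$ of degree $\ge 2$ over $\P^1$, of which there are only finitely many up to conjugacy — is exactly the standard argument, and exactly what Serre does.

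Two small points worth tightening if you were to write this out. First, the proposition is stated for an arbitrary field $K$ of characteristic $0$, not just a number field, so your passage ``let $L$ be a number field over which $\widetilde X$\dots is defined'' should just be dropped: the Galois closure of $K(X)/K(t)$ is already a $K(t)$-algebra, hence the curve $\widetilde X$ and all quotients $\widetilde X/G'$ are defined over $K$. Second, the paper's definition of a basic thin set requires the covering curve to be geometrically irreducible, whereas $\widetilde X/G'$ need not be; one must note that a curve over $K$ which is not geometrically irreducible has only finitely many $K$-points (a standard fact, also in Serre), so the corresponding contribution to the bad locus is finite and hence thin. Neither of these affects the substance of your argument; they are precisely the ``expository'' issues you flag.
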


Hilbert's Irreducibility Theorem asserts that when~$K$ is a number field then its ring of integers $\OO_K$ is not a thin subset of~$K$. In fact, one has the following counting result (see~\cite{Se97}, Theorem on page~134). 

\begin{theorem}
\label{tthin}
Let~$K$ be a number field of degree~$d$ over~$\Q$ and ${\mho\subset \OO_K}$ a thin subset of~$K$. Then for  ${B\ge 1}$ the set~$\mho$ has  $O(B^{d/2})$ elements~$\alpha$ with ${\size\alpha\le B}$, the implicit constant depending  on~$K$ and on~$\mho$. 
\end{theorem}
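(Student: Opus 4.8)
The plan is to reduce the counting of elements $\alpha \in \mho$ with $\size\alpha \le B$ to counting $K$-rational points of bounded height on curves, and then invoke a standard point-counting bound. Since a thin set is a finite union of basic thin sets, and a finite union of $O(B^{d/2})$-bounds is again $O(B^{d/2})$, it suffices to treat a single basic thin subset $\mho \subset u(Y(K))$, where $Y$ is a curve over $K$ and $u \in K(Y)$ has degree $m \ge 2$. Each $\alpha \in \mho$ with $\size\alpha \le B$ has at least one preimage $Q \in Y(K)$ with $u(Q) = \alpha$, so it is enough to bound the number of $Q \in Y(K)$ with $\size{u(Q)} \le B$.

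First I would pass from $Y$ to a plane model. Choosing an auxiliary function to adjoin to $u$ (or just taking a suitable projection), one realizes $Y$ birationally as a plane affine curve $C: F(u,w) = 0$ with $F \in K[U,W]$, in such a way that the map $Q \mapsto (u(Q), w(Q))$ is generically injective on $Y(K)$; the finitely many exceptional points contribute $O(1)$. On this model the condition $\size{u(Q)} \le B$ controls one coordinate, but one must also control $w(Q)$. Here I would use that $w$ satisfies an integral-type equation over $K[u]$: clearing denominators in $F(u,w)=0$, the value $w(Q)$ is an algebraic number whose size is bounded polynomially in $B$ (and in the fixed data), using the elementary size inequalities from \eqref{esizeheight} together with bounds on the roots of a polynomial in terms of its coefficients. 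Thus every relevant point $Q$ maps to a $K$-point $(u(Q),w(Q))$ on a fixed affine plane curve, with both coordinates of size $O(B^{\Theta})$ for some fixed exponent $\Theta$ depending only on $Y$ and $u$; equivalently of multiplicative height $O(B^{\Theta})$.

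Then I would apply a uniform upper bound for the number of $K$-rational points of height at most $H$ on an irreducible plane curve of fixed degree, of the shape $O(H^{2/\deg + \eps})$ or, more cheaply, the classical determinant/Bombieri–Pila–type bound; since $\deg F \ge \deg_W F \cdot(\text{something}) \ge 2$ and in fact the degree is at least $\max(2, m)$, any such bound gives $O(H^{c})$ with $c < 1$ after accounting for the birational map, and in particular $O(H)$. Wait — to land exactly at the exponent $d/2$ one cannot afford a polynomial blow-up $H = B^{\Theta}$ with $\Theta > 1/2$; so the more careful route is to \emph{not} pass through $w$ crudely but to count directly on $Y$. The cleanest way is to use the known refinement of Hilbert irreducibility counting: on a curve $Y/K$ with a degree-$m \ge 2$ map $u$, the number of $Q \in Y(K)$ with $\Height(u(Q)) \le B$ is $O(B^{d/2})$, which for the relevant application one deduces from the bound on $K$-points of bounded height on a curve of positive genus (finitely many, by Faltings, or $O(\log B)$ by the uniform bounds) together with the genus-zero case, where $Y \cong \P^1$ and $u$ is a rational map of degree $m \ge 2$, so $u(Q) = \alpha$ forces a $\P^1(K)$-point on the fiber, and counting the $\P^1(K)$-points of bounded height whose image has bounded height gives the square-root saving $O(B^{d/2})$ because the preimage coordinate has height $\approx B^{1/m} \le B^{1/2}$ and there are $O((B^{1/m})^{2d})$ such points.

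The main obstacle, therefore, is the genus-zero case and the bookkeeping of heights under the map $u$: one must check that a $K$-point $\alpha$ of size $\le B$ pulls back to a $K$-point of size $\le \kappa B^{1/m}$ on the normalization $\P^1$, uniformly in $\alpha$, which requires the reduced-representative lemmas of Section~\ref{ssideals} (Lemma~\ref{lbas}, Corollary~\ref{cprim}) to choose good coordinates on the fiber, and a clean estimate $\size{u(Q)} \gg \size{Q}^{m}$ away from the poles of $u$. Once that is in hand, counting lattice points in a box of side $B^{1/m}$ in the Minkowski space $V = \R^{s_1}\times\bbC^{s_2}$ of $K$ yields $O((B^{1/m})^{d}) = O(B^{d/m}) \le O(B^{d/2})$; for positive genus the fiber counting is even cheaper. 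Summing over the finitely many basic thin pieces and the finitely many irreducible components completes the proof, with the implied constant depending only on $K$ and on $\mho$.
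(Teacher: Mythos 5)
The paper does not prove Theorem~\ref{tthin}; it cites it from Serre's \emph{Lectures on the Mordell--Weil Theorem} (the Theorem on page 134, due to S.~D.~Cohen), where the proof goes through the large sieve, not through height counting on curves. So you are offering an alternative argument, and it has a genuine gap.

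The gap sits exactly where you yourself start to hesitate: the genus-zero case. You first assert that the number of preimages is $O\bigl((B^{1/m})^{2d}\bigr)$ and that this ``gives the square-root saving $O(B^{d/2})$.'' But $(B^{1/m})^{2d}=B^{2d/m}$, which for $m=2$ is $B^{d}$, not $B^{d/2}$ — Schanuel's count of points of $\P^1(K)$ of height at most $B^{1/m}$ is simply too large, and no square-root saving falls out of it. You then switch to a \emph{lattice} count, $O\bigl((B^{1/m})^{d}\bigr)=O(B^{d/m})$, which does land at $B^{d/2}$ for $m\ge 2$, but that count is for elements of $\OO_K$ in a box of side $B^{1/m}$ in the Minkowski space $\R^{s_1}\times\bbC^{s_2}$. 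To use it you must show that the preimage $Q$ of an integral $\alpha$ is itself (essentially) an algebraic integer of $K$ of size $\ll B^{1/m}$. That is not automatic. It holds when $\infty$ is a pole of $u$ (in particular when $u$ is a polynomial, which is why the square example works), but for a general rational map it fails: a $K$-rational point $Q$ with $u(Q)\in\OO_K$ can be non-integral at a place $v$ whenever $Q$ reduces $v$-adically to a non-polar point of $u$ at infinity. The reduced-representative Lemma~\ref{lbas} and Corollary~\ref{cprim} that you invoke give good generators of ideals and primitive elements; they say nothing about the integrality of $Q$. Nor does the archimedean estimate $\size{u(Q)}\gg\size{Q}^{m}$ — that controls sizes, not denominators.

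So as written the argument does not establish the exponent $d/2$. To rescue the height-theoretic route you would need an additional integrality step (moving a pole to $\infty$ after a base change, keeping track of units, and descending back to $K$), and this is not a routine detail. The cited proof sidesteps all of it: Cohen's large-sieve argument only needs that, for a positive proportion of primes $\gerp$ of degree one, the reduction of the cover has a positive proportion of non-image residues (a Chebotarev/Jordan statement), and then the $d$-dimensional large sieve with $Q\asymp B^{d/2}$ gives the bound directly and uniformly in the degree $m$. It is also worth noting that for the purposes of this paper a weaker bound $O(B^{d-\varepsilon})$ — which does follow from restriction of scalars plus Serre's $\Q^{d}$-statement $O(N^{d-1/2}\log N)$ — would already suffice in the proof of Theorem~\ref{tmainsize}, since Corollary~\ref{chilb} is used only to remove a term that is $o(B^d/\log B)$.
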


Combining this with Proposition~\ref{pgalo}, we obtain the following ``quantitative'' version of  Hilbert's Irreducibility Theorem.

\begin{corollary}
\label{chilb}
Let~$K$ be a number field of degree~$d$ over the rationals,~$X$ a curve over~$K$ and ${t\in K(X)}$ a non-constant rational function.  Then for  ${B\ge 1}$ there exist at most $O(B^{d/2})$ elements ${\tau \in \OO_K}$ with ${\size\alpha\le B}$ such that  the fiber ${t^{-1}(\tau)}$ is reducible over~$K$. Here the implicit constant depends only on~$K$,~$X$ and~$t$. 
\end{corollary}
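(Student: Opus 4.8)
\textbf{Proof proposal for Corollary~\ref{chilb}.}
The plan is to deduce this statement by combining Proposition~\ref{pgalo} with the counting bound of Theorem~\ref{tthin}, after reconciling the two slightly different notions of ``size'' appearing in the problem (the \emph{upper size} $\size{\cdot}$ used in Theorem~\ref{tthin} versus the fact that Theorem~\ref{tthin} is stated for a fixed thin subset). First I would apply Proposition~\ref{pgalo} to the given curve~$X$ and function~$t$: the set
$$
\mho = \{\tau\in K : t^{-1}(\tau)\ \text{is reducible over}\ K\}
$$
is a thin subset of~$K$. Since we only care about $\tau\in\OO_K$, the set $\mho\cap\OO_K$ is also thin (thin sets form an ideal in the algebra of subsets, so intersecting with $\OO_K$ — or rather, restricting attention to integral points — does not enlarge it; more precisely $\mho\cap\OO_K\subseteq\mho$ is contained in a thin set, hence thin). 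Crucially, $\mho\cap\OO_K$ is contained in~$\OO_K$, so Theorem~\ref{tthin} applies to it directly.

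The second step is to invoke Theorem~\ref{tthin} with this thin set $\mho\cap\OO_K$: for every $B\ge 1$, the number of $\alpha\in\mho\cap\OO_K$ with $\size\alpha\le B$ is $O(B^{d/2})$, where the implicit constant depends on~$K$ and on $\mho\cap\OO_K$, hence — since $\mho$ is determined by~$X$ and~$t$ — only on~$K$,~$X$ and~$t$. This is almost exactly the assertion of the corollary; the only cosmetic point is that the corollary writes the condition as $\size\alpha\le B$ while speaking of elements~$\tau$, which is just a change of name of the variable. So the corollary follows immediately, with the same implicit constant.

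Honestly, there is no real obstacle here: the corollary is a routine ``plug Proposition~\ref{pgalo} into Theorem~\ref{tthin}'' combination, and the text already signals this by saying ``Combining this with Proposition~\ref{pgalo}, we obtain\ldots''. The only thing worth being slightly careful about is the dependence of the implicit constant — one should check that it genuinely depends only on the data $(K,X,t)$ and not on any auxiliary choices (such as a decomposition of $\mho$ into basic thin subsets). This is fine because Proposition~\ref{pgalo} produces a thin set $\mho$ canonically attached to $(X,t)$, and Theorem~\ref{tthin}'s constant depends only on~$K$ and on the thin set itself, not on how it is presented. Thus the proof is complete once these two cited results are assembled.
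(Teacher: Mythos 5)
Your argument is correct and is precisely the route the paper intends: Proposition~\ref{pgalo} gives the thin set, and Theorem~\ref{tthin} (applied to its intersection with~$\OO_K$) gives the $O(B^{d/2})$ bound with constant depending only on $K$, $X$, $t$. The paper leaves this combination implicit, and your careful remark about the dependence of the implicit constant is a sound, if routine, addition.
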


\section{Local Behavior of Functions on a Curve}

In this section we compare the behavior of two distinct functions in a neighborhood of a point on an algebraic curve. Our main tool will be the Puiseux expansion. 

Unless the contrary is stated explicitly, in this section~$X$ is a (smooth projective) curve over a number field~$K$ and ${t,u\in K(X)}$ non-constant $K$-rational functions. Further, let ${A\in X(K)}$ be a $K$-rational point. For ${v \in M_K}$ we want to compare~$t$ and~$u$ in a $v$-adic neighborhood of~$A$. 

\begin{theorem}
\label{tpuis}
Assume that~$A$ is a $(t,u)$-regular point of~$X$ (as defined in Subsection~\ref{sscurves}). 
There exists a finite subset ${S\subset M_K}$ (depending on~$X$,~$t$,~$u$ and~$A$) such that for ${v\in M_K\smallsetminus S}$ the following holds. Assume that ${P\in X(K_v)}$ satisfies 
$$
|t(P)-t(A)|_v<1, \qquad |u(P)-u(A)|_v<1. 
$$
Then 
$$
|t(P)-t(A)|_v^{1/v_A(t)}= |u(P)-u(A)|_v^{1/v_A(u)}
$$ 
\end{theorem}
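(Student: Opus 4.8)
The plan is to use the Puiseux expansion of $u$ as a function of $t$ near $A$, and then extract a $v$-adic estimate from it for all but finitely many $v$. Write $\tau_0 = t(A)$, $\omega_0 = u(A)$, $e = v_A(t)$, $f = v_A(u)$; by $(t,u)$-regularity $\min\{e,f\}=1$, and by Proposition~\ref{pnsing} we have $K(X)=K(t,u)$. Let $s$ be a uniformizer at $A$ (defined over some finite extension, but this can be absorbed into $S$; alternatively work after replacing $t-\tau_0$ or $u-\omega_0$ by whichever has order $1$ and is thus itself a uniformizer over $K$). Since $v_A(t-\tau_0)=e$ and $v_A(u-\omega_0)=f$, we can write, in the completed local ring at $A$,
$$
t - \tau_0 = c\, s^{e}(1 + \cdots), \qquad u - \omega_0 = c'\, s^{f}(1+\cdots),
$$
with $c,c'\in \bar K^\times$, so that $(u-\omega_0)^{e} = \gamma\,(t-\tau_0)^{f}\,(1 + \text{higher order in } s)$ for a suitable nonzero algebraic $\gamma$. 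Equivalently, after taking the branch with $\min\{e,f\}=1$ — say $e=1$ so that $t-\tau_0$ is itself a uniformizer — we get a genuine \emph{power series} expansion
$$
u - \omega_0 = \sum_{k\ge f} a_k (t-\tau_0)^{k/1}\quad\text{(if $e=1$), resp. a Puiseux series if $f=1$},
$$
with coefficients $a_k$ in a fixed number field $L$ (a finite extension of $K$ depending only on $X,t,u,A$), and $a_f\ne 0$.

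The second step is the standard ``all but finitely many places'' argument applied to this single algebraic power/Puiseux series. Enlarge $S$ to contain: the infinite places; the finitely many places ramifying in $L/K$ or dividing the discriminant of a defining polynomial $F(T,U)$ for the curve; the places where $\tau_0,\omega_0$ or the leading coefficients $a_f, c, c', \gamma$ fail to be $v$-adic units; the finitely many places appearing in Proposition~\ref{pnsing}'s list of $(t,u)$-singular points or where the model of $X$ has bad reduction; and — crucially — the places in an appropriate ``radius of convergence'' set, obtained by bounding the $v$-adic absolute values of all coefficients $a_k$ of the series uniformly (this is possible because the $a_k$ lie in a finitely generated ring, so there is a single finite set of places outside of which $|a_k|_v\le 1$ for all $k$, by the standard fact that an element of a number field is a $v$-adic integer for all but finitely many $v$, applied to the finitely many generators). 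For $v\notin S$ and $P\in X(K_v)$ with $|t(P)-\tau_0|_v<1$ and $|u(P)-\omega_0|_v<1$, the non-archimedean triangle inequality then forces the leading term of the series to dominate: $|u(P)-\omega_0|_v = |a_f|_v\,|t(P)-\tau_0|_v^{f} = |t(P)-\tau_0|_v^{f}$ in the case $e=1$, and symmetrically $|t(P)-\tau_0|_v = |u(P)-\omega_0|_v^{e}$ in the case $f=1$; either way this is exactly $|t(P)-t(A)|_v^{1/v_A(t)} = |u(P)-u(A)|_v^{1/v_A(u)}$.

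I expect the main obstacle to be the bookkeeping around the Puiseux/power series when the uniformizer and the coefficients are only defined over a proper extension $L$ of $K$: one must check that the defining number field $L$ of the expansion, and hence the finite set of ``bad'' places coming from its ring of integers, depends only on $X,t,u,A$ and not on $v$ or $P$. The other slightly delicate point is ensuring the coefficient bound $|a_k|_v\le 1$ \emph{uniformly in $k$} for $v\notin S$ — this needs the series to satisfy an algebraic relation (it is the Puiseux expansion of a root of the polynomial $F(T,U)=0$ at a nonsingular branch), from which one derives a recursion for the $a_k$ with coefficients in a fixed finitely generated ring, so that $v$-integrality of the finitely many ``seed'' data propagates to all $a_k$. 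Once this is set up, the non-archimedean domination of the leading term is immediate, and the claimed equality of valuations drops out; handling the two cases $e=1$ and $f=1$ symmetrically (they cannot both fail since $\min\{e,f\}=1$) completes the proof.
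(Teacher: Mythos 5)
Your plan is the same as the paper's: write the Puiseux expansion of $u$ in the uniformizer $t-t(A)$ (WLOG $v_A(t)=1$), invoke Eisenstein's theorem to get $v$-integral coefficients for almost all $v$, make the leading coefficient a $v$-unit, and then read off $|u(P)-u(A)|_v=|t(P)-t(A)|_v^{v_A(u)}$ from the ultrametric inequality. So the approach matches Proposition~\ref{ppuis} plus the short deduction that follows it.

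There is however one genuine gap, and it is exactly the nontrivial part of the paper's Proposition~\ref{ppuis}: you pass directly from ``the series $\sum a_k(t(P)-t(A))^k$ converges'' to ``its value is $u(P)-u(A)$'' without argument. Convergence is cheap ($v$-integral coefficients plus $|t(P)-t(A)|_v<1$), but the equality of the limit with $u(P)-u(A)$ is not automatic: for a given $\tau=t(P)$ the polynomial $F(\tau,U)$ has $n=\deg_U F$ roots in $\overline{K_v}$, one for each branch of $t$ over $\tau$, and the series value is only one of them. The paper handles this via Hensel's lemma, and this is precisely where both hypotheses of the theorem earn their keep: $(t,u)$-regularity at $A$ forces $F'_U(t(A),u(A))\ne 0$ (because $u(A)$ is a root of $F(t(A),U)$ of multiplicity $v_A(t)=1$), so after normalizing and removing finitely many $v$ one has $|F(\tau,u(A))|_v<1$ and $|F'_U(\tau,u(A))|_v=1$, whence $F(\tau,U)$ has a \emph{unique} root $\omega$ with $|\omega-u(A)|_v<1$. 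The hypothesis $|u(P)-u(A)|_v<1$ then identifies $u(P)$ with this unique root, which is also where the series converges. Your write-up mentions $F(T,U)$ and the recursion for the $a_k$, but never confronts the branch-selection issue, so as written the crucial identity $u(P)-u(A)=\sum a_k(t(P)-t(A))^k$ is unproved. Once you insert this Hensel step the rest of your argument is correct and coincides with the paper's.
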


\begin{remark}
The $(t,u)$-regularity hypothesis can be relaxed: in fact, it suffices to assume that our point~$A$ satisfies only condition~\ref{idist} in the definition of $(t,u)$-regularity in Subsection~\ref{sscurves}, while
condition~\ref{iord} may be be suppressed. But the present form of the theorem is sufficient for us, and assuming~\ref{iord} slightly simplifies the proof. 
\end{remark}

\subsection{Puiseux Expansion}

Let us briefly recall the notion of the Puiseux expansion. Let~$K$ be a field,~$X$ a smooth projective curve over~$K$ and ${A\in X(K)}$ a $K$-rational point. Further, let ${t\in K(X)}$ be a non-constant $K$-rational function with ${v_A(t)=1}$. Then one can realize the completion of $K(X)$ with respect to the valuation $v_A(\cdot)$ as  the field of formal power series $K((t))$. In particular, we view $K(X)$ as a subfield of $K((t))$, the function ${t\in K(X)}$ being identified with ${t\in K((t))}$. 

If ${u\in K(X)}$ is another $K$-rational function on~$X$,  then its image in $K((t))$ is a certain power series ${\sum_{k=\nu}^\infty a_k t^k}$ with ${\nu=v_A(u)}$ and ${a_\nu\ne 0}$. We call this series the \textsl{Puiseux expansion of~$u$ at~$A$ in~$t$}.

Now assume that~$K$ is a number field. Then the coefficients~$a_k$ of the Puiseux expansion satisfy the classical \textsl{Eisenstein Theorem}, which says, informally, that for all ${v\in M_K}$ the $v$-adic norm of the coefficients grows at most exponentially in~$k$, and for all but finitely many~$v$ they are bounded by~$1$. In symbols:    for every ${v\in M_K}$ there exists ${C_v\ge 1}$, such that ${C_v=1}$ for almost all~$v$, and 
$$
|a_k|_v \le C_v^{k-\nu+1}\qquad (k\ge \nu, \ v\in M_K). 
$$
We will only need the following weaker result. 

\begin{proposition}[Eisenstein]
\label{peis}
There exists a finite set ${S\subset M_K}$ (containing all the infinite places) such that for every  ${v\in M_K\smallsetminus S}$ the coefficients~$a_k$ are $v$-adic integers. 
\end{proposition}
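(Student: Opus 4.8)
The plan is to prove the Eisenstein property by exhibiting a polynomial relation between $t$ and $u$ with coefficients in $\OO_K$ and then arguing that the Puiseux coefficients, being obtained from this relation by iterated solving, can only acquire denominators at finitely many places. Concretely, first I would pick an irreducible polynomial $F(T,U)\in K[T,U]$ with $F(t,u)=0$; after clearing denominators we may assume $F\in\OO_K[T,U]$. Write $F=\sum_j c_j(T)U^j$ with $c_j\in\OO_K[T]$, and let $m=\deg_U F$, so that the leading coefficient $c_m(T)\in\OO_K[T]$ is a nonzero polynomial. Translating coordinates, replace $T$ by $T+t(A)$ and $U$ by $U+u(A)$; since $t(A),u(A)\in K$, the new polynomial still lies in $\OO_K'[T,U]$ for $\OO_K'=\OO_K[\text{denominators of }t(A),u(A)]$, which is still an order-type ring differing from $\OO_K$ at only finitely many places. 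So without loss of generality $A$ corresponds to $(T,U)=(0,0)$, $v_A(t)=1$, and $u=\sum_{k\ge\nu}a_kt^k$ with $\nu=v_A(u)\ge 1$ (using condition \ref{iord} and $(t,u)$-regularity, which via Proposition~\ref{pnsing}\ref{insig} forces $\nu=1$ actually, but we do not even need that).

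Next I would set up the recursion. Substituting the formal series $u(t)=\sum_{k\ge\nu}a_kt^k$ into $F(t,U)=0$ and expanding in powers of $t$, each coefficient of $t^N$ gives an equation of the form
\[
\gamma\, a_{N+\nu-?} \;=\; (\text{polynomial over }\OO_K'\text{ in }a_\nu,\ldots,a_{N+\nu-1}),
\]
where $\gamma$ is a fixed nonzero element of $K$ determined by the lowest-order behavior of $F$ at $(0,0)$ — essentially $\gamma$ is (a unit times) $F'_U(0,0)$ or, if that vanishes, the leading coefficient in the relevant Newton-polygon edge, which is nonzero because $F$ is separable in $U$ over $K(T)$ (as $K(X)=K(t,u)$ is separable, automatic in characteristic $0$). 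The key point is that this $\gamma$ does not depend on $N$. Hence, by induction on $k$, every $a_k$ lies in $\OO_K'[\gamma^{-1}]=\OO_K'[1/\gamma]$, and also (inspecting the discriminant appearing when we first solve for $a_\nu$ as a root of a polynomial equation) in $\OO_K'[1/\operatorname{disc}]$. Both $\gamma$ and the discriminant are fixed nonzero elements of $K$, or of a fixed finite extension; enlarging the ring by these finitely many inverses, and then taking $S$ to be $M_K^\infty$ together with all finite places dividing any of the finitely many numbers and ideals that have appeared (denominators of the $c_j$, of $t(A)$, of $u(A)$, of $\gamma$, of the discriminant, and of $a_\nu$ itself), we get that $a_k\in\OO_{K,v}$ for all $k$ and all $v\notin S$. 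That is exactly the claimed statement.

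The one point requiring care — and the main obstacle — is the first step of the recursion, where one solves for the leading coefficient $a_\nu$: this is genuinely a root of a polynomial over $K$, not obtained by linear division, so $a_\nu$ may a priori lie in a nontrivial finite extension $L/K$ and one must check that once $S$ is enlarged to include the places where this root fails to be integral (finitely many, since $a_\nu$ is a fixed algebraic number), everything downstream stays integral. Equivalently, one can sidestep this by noting that because $A\in X(K)$ and $u\in K(X)$, the expansion actually has coefficients in $K$ from the outset — $K((t))$ contains $K(X)$ — so $a_\nu\in K^\times$ is a fixed nonzero element and we just throw its denominator into $S$. After that the recursion is purely $\OO_{K,v}$-linear with the fixed invertible ``multiplier'' coming from the part of $F$ of lowest $t$-order, and a clean induction finishes it. I would present the argument in this streamlined form: reduce to $(0,0)$ with $v_A(t)=1$, clear denominators in $F$, extract the fixed bad set $S$ from the finitely many coefficients involved, and run the induction.
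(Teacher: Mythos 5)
The paper does not itself prove Proposition~\ref{peis}; it is quoted as (a weak form of) the classical Eisenstein theorem, so there is no in-paper proof to compare with. Your recursion-on-coefficients approach is the standard route, and the set-up is sound: clear denominators in $F$, translate $A$ to the origin (this uses $t(A),u(A)\in K$, and for $u(A)=\infty$ one would first pass to $1/u$), read off the coefficient of $t^N$ in $F(t,\sum a_k t^k)=0$, and put the finitely many fixed quantities that occur into $S$.

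The gap is the claim that, once $a_\nu$ is known, the coefficient-of-$t^N$ equation is \emph{linear} in the top unknown with a \emph{fixed} nonzero multiplier $\gamma$ independent of $N$. That holds only when $a_\nu$ is a simple root of the Newton-polygon edge polynomial (equivalently, when $F'_U$ does not vanish to higher order along the branch). It can fail even with $A\in X(K)$ and $v_A(t)=1$: take $F(T,U)=(U-T)^2-T^4(1+T)$, irreducible over $\Q(T)$, with the $\Q$-rational branch $u=t+t^2(1+t)^{1/2}$ at the corresponding point $A$; the $t^2$-coefficient equation is $(a_1-1)^2=0$, the $t^3$-coefficient equation collapses to $0=0$ and constrains nothing, and the $t^4$-coefficient equation is $a_2^2-1=0$, again quadratic. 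So there is no fixed $\gamma$ appearing after one step, and ``put the denominator of $a_\nu$ into $S$'' does not yet make the recursion integral. The argument is repairable, but only by iterating the Newton-polygon step: each iteration strictly lowers the multiplicity of the root (here $U$-separability of $F$, which you do invoke, is what is really needed), so after finitely many steps one reaches a simple root and the recursion becomes $\OO_{K,v}$-linear with a fixed invertible multiplier; each step brings in only finitely many fixed elements of~$K$, so the bad set~$S$ stays finite. As written, the induction step is not established. (Incidentally, in the one place the paper actually \emph{uses} the proposition — inside Proposition~\ref{ppuis}, under the $(t,u)$-regularity hypothesis — one has $F'_U(0,0)\ne 0$, so there your one-step recursion would suffice; but Proposition~\ref{peis} as stated is more general, and your own proof text declines to use regularity.)
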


We want to show now that for all but finitely many~$v$ the Puiseux expansion indeed expresses~$u$ in terms of~$t$ in a suitable $v$-adic ``neighborhood'' of the point~$A$. 

\begin{proposition}
\label{ppuis}
In the set-up of this subsection, assume that~$A$ is  a  $(t,u)$-regular point of~$X$. Then 
there exists a finite set ${S\subset M_K}$ (which contains all the infinite places and might be different from the set~$S$ of Proposition~\ref{peis}) such that for every  ${v\in M_K\smallsetminus S}$ the coefficients $a_k$ are $v$-adic integers and the following holds. Assume that ${P\in X(K_v)}$ satisfies 
${|t(P)|_v<1}$ and ${|u(P)-u(A)|_v<1}$. 
Then the series ${\sum_{k=\nu}^\infty a_k t(P)^k}$ converges $v$-adically to $u(P)$. 
\end{proposition}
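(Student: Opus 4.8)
The plan is to combine the Eisenstein integrality of Proposition~\ref{peis} with an analysis of the ``other branch'' of~$u$ over~$t$ near the point~$A$. Since~$A$ is $(t,u)$-regular and ${v_A(t)=1}$, we may regard $K(X)$ as a subfield of $K((t))$, so $u$ is the power series ${\phi(t)=\sum_{k\ge\nu}a_k t^k}$ with ${\nu=v_A(u)}$. The first step is to enlarge the finite set~$S$ of Proposition~\ref{peis} (adding the finitely many places where certain reductions behave badly) so that for ${v\notin S}$ the coefficients $a_k$ are $v$-adic integers; this guarantees that the formal series ${\sum a_k t(P)^k}$ converges $v$-adically whenever ${|t(P)|_v<1}$, and its sum, call it~$y$, satisfies ${|y-u(A)|_v<1}$ (since ${a_\nu t(P)^\nu}$ has positive valuation and ${a_0=u(A)}$ after shifting appropriately — more precisely, the constant term of~$\phi$ is~$u(A)$ when ${\nu=0}$, and ${|y-u(A)|_v \le \max_{k\ge 1}|a_k|_v|t(P)|_v^k<1}$). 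So the real content is to show that this limit~$y$ equals $u(P)$ and not some other point in the fiber ${t^{-1}(t(P))}$.

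The key step is a separation-of-roots argument. Let ${F(T,U)\in K[T,U]}$ be an irreducible polynomial with ${F(t,u)=0}$; view it as a polynomial in~$U$ over $K[T]$, of degree ${m=\deg_U F}$ (which equals $[K(X):K(t)]$ when $t$ is chosen suitably, or at worst one passes to the component of the curve containing~$A$). Over $K((t))$ the polynomial $F(t,U)$ factors, and~$\phi(t)$ is one of its roots; by $(t,u)$-regularity (condition~\ref{idist} applied to points in the fiber, together with the non-vanishing of $F'_U$ at $(t(A),u(A))$ guaranteed by condition~\ref{iord} via Proposition~\ref{pnsing}\ref{insig}) the root $\phi(t)$ is simple, and the other roots $\phi_2(t),\dots$ (possibly Puiseux series in fractional powers of~$t$) take, at ${t=0}$, values ${\ne u(A)}$. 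Concretely: the resultant $\mathrm{Res}_U(F, F'_U)$ and the various differences ``$\phi_j(0)-u(A)$'' are nonzero algebraic numbers; throw into~$S$ all places dividing their numerators or denominators, all places of bad reduction of~$F$, all places where the leading coefficient (in~$U$) of $F$ or the relevant discriminants vanish, and all places where the point~$A$ or the functions $t,u$ fail to reduce well. Then for ${v\notin S}$: given ${P\in X(K_v)}$ with ${|t(P)|_v<1}$ and ${|u(P)-u(A)|_v<1}$, the pair $(t(P),u(P))$ is a $v$-adic root of $F(t(P),U)=0$; Hensel's lemma applied to the simple root shows that the unique root of $F(t(P),U)$ that is $v$-adically close to $u(A)$ is exactly $y=\sum a_k t(P)^k$; and the hypothesis ${|u(P)-u(A)|_v<1}$ forces $u(P)$ to be that close root (the other roots reduce to values $\ne u(A)\bmod v$, using that we removed the bad places), hence ${u(P)=y}$.

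The main obstacle is the bookkeeping for the ``other branches'': the roots of $F(t,U)$ over $K((t))$ other than $\phi(t)$ need not lie in $K((t))$ — they live in $K'((t^{1/e}))$ for some extension $K'/K$ and ramification index~$e$. One must argue that their constant terms are $\ne u(A)$ (this is exactly condition~\ref{idist}: a root with constant term $u(A)$ would give a point ${P'\ne A}$ with ${t(P')=t(A)}$, ${u(P')=u(A)}$) and then track the finitely many places where the reduction of the whole root-separation picture degenerates — this requires a clean model of $X$ (or of the plane curve $F=0$) over $\OO_{K,S}$ and a statement that for good~$v$, $v$-adic points of $X$ near~$A$ correspond to $v$-adic solutions of $F=0$ near $(t(A),u(A))$ with the expected branch structure. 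Once this finite set of exceptional places is absorbed into~$S$, the Hensel argument is routine. This is also precisely the point where, in the Appendix, Gillibert's more canonical model-theoretic approach streamlines the choice of~$S$.
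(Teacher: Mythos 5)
Your plan is correct and takes essentially the same route as the paper's proof: normalize so that $u(A)=0$, introduce the irreducible relation $F(t,u)=0$, use $(t,u)$-regularity to conclude that $F'_U(t(A),u(A))\ne 0$, and then invoke the uniqueness part of Hensel's lemma to identify $u(P)$ with the sum of the series. The paper handles the ``other branches'' more economically than you do: once $F$ is normalized so that $F'_U(0,0)=1$ and $S$ is enlarged so that $F$ has $v$-integral coefficients, the conditions $|F(\tau,0)|_v<1$ and $|F'_U(\tau,0)|_v=1$ plus Hensel's lemma immediately give that $F(\tau,U)$ has a unique root of $v$-adic norm $<1$, so there is no need to track the constant terms or fractional-power structure of the other Puiseux branches.
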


\paragraph{Proof}
We may assume without loss of generality that\footnote{This is obvious if ${u(A)\ne \infty}$ (just replace~$u$ by ${u-u(A)}$~), but requires some explanation in the case when~$A$ is a pole of~$u$. In this latter case the set~$S$ should be extended to make the leading coefficient of the Puiseux expansion for~$u$ an $S$-unit. Then for ${v\notin S}$ the coefficients of the series for~$u$ are $v$-adic integers if and only if the same holds for $1/u$. And if, in addition to this,  ${|t(P)|_v<1}$ and the series for $1/u$ converges $v$-adically at $t(P)$ to $1/u(P)$, then the series for~$u$ converges at $t(P)$ to $u(P)$.} 
${u(A)=0}$. Then 
$$
\nu=v_A(u)\ge 1.
$$
Let  ${F(T,U)\in K[T,U]}$ be a $K$-irreducible polynomial such that ${F(t,u)=0}$; in particular,
\begin{equation}
\label{ezero}
F(0,0)=F(t(A),u(A))=0.
\end{equation}
Further, let ${A_1=A,A_2,\ldots, A_s\in X(\bar K)}$ be all points which are zeros of~$t$ and which are \textsl{not} poles of~$u$. Then ${u(A_1),\ldots, u(A_s)}$ are the roots of the polynomial $F(0,U)$, of multiplicities ${v_{A_1}(t), \ldots, v_{A_s}(t)}$, and it has no other roots. Since ${A=A_1}$ is a $(t,u)$-regular point,  we have ${u(A)\ne u(A_i)}$ for ${i=2,\ldots, s}$. In particular, 
${0=u(A)}$ is a root of $P(0,U)$ of multiplicity ${v_A(t)=1}$. In other words, 
${F'_U(0,0)\ne 0}$, and we normalize the polynomial~$F$ to have 
\begin{equation}
\label{enzero}
F'_U(0,0)=1.
\end{equation}

Now let~$S$ be a a finite subset of~$M_K$ like in Proposition~\ref{peis}. Enlarging it, we may assume that for ${v\in M_K\smallsetminus S}$ 
all the coefficients of the polynomial~$F$ are $v$-adic integers.

Now fix ${v\in M_K\smallsetminus S}$ and let ${P\in X(K_v)}$ be such that 
$$
|t(P)|_v<1, \qquad |u(P)|_v<1.
$$
Set ${\tau=t(P)}$. Since ${|\tau|_v<1}$ and the coefficients of the polynomial~$F$ are $v$-adic integers,~\eqref{ezero} and~\eqref{enzero} imply that 
\begin{equation}
\label{ehensel}
|F(\tau,0)|_v<1, \qquad |F'_U(\tau,0)|_v=1. 
\end{equation}
Furthermore, since  ${|\tau|_v<1}$ and the coefficients~$a_k$ are $v$-adic integers,  the series ${\sum_{k=\nu}^\infty a_k \tau^k}$ converges in $K_v$ to a sum that we denote by~$\omega$. Since ${\nu\ge 1}$, we have ${|\omega|_v<1}$, and since  ${F(t,\sum_{k=\nu}^\infty a_k t^k)=0}$, we have ${F(\tau,\omega)=0}$. On the other hand, 
${F(\tau, u(P))= F(t(P),u(P))=0}$
as well. 

Thus, both~$\omega$ and~$u(P)$ are roots of the polynomial $F(\tau,U)$. 
However,  Hensel's lemma implies that, in view of~\eqref{ehensel}, 
this polynomial may have only one root of $v$-adic norm strictly smaller than~$1$. Hence ${u(P)=\omega}$, proving the proposition. \qed

\subsection{Proof of Theorem~\ref{tpuis}} 
It is an easy consequence of Proposition~\ref{ppuis}. We may assume that 
$$
t(A)=u(A)=0
$$
and ${v_A(t)=1}$. Then ${v_A(u)=\nu\ge 1}$. 
Let ${\sum_{k=\nu}^\infty a_k t^k}$  be the Puiseux expansion of~$u$ at~$A$, and let~$S$ be as in Proposition~\ref{ppuis}. Enlarging~$S$, we may assume that ${|a_\nu|_v=1}$ for ${v\in M_K\smallsetminus S}$. 

{\sloppy

Now fix ${v\in M_K\smallsetminus S}$. Then for any ${P\in X(K_v)}$ satisfying ${|t(P)|_v<1}$ and ${|u(P)|_v<1}$ we have ${u(P)=\sum_{k=\nu}^\infty a_k t(P)^k}$. Since ${|a_\nu|_v=1}$, we obtain ${|u(P)|_v=|t(P)|_v^\nu}$, whence the result. \qed

}

\section{Polynomials over Complete Fields}

In this section we collect results, mainly well-known, on polynomials over complete field. 

\subsection{Roots of Polynomials and Power Series}

Let~$K$ be a field of characteristic~$0$, complete with respect to a non-archimedean absolute value ${|\cdot|}$. 
Let ${f(T)\in K [T]}$ be a polynomials having a root ${\alpha \in K}$ of multiplicity~$e$. It is well-known that if ${g(T)\in K[T]}$ is another polynomial of the same degree, ``sufficiently close'' to~$f$, then ${g(T)}$ has~$e$ roots in~$\bar K$ ``close'' to~$\alpha$; see Proposition~7.1 in \cite[Chapter~XII]{La02} as an example of such a statement.

We need a precise form of this statement. For a polynomial 
$$
f(T)=a_nT^n+\cdots +a_0\in K[T]
$$ 
we use notation 
$$
|f|=\max\{|a_0|, \ldots, |a_n|\}. 
$$
We extend the absolute value from~$K$ to its algebraic closure~$\bar K$. 

\begin{theorem}
\label{troots}
Let ${f(T)=a_nT^n+\cdots +a_0\in K[T]}$ be a polynomial of degree~$n$ having pairwise distinct roots ${\alpha_1, \ldots, \alpha_s\in K}$ of multiplicities ${e_1, \ldots, e_s}$, respectively, and no other roots in~$\bar K$ (so that ${e_1+\cdots+e_s=n)}$. Assume that
\begin{align}
\label{erootsint}
|f|=|a_n|&=1;\\
\label{erootsapart}
|\alpha_i-\alpha_j|&=1 &&(1\le i<j\le s);\\
\label{ederiv}
\left|\frac{f^{(e_i)}(\alpha_i)}{e_i!}\right|&=1&&(1\le i\le s).
\end{align}
Let ${g(T)\in K[T]}$ be another polynomial of degree~$n$ satisfying ${|f-g|<1}$. Then the set of roots of~$g$ in~$\bar K$ splits into disjoint sets ${B_1, \ldots, B_s}$, such that every~$B_i$ has exactly~$e_i$ roots counted with multiplicities, and every ${\beta\in B_i}$ satisfies 
$$
|\beta-\alpha_i|<1, \qquad |\beta-\alpha_j|=1 \quad (i\ne j). 
$$
\end{theorem}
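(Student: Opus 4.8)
The plan is to use a continuity/deformation argument built on the non-archimedean factorization of $f$, controlled by Newton polygon or Hensel-type estimates. First I would use the hypotheses to pin down the coefficients of $f$ precisely: the normalization~\eqref{erootsint} says $f$ is a unit polynomial (all coefficients integral, leading one a unit), and~\eqref{erootsapart} together with~\eqref{ederiv} says that, after the substitution $T\mapsto T+\alpha_i$, the polynomial $f(T+\alpha_i)$ has a factor $T^{e_i}$ with unit $e_i$-th coefficient and all other roots of absolute value $1$; equivalently $f(T+\alpha_i)\equiv T^{e_i} h_i(T)\pmod{\mathfrak m}$ for a polynomial $h_i$ with $h_i(0)$ a unit, where $\mathfrak m$ is the maximal ideal of the valuation ring. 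This is the local picture of a root cluster of size $e_i$ that is ``$v$-adically isolated'' from the others.

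Next I would run Hensel's lemma in its strong form for factorizations (e.g.\ as in Lang's \emph{Algebra}, Ch.~XII, or the Weierstrass-preparation / Newton-polygon statement): since the reductions mod $\mathfrak m$ of the candidate factors $T^{e_i}$ (resp.\ the complementary factor of degree $n-e_i$) are coprime in the residue field $\bar k[T]$, any polynomial $g$ with $|f-g|<1$ has the same reduction mod $\mathfrak m$ as $f$, hence admits a factorization $g = g_1\cdots g_s$ over $K$ (or over the valuation ring) with $g_i$ monic-up-to-units of degree $e_i$ and $g_i(T+\alpha_i)$ reducing to $T^{e_i}$. Concretely, after translating by $\alpha_i$ I would apply the Weierstrass preparation / Hensel splitting to $g(T+\alpha_i)$ along the Newton polygon: the segment of slope $>0$ of length $e_i$ splits off a factor all of whose roots $\gamma$ satisfy $|\gamma|<1$, i.e.\ the corresponding roots $\beta=\gamma+\alpha_i$ of $g$ satisfy $|\beta-\alpha_i|<1$; the remaining roots have $|\gamma|=1$, so for them $|\beta-\alpha_i|=1$. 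Counting degrees, exactly $e_i$ roots of $g$ (with multiplicity) fall in the cluster $B_i$ around $\alpha_i$, and $\sum e_i = n$ accounts for all roots of $g$.

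Finally I would check the cross-estimates: for $\beta\in B_i$ and $j\ne i$, write $\beta-\alpha_j = (\beta-\alpha_i)+(\alpha_i-\alpha_j)$; since $|\beta-\alpha_i|<1$ and $|\alpha_i-\alpha_j|=1$ by~\eqref{erootsapart}, the ultrametric inequality gives $|\beta-\alpha_j|=1$. This also shows the sets $B_i$ are pairwise disjoint (a $\beta$ cannot be within distance $<1$ of two different $\alpha_i$'s, as the $\alpha_i$'s are at distance exactly $1$ from each other), so the splitting of the root set of $g$ is well-defined and has the asserted properties.

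The main obstacle I anticipate is making the Hensel/Newton-polygon splitting rigorous and uniform: one must verify that the hypotheses~\eqref{erootsint}--\eqref{ederiv} are exactly what is needed for the reductions of the local factors $T^{e_i}$ to be pairwise coprime mod $\mathfrak m$ (so that a single application of Hensel lifts the whole factorization), and one must translate the analytic ``$e_i$ roots near $\alpha_i$'' into the algebraic factorization cleanly — in particular handling multiplicities and the fact that $g$ need not be monic (the leading coefficient is only a unit, by $|f-g|<1$ and~\eqref{erootsint}). Once the coprime-reduction observation is in place, the rest is a routine unravelling of the ultrametric inequalities.
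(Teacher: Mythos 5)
Your argument is correct, and it reaches the same conclusion by a route that is close in spirit but not the same as the paper's. The paper first derives, for an arbitrary root $\beta$ of $g$, the identity
$$
\prod_{i=1}^s|\beta-\alpha_i|^{e_i}=|f(\beta)|=|f(\beta)-g(\beta)|\le |f-g|<1
$$
(using~\eqref{erootsint} to guarantee $|\beta|\le 1$), which at once places every root of $g$ strictly inside the unit disc around some $\alpha_i$, with uniqueness of that $\alpha_i$ supplied by~\eqref{erootsapart}; this \emph{defines} the partition $B_1,\dots,B_s$. It then translates to $\alpha_i=0$ and bounds $|B_i|\le e_i$ by invoking Corollary~\ref{cstras} (the Strassmann-type statement: the number of zeros in the open unit disc is at most the smallest index at which the maximal coefficient size is attained), and equality follows from $\sum e_i=n$. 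You instead translate by $\alpha_i$ and read off the exact count $e_i$ from the Newton polygon of $g(T+\alpha_i)$: hypotheses~\eqref{erootsint}--\eqref{ederiv} and $|f-g|<1$ force $v(b'_k)>0$ for $k<e_i$ while $v(b'_{e_i})=v(b'_n)=0$, so the polygon has $e_i$ negative-slope steps and then runs flat, giving $e_i$ roots with $|\gamma|<1$ and $n-e_i$ with $|\gamma|=1$; disjointness of the clusters and $\sum e_i=n$ then finish the proof. For polynomials the Newton polygon and the paper's Strassmann corollary encode essentially the same information, so the two arguments are equivalent in substance; your route avoids the product identity and yields ``exactly $e_i$'' in one stroke, while the paper's route is organized around the Strassmann machinery it already develops for other purposes. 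One small remark: the opening appeal to a global Hensel factorization $g=g_1\cdots g_s$ is more than is needed — the per-cluster Newton-polygon count together with the ultrametric disjointness, which your final paragraph supplies, already gives everything.
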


The proof relies on the famous theorem of Strassmann on zeros of power series in complete fields. We will use this theorem only for polynomials, but we state it for power series, in its full strength. 

Thus, let ${f(T)=\sum_{k=0}^\infty a_kT^k\in K[[T]]}$ be a formal power series over a non-archimedean complete field~$K$, whose coefficients satisfy ${|a_k|\to0}$ as ${k\to\infty}$. Then~$f$ defines an analytic function on the closed disc ${\OO=\{\alpha\in K:|\alpha|\le 1\}}$. 

\begin{theorem}[Strassmann]
\label{tstras}
Set 
$$
A=\max\{|a_k|: k=0,1,2,\ldots\}, \qquad \kappa_{\max}=\max\{k: |a_k|=A\}.
$$
Then $f(T)$ has  at most $\kappa_{\max}$ zeros ${\alpha\in K}$ with ${|\alpha|\le 1}$. 
\end{theorem}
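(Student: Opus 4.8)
The plan is to prove Strassmann's theorem by induction on the integer $\kappa_{\max}$, where the inductive step consists of dividing the power series by a linear factor inside the ring of convergent power series on $\OO$.

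First I would handle the base case $\kappa_{\max}=0$. Here $|a_0|=A$ while $|a_k|<A$ for all $k\ge 1$, so for every $\alpha\in K$ with $|\alpha|\le 1$ the ultrametric inequality gives $|a_k\alpha^k|\le|a_k|<|a_0|$ when $k\ge 1$, whence $|f(\alpha)|=|a_0|=A\ne 0$. Thus $f$ has no zero in $\OO$, and the bound $0\le\kappa_{\max}$ holds trivially.

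For the inductive step, assume the statement for series whose ``Strassmann index'' is $\kappa_{\max}-1\ge 0$. If $f$ has no zero in $\OO$ there is nothing to prove, so fix a zero $\alpha\in K$ with $|\alpha|\le 1$ and set
$$
b_k=\sum_{j=k+1}^{\infty}a_j\,\alpha^{\,j-k-1}\qquad(k\ge 0).
$$
Since $|\alpha|\le 1$ and $|a_j|\to 0$, each of these series converges in the complete field $K$ (this is where completeness is used, together with the convergence of $f(\alpha)$), and $|b_k|\le\max_{j>k}|a_j|\to 0$, so in particular $|b_k|\le A$ for all $k$. Writing $g(T)=\sum_{k\ge 0}b_kT^k$, a telescoping computation with the two series for $(T-\alpha)g(T)$ shows that the coefficient of $T^k$ in $(T-\alpha)g(T)$ equals $b_{k-1}-\alpha b_k=a_k$ for $k\ge 1$ (with the convention $b_{-1}=0$), and equals $-\alpha b_0=a_0-f(\alpha)=a_0$ for $k=0$ — this last identity being where the hypothesis $f(\alpha)=0$ enters. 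Hence $f(T)=(T-\alpha)g(T)$ as convergent power series on $\OO$, and consequently every $\beta\in K$ with $|\beta|\le 1$ and $f(\beta)=0$ is either $\beta=\alpha$ or a zero of $g$ in $\OO$.

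It remains to pin down the leading index of $g$. From $a_k=b_{k-1}-\alpha b_k$ we get $|a_k|\le\max(|b_{k-1}|,|b_k|)$, so $A=\max_k|a_k|\le\max_k|b_k|\le A$, i.e. $\max_k|b_k|=A$. Put $m=\kappa_{\max}$, so that $|a_m|=A$ and $|a_j|<A$ for $j>m$. For $k\ge m$ every term $a_j\alpha^{\,j-k-1}$ occurring in $b_k$ has $j>m$, hence $|b_k|<A$; whereas for $k=m-1$ we have $b_{m-1}=a_m+\sum_{j>m}a_j\alpha^{\,j-m}$ with $|a_m|=A$ strictly dominating the tail, so $|b_{m-1}|=A$. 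Therefore $\max_k|b_k|=A$ is attained at the maximal index $m-1$, i.e. the Strassmann index of $g$ is $\kappa_{\max}-1$. By the inductive hypothesis $g$ has at most $\kappa_{\max}-1$ zeros in $\OO$, and combined with the factorization this gives at most $1+(\kappa_{\max}-1)=\kappa_{\max}$ zeros of $f$ in $\OO$, completing the induction. The only step requiring genuine care is the pair of claims that make the induction descend cleanly — the formal identity $f=(T-\alpha)g$ and the \emph{sharp} equality $|b_{m-1}|=A$ rather than merely $|b_{m-1}|\le A$ — but both reduce to routine ultrametric estimates, so I expect no serious obstacle.
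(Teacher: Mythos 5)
Your proof is correct and takes essentially the same route as the paper: induction on $\kappa_{\max}$, removing one zero per step. The only difference is that you divide by $(T-\alpha)$ directly and verify the coefficient estimates explicitly, whereas the paper first translates so that $\alpha=0$ (asserting that this preserves $\kappa_{\max}$) and then divides by $T$; your explicit computation of the $b_k$ in effect supplies the detail the paper leaves as ``easy to see''.
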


The proof is well-known, by induction in~$\kappa_{\max}$. If ${\kappa_{\max}=0}$ then  $f(T)$ clearly does not vanish on~$\OO$. Now assume that ${\alpha\in \OO}$ is a zero of~$f$. It is easy to see that replacing $f(T)$ by ${f(\alpha+T)}$ does not alter the value of $\kappa_{\max}$; hence we may assume ${\alpha=0}$. It follows that ${a_0=0}$, and we reduce the statement for $f(T)$ to that for ${T^{-1}f(T)=a_1+a_2T+\ldots}$, reducing $\kappa_{\max}$  by~$1$. 

Here is a useful complement to Strassmann's theorem. 

\begin{corollary}
\label{cstras}
Set 
${\kappa_{\min}=\min\{k: |a_k|=A\}}$. 
Then $f(T)$ has  at most $\kappa_{\min}$ zeros ${\alpha\in K}$ with ${|\alpha|< 1}$. 
\end{corollary}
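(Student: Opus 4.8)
The plan is to deduce the statement from Strassmann's theorem (Theorem~\ref{tstras}) by a rescaling trick that converts the open disc ${\{|\alpha|<1\}}$ into the closed disc ${\{|\alpha|\le 1\}}$. An alternative would be to read off both bounds from the Newton polygon of~$f$, the quantity ${\kappa_{\max}-\kappa_{\min}}$ being the length of its horizontal segment; but the rescaling argument is shorter and uses only what has already been stated.

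\emph{Step 1.} Fix any ${c\in K^\times}$ with ${|c|<1}$ and apply Theorem~\ref{tstras} to the series ${f(cT)=\sum_{k\ge 0}a_kc^kT^k}$, whose coefficients still tend to~$0$. Put ${A'=\max_k|a_kc^k|}$ and ${\kappa'_{\max}=\max\{k:|a_kc^k|=A'\}}$. On one hand, ${A'\ge|a_{\kappa_{\min}}|\,|c|^{\kappa_{\min}}=A|c|^{\kappa_{\min}}}$. On the other hand, for every ${k>\kappa_{\min}}$ we have ${|a_kc^k|\le A|c|^k<A|c|^{\kappa_{\min}}\le A'}$, because ${0<|c|<1}$. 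Hence $A'$ is attained only at indices ${k\le\kappa_{\min}}$, so ${\kappa'_{\max}\le\kappa_{\min}}$, and Theorem~\ref{tstras} tells us that $f(cT)$ has at most $\kappa_{\min}$ zeros in~$K$ of absolute value at most~$1$. Since ${T\mapsto cT}$ is a bijection of~$K$ that respects multiplicities, $f$ itself has at most $\kappa_{\min}$ zeros ${\alpha\in K}$ with ${|\alpha|\le|c|}$.

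\emph{Step 2.} Suppose, for contradiction, that $f$ has ${\kappa_{\min}+1}$ zeros ${\alpha_1,\dots,\alpha_{\kappa_{\min}+1}\in K}$ with ${|\alpha_i|<1}$, counted as in Theorem~\ref{tstras}. These cannot all be~$0$: since ${|a_{\kappa_{\min}}|=A\ne 0}$, the point $0$ is a zero of $f$ of multiplicity at most ${\kappa_{\min}}$, so at least one ${\alpha_i}$ is nonzero. Choose ${i_0}$ with ${|\alpha_{i_0}|}$ maximal and set ${c=\alpha_{i_0}}$; then ${c\in K^\times}$, ${|c|<1}$, and ${|\alpha_i|\le|c|}$ for all~$i$. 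Thus $f$ has at least ${\kappa_{\min}+1}$ zeros in~$K$ of absolute value at most~$|c|$, contradicting Step~1. This proves the corollary.

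\emph{Main obstacle.} There is no serious difficulty; the one point requiring care is that $K$ need not be discretely valued, so one cannot simply ``scale by a uniformizer'' of absolute value just below~$1$ and then take a union over such uniformizers. This is what forces the two-step formulation: in Step~2 the scaling factor~$c$ is taken to be one of the hypothetical zeros themselves, which always lies in the value group. One should also read the word ``zeros'' consistently (with multiplicities) in the corollary and in Theorem~\ref{tstras}; this is automatic because the substitution ${T\mapsto cT}$ preserves multiplicities.
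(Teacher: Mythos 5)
Your proof is correct and takes essentially the same approach as the paper: rescale the series by the zero of maximal absolute value in the open unit disc, observe that the resulting series attains its maximal coefficient size only at indices $\le \kappa_{\min}$, and invoke Theorem~\ref{tstras}. Your two-step formulation is a bit more explicit than the paper's and carefully rules out the degenerate case where all zeros in the open disc are $0$ (which the paper glosses over), but the core rescaling idea is identical.
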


\paragraph{Proof}
We may assume that the set of zeros ${\alpha\in K}$ with ${|\alpha|< 1}$ is non-empty: otherwise there is nothing to prove. Since this set is finite by Theorem~\ref{tstras}, it has an element~$\theta$ of maximal absolute value:
$$
|\theta|=\max\{|\alpha|: f(\alpha)=0,\ |\alpha|<1\}. 
$$
Then we have to count zeros in~$\OO$ of the function $f(\theta T)$. Since ${|\theta|<1}$, the value of $\kappa_{\max}$ for the series $f(\theta T)$ does not exceed the value of $\kappa_{\min}$ for the series $f(T)$. Hence the result follows by Theorem~\ref{tstras}. \qed 

\paragraph{Proof of Theorem~\ref{troots}} We write
${ g(T)=b_nT^n+\cdots +b_0}$. 
Extending the field~$K$, we may assume that all the roots of~$g$ belong to~$K$ as well. Condition~\eqref{erootsint} implies that all roots of~$f$ are of absolute value at most~$1$. Since ${|f-g|<1}$,  we have  ${|g|=|b_n|=1}$, and the roots of~$g$ are of absolute value at most~$1$ as well.  

Let us show first of all that for every root~$\beta$ of~$g$ there exists a unique root~$\alpha_i$ of~$f$ such that ${|\beta-\alpha_i|<1}$. Indeed, uniqueness follows from~\eqref{erootsapart}, and existence from
$$
\prod_{i=1}^s|\beta-\alpha_i|^{e_i}=|f(\beta)|=|f(\beta)-g(\beta)|\le |f-g|<1. 
$$
This already defines the partition ${B_1\cup\ldots\cup B_s}$ on the set of roots of~$g$, and we only need to show that each  $B_i$ contains exactly $e_i$ roots (counted with multiplicities). Moreover, since ${n=e_1+\cdots+e_s}$  is the total number of roots of~$g$, it is sufficient to show that~$B_i$ contains \textsl{at most}~$e_i$ roots.

Thus, fix~$\alpha_i$ and omit index~$i$ in the sequel. We want to show that~$g$ has at most~$e$ roots~$\beta$ satisfying ${|\beta-\alpha|<1}$. Replacing $f(T)$ and $g(T)$ by ${f(\alpha+T)}$ and ${g(\alpha+T)}$, we may assume that ${\alpha=0}$. Thus,
${f(T)=a_eT^e+\cdots+a_nT^n}$ with ${|a_e|=1}$ by~\eqref{ederiv}.  Then for the coefficients of $g(T)$ we have ${|b_k|<1}$ for ${k<e}$ and ${|b_e|=1}$. By Corollary~\ref{cstras}, the polynomial~$g$ has at most~$e$ roots~$\beta$ satisfying ${|\beta|<1}$. This completes the proof. \qed

\bigskip

Here is a consequence for number fields. 

\begin{corollary}
\label{croots}
Let~$K$ be a number field and ${f(T)\in K[T]}$ a polynomial of degree~$n$ having a root ${\alpha\in K}$ of order~$e$. 
Then there exists a finite set ${S\subset M_K}$ (containing all the infinite places), such that for every ${v\in M_K\smallsetminus S}$ the following holds. Let ${g(T)\in K_v[T]}$ be a polynomial of degree~$n$ satisfying ${|f-g|_v<1}$. Then~$g$ has exactly~$e$ roots ${\beta\in\overline{K_v}}$ satisfying ${|\beta-\alpha|_v<1}$.
\end{corollary}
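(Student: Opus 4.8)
The plan is to deduce Corollary~\ref{croots} from Theorem~\ref{troots} by choosing the finite exceptional set~$S$ so that, outside it, the polynomial~$f$ satisfies the three normalization hypotheses \eqref{erootsint}--\eqref{ederiv} $v$-adically. First I would factor ${f(T)=a_n\prod_{i=1}^s(T-\alpha_i)^{e_i}}$ over~$\bar K$, where ${\alpha=\alpha_1}$ and ${e=e_1}$; after enlarging~$K$ we may assume all~$\alpha_i$ lie in~$K$, but it is cleaner to work over the field ${L=K(\alpha_1,\ldots,\alpha_s)}$ and pull back places. The leading coefficient~$a_n$, the differences ${\alpha_i-\alpha_j}$ for ${i\neq j}$, and the quantities ${f^{(e_i)}(\alpha_i)/e_i!=a_n\prod_{j\neq i}(\alpha_i-\alpha_j)^{e_j}}$ are all nonzero elements of~$L$; hence for all but finitely many places~$w$ of~$L$ they are $w$-adic units. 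Taking~$S$ to be the (finite) set of places of~$K$ lying below a place of~$L$ where this fails, together with all the infinite places and the places where~$f$ itself does not have $v$-integral coefficients, ensures that for ${v\notin S}$ (and any place~$w$ of~$L$ above~$v$) conditions \eqref{erootsint}, \eqref{erootsapart}, \eqref{ederiv} hold with ${|\cdot|}$ taken to be the $w$-adic absolute value.

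The second step is to invoke Theorem~\ref{troots} verbatim over the complete field~${K_v}$ (or rather~$L_w$, which contains~$K_v$): given ${g(T)\in K_v[T]}$ of degree~$n$ with ${|f-g|_v<1}$, the roots of~$g$ in~$\overline{K_v}$ split as ${B_1\cup\cdots\cup B_s}$ with ${\#B_i=e_i}$ counted with multiplicity, and every ${\beta\in B_i}$ satisfies ${|\beta-\alpha_i|_v<1}$ while ${|\beta-\alpha_j|_v=1}$ for ${j\neq i}$. In particular the roots counted by "${|\beta-\alpha|_v<1}$" are exactly those in~$B_1$, of which there are exactly ${e_1=e}$; roots in the other blocks are at distance exactly~$1$ from~$\alpha$ because ${|\alpha-\alpha_j|_v=1}$ and ${|\beta-\alpha_j|_v<1}$ force ${|\beta-\alpha|_v=1}$ by the ultrametric inequality. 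This gives the claimed count.

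The only genuine subtlety is bookkeeping about the field of definition: Theorem~\ref{troots} is stated for a complete field over which the~$\alpha_i$ are already rational, so one must either extend~$K_v$ to a finite extension containing all~$\alpha_i$ and check that the normalizations \eqref{erootsint}--\eqref{ederiv} are insensitive to this (they are, since they only involve absolute values of fixed algebraic numbers, and the chosen~$S$ was defined via places of~$L$ precisely to guarantee this), or note that the hypothesis ${|f-g|_v<1}$ with ${g\in K_v[T]}$ is preserved under base change. I expect this descent/ascent of the place to be the main point to get right; everything else is a direct citation. I would also remark that~$S$ depends only on~$K$, $f$, $\alpha$ (through the splitting field~$L$ and the finitely many units above), matching the statement.

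\begin{proof}
We may assume that ${e<n}$, otherwise the statement is trivial. Write ${f(T)=a_n\prod_{i=1}^{s}(T-\alpha_i)^{e_i}}$ with ${\alpha_1=\alpha}$, ${e_1=e}$ and ${\alpha_1,\ldots,\alpha_s\in\bar K}$ pairwise distinct, and put ${L=K(\alpha_1,\ldots,\alpha_s)}$. The elements
$$
a_n,\qquad \alpha_i-\alpha_j\ (i\neq j),\qquad \frac{f^{(e_i)}(\alpha_i)}{e_i!}=a_n\prod_{j\neq i}(\alpha_i-\alpha_j)^{e_j}
$$
are all nonzero elements of~$L$, hence are $w$-adic units for all but finitely many ${w\in M_L^0}$. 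Let ${S\subset M_K}$ be the union of~$M_K^\infty$, the set of ${v\in M_K^0}$ below which some coefficient of~$f$ fails to be $v$-integral, and the set of ${v\in M_K^0}$ below some ${w\in M_L^0}$ at which one of the displayed elements fails to be a unit. Then~$S$ is finite and depends only on~$K$, $f$ and~$\alpha$.

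Fix ${v\in M_K\smallsetminus S}$, let ${w\in M_L}$ lie above~$v$, and view~$f$ and~$g$ as polynomials over the complete field ${L_w\supseteq K_v}$, with ${|\cdot|=|\cdot|_w}$. By the choice of~$S$ we have ${|f|=|a_n|=1}$, ${|\alpha_i-\alpha_j|=1}$ for ${i\neq j}$, and ${|f^{(e_i)}(\alpha_i)/e_i!|=1}$ for all~$i$, so hypotheses \eqref{erootsint}--\eqref{ederiv} of Theorem~\ref{troots} hold; moreover ${|f-g|=|f-g|_v<1}$ since ${w\mid v}$. Theorem~\ref{troots} now yields a partition ${B_1\cup\cdots\cup B_s}$ of the multiset of roots of~$g$ in ${\bar K_v=\overline{L_w}}$ with ${\#B_i=e_i}$, such that ${|\beta-\alpha_i|_v<1}$ for ${\beta\in B_i}$ and ${|\beta-\alpha_j|_v=1}$ for ${\beta\in B_i}$, ${j\neq i}$.

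It remains to identify the roots ${\beta}$ with ${|\beta-\alpha|_v<1}$. If ${\beta\in B_1}$ then ${|\beta-\alpha|_v=|\beta-\alpha_1|_v<1}$. If ${\beta\in B_i}$ with ${i\neq 1}$, then ${|\beta-\alpha_i|_v<1}$ and ${|\alpha-\alpha_i|_v=|\alpha_1-\alpha_i|_v=1}$, so by the ultrametric inequality ${|\beta-\alpha|_v=1}$. Hence the roots of~$g$ in~$\bar K_v$ satisfying ${|\beta-\alpha|_v<1}$ are exactly those in~$B_1$, and there are ${e_1=e}$ of them, counted with multiplicities. This proves the corollary. \qed
\end{proof}
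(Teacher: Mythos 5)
Your proof is correct and follows essentially the same route as the paper: reduce to the splitting field ${L=K(\alpha_1,\ldots,\alpha_s)}$, discard the finitely many places where the normalizations \eqref{erootsint}--\eqref{ederiv} fail or the coefficients are not integral, and apply Theorem~\ref{troots} over the completion. The paper states this in two lines (``extending~$K$, we may assume that all roots of~$f$ belong to~$K$''), while you carry out the place bookkeeping explicitly; one trivial remark is that the restriction ${e<n}$ is unnecessary, since Theorem~\ref{troots} already covers ${s=1}$ (condition~\eqref{erootsapart} is then vacuous), so the ``trivial case'' need not be split off.
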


\paragraph{Proof}
If the statement holds true with~$K$ replaced by some finite extension, then it is true for~$K$ as well. Thus, extending~$K$, we may assume that all roots of~$f$ belong to~$K$. And in this case the statement is an immediate consequence of Theorem~\ref{troots}. \qed

\subsection{Ramification}

Now assume that~$K$ is a non-archimedean  local  field of characteristic~$0$; we denote by ${|\cdot|}$ its absolute value  and by ${\OO=\{\alpha \in K: |\alpha|\le 1\}}$ its local ring. 

The following property is well-known (at least when the polynomial $f(T)$ is irreducible), but we include the proof for the reader's convenience. 

\begin{proposition}
\label{pderiv}
Let ${f(T)\in \OO[T]}$ be a monic polynomial and ${\alpha \in \bar K}$ a root of~$f$ such the field $K(\alpha)$ is ramified over~$K$. Then ${|f'(\alpha)|<1}$.
\end{proposition}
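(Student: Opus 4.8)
The statement is that if $f \in \OO[T]$ is monic and $\alpha \in \bar K$ is a root of $f$ generating a ramified extension $K(\alpha)/K$, then $|f'(\alpha)| < 1$. The plan is to reduce to the minimal polynomial of $\alpha$ and exploit the fact that a ramified extension has a uniformizer of strictly smaller absolute value than that of the base, so that the Newton polygon / derivative of an Eisenstein-type minimal polynomial forces cancellation.

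\textbf{Key steps.} First I would let $g(T) \in \OO[T]$ be the minimal (monic) polynomial of $\alpha$ over $K$; since $f \in \OO[T]$ is monic and $g \mid f$ in $K[T]$, Gauss's lemma gives $f = g h$ with $h \in \OO[T]$ monic, and then $f'(\alpha) = g'(\alpha) h(\alpha)$. Because $\alpha$ is integral over $\OO$, $|h(\alpha)| \le 1$, so it suffices to prove $|g'(\alpha)| < 1$; that is, I may assume $f = g$ is irreducible. Write $m = \deg g = [K(\alpha):K]$ and let $e = e(K(\alpha)/K) > 1$ be the ramification index (this is where the hypothesis enters). Next I recall the standard description: the roots $\alpha = \alpha_1, \dots, \alpha_m$ of $g$ are the conjugates of $\alpha$, all of the same absolute value $|\alpha_i| = |\alpha|$, and $g'(\alpha) = \prod_{j \ne 1}(\alpha - \alpha_j)$. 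Since $\alpha$ is a unit or a non-unit in $\OO_{K(\alpha)}$ depending on $|\alpha|$, I would split into cases or, more cleanly, translate: the key inequality is invariant enough that I can reduce to the case where $\alpha$ is a uniformizer of $K(\alpha)$, i.e.\ $g$ is Eisenstein. Indeed, if $|\alpha| = 1$ one replaces $\alpha$ by $\alpha - a$ for a suitable $a \in \OO$ reducing to a non-unit residue, noting that ramified extensions always contain such an element; if already $|\alpha|<1$ but $\alpha$ is not a uniformizer the argument below still applies via the Newton polygon. In the Eisenstein case $g(T) = T^m + c_{m-1}T^{m-1} + \cdots + c_0$ with $|c_i| < 1$ and $|c_0| = |\pi_K|$ (uniformizer), $v(\alpha) = 1/e$ (in the valuation normalized so $v(K^\times) = \Z$), and $m = e$. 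Then $g'(\alpha) = e \alpha^{e-1} + (e-1)c_{e-1}\alpha^{e-2} + \cdots + c_1$; each term $j c_j \alpha^{j-1}$ has valuation $v(c_j) + (j-1)/e \ge 1/e > 0$ for $1 \le j \le e-1$ (using $v(c_j) \ge 1$ when $j \ge 1$, in fact $v(c_j)\geq 1$ hence that term has valuation $\geq 1$), while the leading term $e\alpha^{e-1}$ has valuation $v(e) + (e-1)/e \ge (e-1)/e > 0$. Hence every term of $g'(\alpha)$ has strictly positive valuation, so $|g'(\alpha)| < 1$, as desired.

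\textbf{Main obstacle.} The delicate point is handling the reduction to the Eisenstein case cleanly, i.e.\ not assuming $\alpha$ is already a uniformizer of $K(\alpha)$. A tidy way around this: do not reduce to Eisenstein at all, but argue directly on the Newton polygon of the minimal polynomial $g$. The fact that $K(\alpha)/K$ is ramified means the Newton polygon of $g$ has a segment of slope with denominator $> 1$ (equivalently, some slope is not an integer), and from the Newton-polygon description of $v(g'(\alpha)) = \sum_{j\ne 1} v(\alpha - \alpha_j)$ together with the ultrametric inequality one reads off that at least one difference $\alpha - \alpha_j$ satisfies $v(\alpha - \alpha_j) > 0$ while none satisfies $v(\alpha-\alpha_j) < 0$, forcing $v(g'(\alpha)) > 0$. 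Concretely: if all $|\alpha - \alpha_j| = |\alpha|$ and $|\alpha|$ equals the common absolute value but $e>1$ then... — one must verify no conjugate difference has absolute value exceeding $1$, which follows since all $\alpha_j \in \OO_{\bar K}$, and that at least one has absolute value $<1$, which is exactly the assertion that $\OO_{K(\alpha)}/\OO$ is not unramified, i.e.\ the residue extension being separable plus $e>1$ forces some pair of conjugates to be congruent mod the maximal ideal. I expect the cleanest writeup to lean on Proposition~\ref{troots}-style reasoning or the explicit Eisenstein computation above rather than abstract Newton polygons, since the paper's style is hands-on; the Eisenstein computation is short and self-contained once the reduction $f \rightsquigarrow g \rightsquigarrow$ Eisenstein is justified, and that justification (a ramified local extension of a local field of characteristic $0$ is monogenic and its generator can be taken to be a uniformizer) is standard.
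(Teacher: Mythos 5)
Your reduction from $f$ to the minimal polynomial $g$ of $\alpha$ via Gauss's lemma is correct (and is not in the paper, which runs its argument directly on $f$), but the proposed passage to the Eisenstein case has a genuine gap. Replacing $\alpha$ by $\alpha - a$ with $a \in \OO$ can produce an element of positive valuation only if the residue $\bar\alpha$ already lies in the residue field $\bar K$, i.e.\ only if the residue extension $\overline{K(\alpha)}/\bar K$ is trivial. If $K(\alpha)/K$ is ramified but not totally ramified and $\alpha$ is a primitive element whose residue generates the nontrivial residue extension, then $|\alpha - a| = 1$ for every $a \in \OO$, and no translation makes $g$ Eisenstein; so the statement ``ramified extensions always contain such an element'' does not justify the step as written. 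The fact you invoke as standard --- that a ramified extension of local fields admits a uniformizer that generates the whole extension --- is true but is not obtained by translating $\alpha$, and it would need its own argument. Likewise, the claim that ramification forces the Newton polygon of $g$ to have a slope with denominator $>1$ is false: for $\alpha = 1 + p^{1/2}$ over $\Q_p$ (with $p$ odd) the minimal polynomial $T^2 - 2T + (1-p)$ has Newton polygon of slope $0$, yet $e = 2$.

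Your fallback residue argument at the end is, however, essentially correct and can be made precise in a self-contained way: since $g$ is irreducible over $K$, Hensel's lemma forces the reduction $\bar g$ to be a power $\bar h^{k}$ of a single monic irreducible $\bar h$ over the residue field; the degree of $\bar h$ is bounded by the residue degree $f$ of $K(\alpha)/K$, so $k \ge m/f = e > 1$, hence every root of $\bar g$ --- in particular $\bar\alpha$ --- is a repeated root, giving $\bar g'(\bar\alpha) = 0$ and $|g'(\alpha)| < 1$. This is a genuinely different route from the paper's: the paper assumes $|f'(\alpha)| = 1$ for contradiction, lifts $\bar\alpha$ to an element $\theta$ of the maximal unramified subextension $L$ of $K(\alpha)$ (possible precisely because $\bar L = \overline{K(\alpha)}$), applies the existence part of Hensel's lemma to $f$ at $\theta$ to produce a root $\alpha' \in L$ with $|\alpha' - \theta| < 1$, and then invokes the uniqueness part to force $\alpha = \alpha' \in L$, contradicting ramification. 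Both proofs rest on the same structural fact --- $\bar\alpha$ already lives in the residue field of the unramified part --- but the paper's packaging via Hensel is shorter and avoids passing to the minimal polynomial, counting multiplicities, or any Eisenstein/Newton-polygon considerations.
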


\paragraph{Proof}
Note first of all that, since~$f$ is monic and has coefficients in~$\OO$, its root~$\alpha$ satisfies ${|\alpha|\le 1}$. It follows that ${|f'(\alpha)|\le 1}$. We will assume that ${|f'(\alpha)|=1}$ and obtain a contradiction.

Let~$L$ be the maximal unramified extension of~$K$ contained in $K(\alpha)$. Then there exists ${\theta\in L}$ with ${|\theta-\alpha|<1}$. Since ${f(\alpha)=0}$ and ${|f'(\alpha)|=1}$, we have ${|f(\theta)|<1}$ and ${|f'(\theta)|=1}$. 

The existence part of Hensel's Lemma implies that $f(T)$ has a root ${\alpha'\in L}$ satisfying ${|\alpha'-\theta|<1}$. The uniqueness part of Hensel's lemma implies that $f(T)$ can have at most one root in $K(\alpha)$ with this property. Hence ${\alpha=\alpha'\in L}$, a contradiction. \qed

\bigskip

We again have an immediate consequence for the number fields. 

\begin{corollary}
\label{cderiv}
Let~$K$ be a number field and ${f(T)\in K[T]}$. Let ${v\in M_K^0}$ be such that all the coefficients of~$f$ are $v$-adic integers, and the leading coefficient of~$f$ is a $v$-adic unit. Viewing~$f$ as a polynomial over~$K_v$, let ${\alpha\in \overline{K_v}}$ be its root such that the field  $K_v(\alpha)$ is ramified over~$K_v$. Then ${|f'(\alpha)|_v<1}$. 
\end{corollary}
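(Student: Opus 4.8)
The plan is to deduce this directly from Proposition~\ref{pderiv} applied over the local field~$K_v$, after a harmless normalization making~$f$ monic. Write ${f(T)=a_nT^n+\cdots+a_0}$ with leading coefficient~$a_n$, and let ${\OO_v=\{x\in K_v:|x|_v\le1\}}$ be the valuation ring of~$K_v$. By hypothesis all the~$a_i$ lie in~$\OO_v$ and ${a_n\in\OO_v^\times}$, hence ${a_n^{-1}\in\OO_v}$ as well, so the polynomial ${\tilde f(T):=a_n^{-1}f(T)}$ is monic with coefficients in~$\OO_v$.

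Next I would note that~$K_v$ is a non-archimedean local field of characteristic~$0$, so Proposition~\ref{pderiv} is applicable to it. The polynomials~$f$ and~$\tilde f$ have the same roots in~$\overline{K_v}$; in particular~$\alpha$ is a root of~$\tilde f$, and~$K_v(\alpha)$ is ramified over~$K_v$ by assumption. Applying Proposition~\ref{pderiv} to the monic polynomial ${\tilde f\in\OO_v[T]}$ and its root~$\alpha$ therefore gives ${|\tilde f'(\alpha)|_v<1}$.

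Finally, from ${\tilde f'=a_n^{-1}f'}$ and ${|a_n^{-1}|_v=1}$ I conclude ${|f'(\alpha)|_v=|\tilde f'(\alpha)|_v<1}$, as required. There is no real obstacle here: the only point worth attention is that Proposition~\ref{pderiv} demands a monic polynomial, so the passage from~$f$ to~$\tilde f$ cannot be skipped — but since~$a_n$ is a $v$-adic unit, dividing by it preserves both the $v$-integrality of the coefficients and the value ${|f'(\alpha)|_v}$, which makes the reduction immediate.
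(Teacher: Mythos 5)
Your proof is correct and matches the paper's intent exactly: the paper labels Corollary~\ref{cderiv} as an immediate consequence of Proposition~\ref{pderiv} applied over~$K_v$, and the only small point to make that precise is the normalization by the unit leading coefficient, which you handle cleanly.
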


\section{Arithmetical vs Geometric Ramification}
\label{sram}

Let~$K$ be a field of characteristic~$0$ and~$X$ a (smooth projective) algebraic curve over~$K$. Further, let ${t\in K(X)}$ be a non-constant function. For a point ${A\in X(\bar K)}$ we define the \textsl{ramification index} of~$t$ at~$A$ by 
$$
e_A=e_A(t)=v_A(t-t(A));
$$ 
We say that~$t$ is \textsl{ramified at~$A$} (and call~$A$ a \textsl{ramification point} of~$t$) if ${e_A(t)>1}$. The value ${t(A)\in \bar K\cup\{\infty\}}$ of~$t$ at a ramification point will be called a \textsl{critical value} of~$t$. (It is also often called a \textsl{branch point} of~$t$.) It is well-known that 

\begin{itemize}
\item
a non-constant rational function has at most finitely many ramification points (and critical values);

\item
if ${\bar K(t)\ne \bar K(X)}$ then~$t$ has at least~$2$ distinct critical values (a consequence of the Riemann-Hurwitz formula). 
\end{itemize}

In this section we prove three theorems linking geometric and arithmetical ramification. None of them is really new, but we did not find in the literature what we exactly need. 

{\sloppy

\begin{theorem}
\label{tram}
Let~$K$ be a number field,~$X$ a smooth projective algebraic curve over~$K$ and ${t\in K(X)}$ a non-constant $K$-rational function. Further, let 
${\alpha \in K\cup\{\infty\}}$ be a critical value of~$t$. Then there exists a finite set~$S$ of places of~$M_K$ 
such that for every ${v\in M_K\smallsetminus S}$ the following holds. Let  ${\tau\in K_v}$ be such that ${ v(\tau-\alpha)=1}$. Then there exists ${P\in X(\overline{K_v})}$ with ${t(P)=\tau}$ such that the field $K_v(P)$ is ramified over~$K_v$. 
\end{theorem}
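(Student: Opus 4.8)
The plan is to reduce Theorem~\ref{tram} to an Eisenstein-type computation on an auxiliary plane model of~$X$, the local input being Theorem~\ref{troots}. After replacing~$t$ by~$t^{-1}$ if necessary I may assume ${\alpha\ne\infty}$, and after a translation that ${\alpha=0}$; I fix a ramification point ${A\in X(\bar K)}$ with ${t(A)=0}$ and ${e:=v_A(t)\ge2}$, and put ${L:=K(A)}$. A Zariski‑genericity argument produces ${u\in K(X)}$ with ${K(X)=K(t,u)}$, with~$A$ a $(t,u)$-regular point, and with~$u$ having no pole on the fibre ${t^{-1}(0)}$; the only point requiring care is that~$A$ is only $L$-rational, so one checks that a $K$-rational~$u$ can nonetheless be chosen so as to separate~$A$ from every \emph{other} point of ${t^{-1}(0)}$, in particular from its $G_K$-conjugates. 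Let ${F(T,U)\in K[T,U]}$ be the $K$-irreducible (hence, since ${K(X)=K(t,u)}$, also $\bar K$-irreducible) polynomial with ${F(t,u)=0}$, of degree~$n$ in~$U$, and let ${a_n(T)}$ be its $U^n$-coefficient; since~$u$ is finite on ${t^{-1}(0)}$ we get ${a_n(0)\ne0}$ and ${F(0,U)=a_n(0)\prod_{P\in t^{-1}(0)}(U-u(P))}$. Set ${\beta:=u(A)\in L}$; by $(t,u)$-regularity, ${K(\beta)=K(t(A),u(A))=L}$.

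Two algebraic facts drive the argument. First, ${\beta}$ is a root of ${F(0,U)}$ of multiplicity exactly~$e$: condition~\ref{idist} of $(t,u)$-regularity for~$A$ forbids other points of ${t^{-1}(0)}$ over ${(0,\beta)}$, so ${F(0,U)=a_n(0)(U-\beta)^e h(U)}$ with ${h(\beta)\ne0}$. Second, ${F'_T(0,\beta)\ne0}$: as ${e\ge2}$ the first fact gives ${F'_U(0,\beta)=0}$, and if also ${F'_T(0,\beta)=0}$ then~$A$ would be $(t,u)$-singular by item~\ref{insig} of Proposition~\ref{pnsing}. I then enlarge~$S$ so that, for ${v\notin S}$: the coefficients of~$F$ are $v$-integral and ${a_n(0)}$, ${h(\beta)}$ and ${F'_T(0,\beta)}$ are $v$-units; the data needed for Theorem~\ref{troots} to apply to ${F(0,\cdot)}$ (in particular the pairwise differences of the roots of ${F(0,U)}$ in~$\bar K$) are $v$-units; ${L/K}$ is unramified at~$v$; and, for every ${\tau\in K_v}$ with ${v(\tau)=1}$, the value~$\tau$ is neither a critical value of~$t$ nor the $t$-image of a pole of~$u$, so that ${t^{-1}(\tau)\subset X(\overline{K_v})}$ has~$n$ distinct points and every root of ${F(\tau,U)}$ equals ${u(P)}$ for some ${P\in t^{-1}(\tau)}$.

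Now fix ${v\notin S}$ and ${\tau\in K_v}$ with ${v(\tau)=1}$, fix an embedding ${L\hookrightarrow\overline{K_v}}$, and set ${M:=K_v(\beta)=L_w}$; then~$M$ is unramified over~$K_v$, so the uniformizer~$\tau$ of~$K_v$ is also a uniformizer of~$M$. Write ${|\cdot|=|\cdot|_w}$. Every coefficient of ${F(\tau,U)-F(0,U)}$ is a multiple of~$\tau$, so ${|F(\tau,\cdot)-F(0,\cdot)|<1}$, and Theorem~\ref{troots} (applied over a finite extension of~$M$ over which ${F(0,U)}$ splits) shows that ${F(\tau,U)}$ has exactly~$e$ roots ${\gamma_1,\dots,\gamma_e}$, counted with multiplicity, with ${|\gamma_i-\beta|<1}$, while its other ${n-e}$ roots~$\rho$ satisfy ${|\rho-\beta|=1}$. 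The set ${\{\gamma_1,\dots,\gamma_e\}}$ is $G_M$-stable, so
$$
\psi(U):=\prod_{i=1}^{e}\bigl(U-(\gamma_i-\beta)\bigr)=U^e+c_{e-1}U^{e-1}+\cdots+c_0\in M[U].
$$
For ${0\le j\le e-1}$ the coefficient~$c_j$ is, up to sign, an elementary symmetric function of the ${\gamma_i-\beta}$, hence ${|c_j|<1}$; and from ${F(\tau,\beta)=a_n(\tau)\prod_i(\beta-\gamma_i)\prod_\rho(\beta-\rho)}$, in which ${a_n(\tau)}$ and every ${\beta-\rho}$ are units, one gets ${|c_0|=|F(\tau,\beta)|=|\tau|}$, the last equality because ${F(\tau,\beta)=F'_T(0,\beta)\,\tau+O(\tau^2)}$ with ${F'_T(0,\beta)}$ a unit. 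Hence~$\psi$ is Eisenstein over~$M$, so ${M(\gamma_1)/M}$ is totally ramified of degree ${e\ge2}$, in particular ramified; since ${M/K_v}$ is unramified, ${K_v(\gamma_1)/K_v}$ is ramified too. Finally ${\gamma_1=u(P)}$ for some ${P\in t^{-1}(\tau)\subset X(\overline{K_v})}$, and the residue field ${K_v(P)}$ contains ${K_v(t(P),u(P))=K_v(\gamma_1)}$, hence is ramified over~$K_v$, as required.

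The step I expect to be the crux is the exact equality ${|c_0|=|\tau|}$, i.e. that the ``level-one'' hypothesis ${v(\tau-\alpha)=1}$ is transmitted without loss to the ramification conclusion; this is exactly where $(t,u)$-regularity of~$A$ is used, via ${F'_T(0,\beta)\ne0}$, and without it~$\psi$ need not be Eisenstein. The rest — choosing~$u$ over~$K$ with the required genericity although~$A$ only lives over~$L$, and enlarging~$S$ to absorb the finitely many arithmetic exceptions — is routine bookkeeping.
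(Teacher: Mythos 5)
Your proof is correct, but it takes a genuinely different route from the paper's at the crucial step. You both reduce to the case ${\alpha=0}$, both introduce an auxiliary function~$u$ making~$A$ a $(t,u)$-regular point with~$u$ pole-free on ${t^{-1}(0)}$, and both invoke Theorem~\ref{troots} (resp.\ Corollary~\ref{croots}) to locate the~$e$ roots of ${F(\tau,U)}$ near ${\beta=u(A)}$. But where the paper first extends~$K$ so that ${A\in X(K)}$ and then feeds the resulting ${K_v}$-rational point into the Puiseux machinery of Theorem~\ref{tpuis} (itself resting on Eisenstein's theorem on Puiseux coefficients plus Hensel's lemma) to extract the exact valuation ${|u(P)|_v=|\tau|_v^{1/e}}$, you keep~$K$ fixed, pass to the unramified local extension ${M=K_v(\beta)}$, and package the nearby roots into the monic polynomial~$\psi$ whose Eisenstein property you verify directly from the Taylor expansion ${F(\tau,\beta)=F'_T(0,\beta)\,\tau+O(\tau^2)}$ and the unit-ness of ${F'_T(0,\beta)}$ (which, exactly as you say, is where $(t,u)$-regularity enters via item~\ref{insig} of Proposition~\ref{pnsig}). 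Both arguments exploit the same underlying fact---non-vanishing of the appropriate mixed/linear term of~$F$ at ${(0,\beta)}$---but you reach the conclusion through irreducibility of an Eisenstein polynomial, which is slightly more elementary and self-contained, whereas the paper's Theorem~\ref{tpuis} has the advantage of being a reusable tool elsewhere in the article. The one place that deserves the care you flag is the existence of a $K$-rational~$u$ separating~$A$ from its $G_K$-conjugates and vanishing simply at~$A$; this is indeed routine (it is a Zariski-open, non-empty condition on an appropriate Riemann--Roch space, non-empty because one may first build such a~$u$ over~$L=K(A)$ and then take a suitable $K$-linear combination of its $\mathrm{Gal}(L/K)$-twists), but it is precisely the step the paper avoids by extending the base field, and you do well to isolate it.
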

(Recall that we normalize the discrete valuation $ v(\cdot)$ 
so that  ${ v(K^\times)=\Z}$.)

}

\bigskip

Informally, the theorem says that ``geometric ramification enforces arithmetical ramification''. 

On the contrary, if $t(P)$ is $v$-adically close to a non-critical value, then  $K_v(P)$ does not ramify over~$K_v$.

{\sloppy 

\begin{theorem}
\label{tnram}
Let~$K$ be a number field,~$X$ a smooth projective algebraic curve over~$K$ and ${t\in K(X)}$ a non-constant $K$-rational function. Further, let 
${\alpha \in K\cup\{\infty\}}$ be a \textbf{not} a critical value of~$t$. Then there exists a finite set~$S$ of places of~$M_K$ 
such that for every ${v\in M_K\smallsetminus S}$ the following holds:  for any ${P\in X(\overline{K_v})}$ with ${|t(P)-\alpha|_v<1}$ the field $K_v(P)$ is unramified over~$K_v$.   
\end{theorem}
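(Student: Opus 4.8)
The plan is to reduce to a statement about polynomials over local fields and invoke Corollary~\ref{cderiv}. Since $\alpha$ is not a critical value, the fiber $t^{-1}(\alpha)$ consists of $n=\deg t$ distinct geometric points $A_1,\dots,A_n\in X(\bar K)$, at each of which $t$ is unramified (that is, $v_{A_j}(t-\alpha)=1$). Pick an auxiliary function $u\in K(X)$ that, together with $t$, generates $K(X)$ and separates these points; more precisely, after enlarging the ground field momentarily if needed (the conclusion descends, as in the proof of Corollary~\ref{croots}), we may take $u$ so that the $A_j$ are $(t,u)$-regular and $u(A_j)$ are pairwise distinct and finite, and $\alpha\neq\infty$. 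Let $F(T,U)\in K[T,U]$ be irreducible with $F(t,u)=0$, and view $F(\alpha,U)$ as a polynomial in $U$ of degree $n$ whose roots are exactly the $n$ distinct values $u(A_j)$.

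First I would assemble the finite exceptional set $S$: it should contain all infinite places, all places where $F$ fails to have $v$-integral coefficients or $v$-unit leading coefficient (in $U$), all places dividing the discriminant of $F(\alpha,U)$ or the pairwise differences $u(A_i)-u(A_j)$, the places where $\alpha$ is not a $v$-integer, and the finitely many bad places coming from Proposition~\ref{ppuis} / Corollary~\ref{croots} applied at each $A_j$. For $v\notin S$ the polynomial $g(U):=F(\tau,U)$, where $\tau=t(P)$ satisfies $|\tau-\alpha|_v<1$, is a degree-$n$ polynomial over $K_v$ that is $v$-adically close to $f(U):=F(\alpha,U)$ in the sense $|f-g|_v<1$, because the coefficients of $F$ are $v$-integral and $|\tau-\alpha|_v<1$. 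Moreover $f$ has $n$ simple roots that are pairwise $v$-adically far apart (distance $1$), by our choice of $S$, so hypotheses \eqref{erootsint}–\eqref{ederiv} of Theorem~\ref{troots} hold with all $e_i=1$.

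Next, given $P\in X(\overline{K_v})$ with $|t(P)-\alpha|_v<1$: the point $P$ determines $\tau=t(P)$ and $\omega=u(P)$, which satisfy $F(\tau,\omega)=0$, i.e.\ $\omega$ is a root of $g$. Since $|f-g|_v<1$, Theorem~\ref{troots} tells us $\omega$ lies within $v$-adic distance $<1$ of exactly one root $u(A_j)$ of $f$; in particular $\omega$ is a $v$-integer and $g$ has a simple root near $u(A_j)$. Now $g$ is monic up to a $v$-unit scalar, has $v$-integral coefficients, and $\omega$ is a simple root, so $|g'(\omega)|_v=1$ (the derivative at a simple root of a polynomial that factors into far-apart simple roots over the residue field is a unit). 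By the contrapositive of Corollary~\ref{cderiv}, $K_v(\omega)$ is unramified over $K_v$. Finally, because $A_j$ is $(t,u)$-regular we have $K_v(P)=K_v(\tau,\omega)=K_v(\omega)$ — here one uses that $\tau\in K_v$ already, and Proposition~\ref{ppuis} identifies $\tau$ as a power series in, equivalently $\omega$ lies over $\tau$, so the residue-field/completion version of Proposition~\ref{pnsing}(\ref{ifieldef}) gives the equality after localizing. Hence $K_v(P)$ is unramified over $K_v$.

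The main obstacle I anticipate is the bookkeeping around the auxiliary function $u$ and the identification $K_v(P)=K_v(\omega)$: one must choose $u$ once and for all (defined over $K$, or over a fixed finite extension which is harmless), verify that the $(t,u)$-regularity of each $A_j$ survives reduction modulo $v$ for all but finitely many $v$, and check carefully that the local field generated by $P$ is the one generated by $\omega=u(P)$ rather than something larger — this is where one quietly uses that $P$ is the \emph{unique} point of $X(\overline{K_v})$ above $(\tau,\omega)$ that is $v$-adically close to $A_j$, which follows from the Hensel-type uniqueness already exploited in Proposition~\ref{ppuis}. Once that is pinned down, everything else is a routine "enlarge $S$ finitely many times" argument. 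I also need to handle the case $\alpha=\infty$ (or $u(A_j)=\infty$ for some $j$) by the same device used in the proof of Proposition~\ref{ppuis}: pass to $t^{-1}$ or $u^{-1}$, absorbing leading coefficients into the $S$-units, which changes nothing essential.
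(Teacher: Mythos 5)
Your proof follows essentially the same route as the paper's: pick an auxiliary $u$ so that $t^{-1}(\alpha)$ consists of $n$ distinct $(t,u)$-regular points where $u$ is finite with distinct values, set $F(t,u)=0$, arrange $S$ so that $|f_\tau-f_0|_v<1$ forces $|f_\tau'(\omega)|_v=1$ for the root $\omega=u(P)$, and conclude via Corollary~\ref{cderiv}. Two small remarks on where you diverge. First, your detour through Theorem~\ref{troots} is correct but heavier than necessary: once one has $|f_0(\omega)|_v<1$, $|f_0|_v$ and the leading coefficient unit, and $|f_0'|_v=1$ at each root of $f_0$, the bound $|f_\tau'(\omega)|_v=1$ drops out directly without partitioning the roots of $f_\tau$. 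Second, and more seriously, your identification ${K_v(P)=K_v(\tau,\omega)=K_v(\omega)}$ is justified by the $(t,u)$-regularity of $A_j$, but Proposition~\ref{pnsing}(\ref{ifieldef}) requires the regularity of $P$ itself, which is a different point. You recognize this and propose patching it with a Hensel-type uniqueness of $P$ above $(\tau,\omega)$ near $A_j$; that can be made to work (indeed $|F_U'(\tau,\omega)|_v=1$ already shows $P$ is $(t,u)$-regular via Proposition~\ref{pnsing}(\ref{insig})), but the paper sidesteps the whole issue more cleanly: it extends $K$ at the outset so that all $(t,u)$-singular points are $K$-rational, and then observes that if $P$ happens to be singular then $K_v(P)=K_v$ and there is nothing to prove, while if $P$ is regular the identification is immediate. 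That trivial dichotomy replaces the uniqueness argument you were reaching for.
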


}

This theorem is  easy (it is, basically, an application of Hensel's lemma), but quite useful. In fact, a similar statement is absolutely crucial in~\cite{BG16}.  

Theorem~\ref{tram} has a partial converse: for almost all~$v$, if  $K_v(P)$ ramifies over~$K_v$ then $t(P)$ must be $v$-adically close to a critical value. 

\begin{theorem}
\label{tclose}
Let~$K$ be a number field,~$X$ a smooth projective algebraic curve over~$K$ and ${t\in K(X)}$ a non-constant $K$-rational function. Assume 
that all the critical values belong to ${K\cup\{\infty\}}$.
Then there exist  a finite set 
${S\subset M_K}$  such that for every ${v\in M_K\smallsetminus S}$   the following holds. Let ${P\in X(\overline{K_v})}$ be such that ${t(P)\in K_v}$ and  the field  $K_v(P)$ ramifies over~$K_v$. Then there exists a unique critical value~$\alpha$ such that ${|t(P)-\alpha|_v<1}$. 
\end{theorem}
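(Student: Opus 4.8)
The plan is to reduce the statement to Theorem~\ref{tnram} and Proposition~\ref{pderiv} by means of a plane model. Fix ${u\in K(X)}$ with ${K(X)=K(t,u)}$ — such~$u$ exists because the $(t,u)$-singular points form a finite set, so $(t,u)$-regular points exist — and, after multiplying~$u$ by a suitable element of $K[t]$, assume that~$u$ is integral over $K[t]$. Let ${\Phi(T,U)\in K[T][U]}$ be the minimal polynomial of~$u$ over $K(t)$; it is monic in~$U$, of $U$-degree ${n=\deg t}$, and ${\Delta(T)=\mathrm{disc}_U\Phi}$ is a nonzero element of $K[T]$. Into~$S$ I will throw: all infinite places; all places at which some coefficient of~$\Phi$ or~$\Delta$, or some root of~$\Delta$ (taken in a fixed number field containing all of them), is not a $v$-adic integer, or at which the leading coefficient of~$\Delta$ is not a $v$-adic unit; the places at which two distinct critical values (or a finite critical value and~$\infty$) acquire the same reduction; the places ramified in $K(Q)$ for some $(t,u)$-singular point~$Q$; and finitely many further sets described below.

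Uniqueness of~$\alpha$ is then immediate: for ${v\notin S}$ the critical values, which lie in ${K\cup\{\infty\}}$ by hypothesis, have pairwise distinct reductions, so $t(P)$ can be $v$-adically close to at most one of them. For existence, suppose ${K_v(P)}$ is ramified over~$K_v$ and put ${\tau=t(P)\in K_v}$. If ${|\tau|_v>1}$ then $t(P)$ is $v$-adically close to~$\infty$; if~$\infty$ is a critical value we are done, and otherwise Theorem~\ref{tnram} applied with ${\alpha=\infty}$ (after adding its exceptional set to~$S$) forces ${K_v(P)}$ to be unramified, a contradiction. So we may assume that~$\tau$ lies in the valuation ring~$\OO_v$ of~$K_v$.

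If~$P$ were $(t,u)$-singular it would be one of the finitely many points~$Q$, whence ${K_v(P)=K_v(Q)}$ is unramified over~$K_v$ for ${v\notin S}$ — a contradiction again; so~$P$ is $(t,u)$-regular, and hence, by Proposition~\ref{pnsing} applied over~$K_v$ (where $(t,u)$-regularity is a geometric notion and the set of singular points is unchanged), ${K_v(P)=K_v(\tau,\omega)=K_v(\omega)}$, where ${\omega:=u(P)}$ is a root of the monic polynomial ${\Phi(\tau,U)\in\OO_v[U]}$, finite because~$u$ is integral over $K[t]$ and ${\tau\ne\infty}$. Since ${K_v(\omega)/K_v}$ is ramified, Proposition~\ref{pderiv} yields ${|\Phi'_U(\tau,\omega)|_v<1}$; combined with ${\Phi(\tau,\omega)=0}$ this says that the reduction of~$\omega$ is a multiple root of the reduction of ${\Phi(\tau,U)}$, so ${|\Delta(\tau)|_v<1}$. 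Writing ${\Delta(T)=c\prod_k(T-\rho_k)}$ with ${\rho_k\in\bar K}$, we conclude that ${|\tau-\rho_k|_v<1}$ for some root~$\rho_k$ of~$\Delta$.

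The remaining point — and the one I expect to be the main obstacle — is that~$\rho_k$ need not be a critical value: in general~$\Delta$ has extra roots coming from nodes of the plane model (two distinct unramified points of a fibre sharing the same $u$-value), and such roots need not even lie in~$K$. Every ${\rho_k\in\bar K}$ is nevertheless a value of~$t$, hence either a critical value — in which case ${\rho_k\in K}$ and $t(P)$ is $v$-adically close to it, as desired — or a non-critical value. In the latter case I apply Theorem~\ref{tnram} over the number field $K(\rho_k)$ with ${\alpha=\rho_k}$, which is not a critical value; this produces a finite exceptional set in $M_{K(\rho_k)}$, and I add to~$S$ the finitely many places of~$K$ lying below it, together with those ramified in ${K(\rho_k)/K}$, for each of the finitely many roots~$\rho_k$. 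Then for ${v\notin S}$ and any place ${w\mid v}$ of $K(\rho_k)$, the field ${K_v(P)\cdot K(\rho_k)_w=K(\rho_k)_w(P)}$ is unramified over $K(\rho_k)_w$ by Theorem~\ref{tnram}, while ${K(\rho_k)_w/K_v}$ is unramified, so ${K_v(P)/K_v}$ is unramified — contradicting our standing assumption. Hence the root~$\rho_k$ produced above is a critical value, which completes the proof.
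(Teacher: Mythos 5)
Your proof is correct, and it takes a genuinely different route from the paper's in the way it deals with the central difficulty: the discriminant (or $U$-resultant) $\Delta(T)$ of a plane model has, in general, \emph{spurious} roots coming from nodes of that model — places where two distinct unramified points of a fibre share the same $u$-value — and these need not be critical values of $t$, nor even lie in $K$. The paper eliminates such roots purely algebraically: it constructs a second auxiliary function $\tilde u$ taking distinct values on all the points lying over the roots of $R$, forms the corresponding resultant $\tilde R$, shows (Proposition~\ref{pcommon}) that the common roots of $R$ and $\tilde R$ are exactly the finite critical values, and then works with ${D=\gcd(R,\tilde R)}$ via a B\'ezout identity. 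You instead confront each spurious root $\rho_k$ directly, by invoking Theorem~\ref{tnram} over the base field $K(\rho_k)$ (where $\rho_k$ is a rational non-critical value) and descending the unramifiedness through the unramified extension $K(\rho_k)_w/K_v$. This saves you the construction of $\tilde u$ and the gcd argument, at the cost of applying Theorem~\ref{tnram} once per spurious root and keeping track of base change; both routes ultimately rely on Theorem~\ref{tnram} anyway (the paper uses it at the very end for the case $\alpha=\infty$ not critical). One small remark: the observation that $\rho_k$ ``is a value of $t$'' is not needed — all you use is that $\rho_k$ is either a critical value (hence in $K$ by hypothesis) or not. The rest of your bookkeeping (monicity of $\Phi$ so that $\omega$ is $v$-integral, disjointness of critical-value reductions for uniqueness, exclusion of ramification in the residue fields of the finitely many singular points, reduction of the discriminant commuting with specialization for monic polynomials) is all sound.
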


\begin{remark}
An alternative treatment of the principal results of this section, using the scheme-theoretic language, can be found in the Appendix due to Jean Gillibert. 
\end{remark}

\subsection{Proof of Theorem~\ref{tram}}
Remark first of all that we may replace~$K$ by a finite extension. Indeed, assume that the statement holds true with~$K$ replaced by a finite extension~$K'$, and let~$S'$ be the corresponding finite subset of $M_{K'}$. Then  the statement holds over~$K$ as well, if we define~$S$ as the set of places ${v\in M_K}$ which extend to some ${v'\in S'}$ or ramify in~$K'$.

We may assume  that ${\alpha=0}$. Let ${A\in X(\bar K)}$ be a ramification point of~$t$ such that ${t(A)=0}$. Extending the base field~$K$, we may assume that ${A\in X(K)}$. 
Pick a function ${u\in K(X)}$ with the following properties:
\begin{enumerate}

\item
\label{inuuone}
${v_A(u)=1}$;

\item
\label{insing}
for any point ${A'\in X(\bar K)}$, distinct from~$A$, we have 
$$
(t(A'),u(A'))\ne (t(A),u(A));
$$

\item
$u$ has no poles among the zeros of~$t$.

\end{enumerate} 
Observe that properties~\ref{inuuone} and~\ref{insing} above imply that~$A$ is a $(t,u)$-regular point of~$X$, as defined in Subsection~\ref{sscurves}. In particular, we have
${K(X)=K(t,u)}$.

Let
\begin{equation}
\label{eftu}
F(T,U)=a_n(T)U^n+\cdots+a_0(T)\in K[T,U]
\end{equation}
be such a polynomial that $F(t,U)$ is the minimal polynomial of~$u$ over $K[t]$. 
If~$\tau$ belongs to some extension of~$K$, we set ${f_\tau(U)=F(\tau,U)}$. Since~$u$ has no poles among the zeros of~$t$, the polynomial ${f_0(U)\in K[U]}$ is of degree ${n=\deg_UF}$, and ${u(A)=0}$ is its root of order ${e=e_A(t)}$. We normalize~$F$ to make~$f_0$ a monic polynomial (having leading coefficient~$1$). 

Now let ${S\supseteq M_K^\infty}$ be a finite subset of $M_K$ such that for any ${v\in M_K\smallsetminus S}$ the conclusion of Corollary~\ref{croots} holds for the polynomial~$f_0$ and its root~$0$, and the conclusion of  Theorem~\ref{tpuis} holds for the functions~$t,u$ and the point~$A$. 
Expanding the set~$S$, we may assume that for ${v\in M_K\smallsetminus S}$ the coefficients of $F(T,U)$ are $v$-adic integers. 

Now fix ${v\in M_K\smallsetminus S}$. Since the coefficients of $F(T,U)$ are $v$-adic integers, for any ${\tau\in K_v}$ with ${|\tau|_v<1}$ we have ${|f_\tau-f_0|_v<1}$. Clearly, ${\deg f_\tau\le n}$; but, since~$f_0$ is monic and ${|f_\tau-f_0|_v<1}$, we have ${\deg f_\tau=n}$. 

Corollary~\ref{croots} implies that~$f_\tau$ has a root ${\omega\in \overline{K_v}}$ with the property ${|\omega|_v<1}$. Since ${F(\tau,\omega)=0}$, there exists a point ${P\in X(\overline{K_v})}$ such that 
$$
t(P)=\tau, \qquad u(P)=\omega.
$$
For this point we have 
$$
|t(P)-t(A)|_v =|\tau|_v<1, \qquad  |u(P)-u(A)|_v=|\omega|_v<1
$$
(recall that ${t(A)=u(A)=0}$). Applying Theorem~\ref{tpuis},  
we find that 
${|\omega|_v=|\tau|_v^{1/e}}$. 

Now if ${ v(\tau)=1}$ then the field $K_v(\omega)$ must have ramification index at least~$e$ over~$K_v$. In particular, $K_v(P)$ is ramified over~$K_v$. \qed

\subsection{Proof of Theorem~\ref{tnram}}
As in the proof of Theorem~\ref{tram} we may assume ${\alpha=0}$ and we may replace our field~$K$ by a suitable finite extension. Thus, we extend~$K$ to have all points in the fiber ${t^{-1}(0)}$ defined over~$K$. Since~$0$ is not a critical value, ${t^{-1}(0)}$ consists of~$n$ distinct points (where~$n$ is the degree of~$t$). 

Now let ${u\in K[X]}$ be such that~$u$ takes pairwise distinct finite values at the points from ${t^{-1}(0)}$. This implies, in particular, that ${K(X)=K(t,u)}$. We define $F(T,U)$ as in the previous proof. Then the polynomial ${f_0(U)=F(0,U)\in K[U]}$ is of degree~$n$ and has~$n$ distinct simple roots in~$K$. In particular, ${f_0'(\alpha)\ne 0}$ for any root~$\alpha$ of~$f_0$. 

Further extending the field~$K$, we may assume that all $(t,u)$-singular points of~$X$ are $K$-rational.

Now let ${S\supseteq M_K^\infty}$ be a finite subset of $M_K$  such that  for ${v\in M_K\smallsetminus S}$ the coefficients of $F(T,U)$ are $v$-adic integers,  the leading coefficient of~$f_0$ is a $v$-adic unit, and ${|f_0'(\alpha)|_v=1}$ for any root~$\alpha$ of~$f_0$. 

Fix ${v\in M_K\smallsetminus S}$ and let ${P\in X(\overline{K_v})}$ be such that ${t(P)=\tau\in K}$ and ${|\tau|_v<1}$. We may assume~$P$ to be $(t,u)$-regular; otherwise ${K_v(P)=K_v}$ and there is nothing to prove.

As in the previous proof, the polynomial ${f_\tau(U)=F(\tau, U)}$ satisfies 
$$
|f_0-f_\tau|_v<1,
$$ 
and its leading coefficient is a $v$-adic unit. Since~$P$ is $(t,u)$-regular, we have ${K_v(P)=K_v(\omega)}$, where ${\omega=u(P)}$. This~$\omega$ is a root of~$f_\tau$, which implies that 
$$
|f_0(\omega)|_v= |f_0(\omega)-f_\tau(\omega)|_v<1.
$$ 
Since the leading coefficient of~$f_0$ is a $v$-adic unit, this means that ${|\omega-\alpha|_v<1}$ for some root~$\alpha$ of~$f_0$. It follows that ${|f_0'(\omega)|_v=1}$, which implies that ${|f_\tau'(\omega)|_v=1}$. Corollary~\ref{cderiv} now implies that ${K_v(\omega)=K_v(P)}$ is unramified  over~$K_v$, as wanted. \qed

\subsection{Proof of Theorem~\ref{tclose}}
Like in the proof of Theorem~\ref{tram}, one may replace~$K$ by a suitable finite extension. We will profit from it several times in this proof.

Let ${u\in K(X)}$ be such that ${K(t,u)=K(X)}$. As in the proof of Theorem~\ref{tram}, let ${F(T,U)\in K[T,U]}$ be such  that ${F(t,U)}$ is the minimal polynomial of~$u$ over $K[t]$. We denote by $R(T)$ the $U$-resultant of the polynomials~$F$ and $F'_U$. 
We claim the following.

\begin{proposition}
\label{presult}
There exists  a finite set 
${S\subset M_K}$  such that for every place ${v\in M_K\smallsetminus S}$   the following holds. Let ${P\in X(\overline{K_v})}$ be such that ${t(P)=\tau\in K_v}$ and  the field  $K_v(P)$ ramifies over~$K_v$. Then either ${|\tau|_v>1}$ or ${|R(\tau)|_v<1}$. 
\end{proposition}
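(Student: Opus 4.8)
The plan is to reduce the statement about ramification of $K_v(P)$ to a statement about the polynomial $F(T,U)$ and then apply Corollary~\ref{cderiv}. First I would enlarge the base field $K$ (which is harmless, as in the earlier proofs) so that all $(t,u)$-singular points are $K$-rational and, more importantly, so that the leading coefficient $a_n(T)$ of $F$ has all its roots in $K$; likewise I would want the minimal polynomial's structure to be ``visible'' over $K$. Then I would choose the finite set $S\supseteq M_K^\infty$ so that for $v\notin S$ the following all hold: the coefficients of $F(T,U)$ are $v$-adic integers; the leading coefficient $a_n(T)$ has $v$-adic integral coefficients and its nonzero values at the relevant points are $v$-adic units; and the resultant $R(T)$ (being a polynomial with coefficients in $K$) has $v$-adic integral coefficients.

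Next, fix $v\notin S$ and a point $P\in X(\overline{K_v})$ with $t(P)=\tau\in K_v$ such that $K_v(P)/K_v$ is ramified. If $P$ is $(t,u)$-singular then $K_v(P)=K_v$, contradicting ramification; so $P$ is $(t,u)$-regular, whence $K_v(P)=K_v(\omega)$ with $\omega=u(P)$, and $F(\tau,\omega)=0$. Assume $|\tau|_v\le 1$; I must show $|R(\tau)|_v<1$. Since $|\tau|_v\le 1$ and the coefficients of $F$ are $v$-adic integers, $f_\tau(U)=F(\tau,U)\in\OO_v[U]$. The key subtlety is the leading coefficient: $f_\tau$ may drop degree if $v(a_n(\tau))>0$. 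I would handle this by treating $a_n(\tau)\cdot\omega$: the element $\omega$ is a root of a monic polynomial obtained from $f_\tau$ by the standard ``multiply through by $a_n(\tau)^{n-1}$'' trick (so that $a_n(\tau)\omega$ is integral), and $K_v(a_n(\tau)\omega)=K_v(\omega)$ is still ramified. Then Corollary~\ref{cderiv}, applied to this monic integral polynomial, forces its derivative at the root to be non-$v$-adic-unit.

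From there I would translate ``the monic model of $f_\tau$ has a multiple root modulo $v$'' back into ``$f_\tau$ and $(f_\tau)'_U$ have a common root modulo $v$'', i.e. $|\mathrm{Res}_U(f_\tau,(f_\tau)'_U)|_v<1$. Since $\mathrm{Res}_U(f_\tau,(f_\tau)'_U)$ is, up to a power of $a_n(\tau)$, the specialization $R(\tau)$ of the resultant $R(T)=\mathrm{Res}_U(F,F'_U)$, and $a_n(\tau)$ is a $v$-adic unit (after further enlarging $S$ to exclude the finitely many primes dividing the non-unit specializations of $a_n$, or simply noting $|a_n(\tau)|_v\le1$ suffices for the direction we need), this gives $|R(\tau)|_v<1$, as required.

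The main obstacle I anticipate is the bookkeeping around the leading coefficient $a_n(T)$: ensuring that the resultant identity $\mathrm{Res}_U(f_\tau,(f_\tau)'_U)=a_n(\tau)^{?}\cdot R(\tau)$ has the right exponent and that degree drop of $f_\tau$ at $\tau$ does not break the argument. One clean way around this is to absorb $a_n(T)$ once and for all by replacing $u$ with $a_n(t)u$ (after checking this new function still generates $K(X)$ over $K(t)$ and still makes the relevant point regular), so that $F$ becomes monic in $U$; then $f_\tau$ is automatically monic of degree $n$ for every $\tau\in\OO_v$, Corollary~\ref{cderiv} applies directly, and the resultant identity $\mathrm{Res}_U(f_\tau,(f_\tau)'_U)=R(\tau)$ holds on the nose. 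Everything else is a routine application of Hensel/Corollary~\ref{cderiv} together with the definition of the resultant.
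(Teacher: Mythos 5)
The difficulty you rightly flag — the leading coefficient $a_n(T)$ — is exactly where your primary route leaves a gap. When $0<|a_n(\tau)|_v<1$, the roots of $f_\tau$ need not be $v$-adic integers, so the phrase ``$f_\tau$ and $(f_\tau)'_U$ have a common root modulo $v$'' is not meaningful, and the translation from the monic model $g(U)=a_n(\tau)^{n-1}f_\tau\bigl(U/a_n(\tau)\bigr)$ back to $R(\tau)$ introduces a power of $a_n(\tau)$ that goes the wrong way: from $\mathrm{disc}(g)=a_n(\tau)^{(n-1)(n-2)}\mathrm{disc}(f_\tau)$ and $R(\tau)=\pm a_n(\tau)\,\mathrm{disc}(f_\tau)$ one gets ${R(\tau)=\pm \mathrm{disc}(g)/a_n(\tau)^{\,n^2-3n+1}}$, and for $n\ge 3$ the exponent is positive, so $|\mathrm{disc}(g)|_v<1$ together with $|a_n(\tau)|_v<1$ does \emph{not} force $|R(\tau)|_v<1$. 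The remark that ``$|a_n(\tau)|_v\le1$ suffices for the direction we need'' is where the argument actually breaks.

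The paper closes this case by one simple observation you do not use: $a_n(T)\mid R(T)$ in $K[T]$ (since $R=\pm a_n\cdot\mathrm{disc}$), hence in $\OO_v[T]$ for all $v$ outside a finite set (because the leading coefficient of $a_n$ is then a $v$-adic unit, so Euclidean division stays integral). Thus $|a_n(\tau)|_v<1$ already gives $|R(\tau)|_v<1$, and one is reduced to the case $|a_n(\tau)|_v=1$, where Corollary~\ref{cderiv} applies \emph{directly} to $f_\tau$ (leading coefficient a unit — no monic model is needed), giving $|F'_U(\tau,\omega)|_v<1$. The paper then passes to $|R(\tau)|_v<1$ not via the resultant-specialization identity but via the B\'ezout identity $G\cdot F+H\cdot F'_U=R$ in $K[T,U]$, with $G$, $H$ having $v$-integral coefficients for $v\notin S$: substitute $T=\tau$, $U=\omega$, use $F(\tau,\omega)=0$ and $|\omega|_v\le 1$.

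Your fallback — replacing $u$ by $a_n(t)u$ so that $F$ becomes monic in $U$ — is a legitimate alternative and does close the argument: then $f_\tau$ has degree $n$ and unit leading coefficient for every integral $\tau$, its roots are integral, Corollary~\ref{cderiv} applies as is, and ${\mathrm{Res}_U\bigl(f_\tau,(f_\tau)'_U\bigr)=R(\tau)=\prod_j f_\tau'(\omega_j)}$ is $v$-integral term by term, so one small factor suffices. It is a bit longer than the paper's route (one must note $K(t,a_n(t)u)=K(X)$ and re-handle the $(t,u)$-singular set, which changes but remains finite and can be made $K$-rational after a base extension). So: your first argument as written has a genuine gap in the case $|a_n(\tau)|_v<1$; your alternative is sound but heavier than the paper's pair (B\'ezout plus $a_n\mid R$).
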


\paragraph{Proof}
Write $F(T,U)$ as in~\eqref{eftu}. Then
 ${a_n(T)\mid R[T]}$ in the ring ${K[T]}$. Furthermore, there exist polynomials ${G(T,U),H(T,U)\in K[T,U]}$ such that 
\begin{equation}
\label{ebezout}
G(T,U)F(T,U)+H(T,U)F'_U(T,U)=R(T).
\end{equation}

Now let ${S\supseteq M_K^\infty}$ be a finite set of places of~$K$  such that for  every ${v\in M_K\smallsetminus S}$ the coefficients of the polynomials~$F$,~$G$ and~$H$ are $v$-adic integers, and the leading coefficient of 
$a_n(T)$ is a $v$-adic unit. 

This implies, in particular, that the coefficients of $R(T)$ are $v$-adic integers as well, and that  
\begin{equation}
\label{edivides}
\text{${a_n(T)\mid R(T)}$ in ${\OO_v[T]}$},
\end{equation}
where~$\OO_v$ is local ring of~$K_v$.

Extending the base field~$K$, we may assume that all the $(t,u)$-singular points of~$X$ are $K$-rational.

Fix ${v\in M_K\smallsetminus S}$ and let~$P$ and~$\tau$ be as in the statement of the proposition. Assume that ${|\tau|_v\le 1}$ (otherwise there is nothing to prove). Since ${K_v(P)\ne K_v}$, the point~$P$ cannot be $(t,u)$-singular, and we obtain  ${K_v(P)=K_v(\omega)}$, where ${\omega=u(P)}$.

Now we have two cases. If ${|a_n(\tau)|_v<1}$ then ${|R(\tau)|_v<1}$ by~\eqref{edivides}.

And if ${|a_n(\tau)|_v=1}$ then Corollary~\ref{cderiv} applies to the root~$\omega$ of the polynomial ${f_\tau(U)=F(\tau,U)}$.  We obtain ${|F'_U(\tau,\omega)|_v<1}$. Substituting ${T=\tau}$ and ${U=\omega}$ in~\eqref{ebezout}, we obtain ${|R(\tau)|_v<1}$. Proposition~\ref{presult} is proved. \qed

\bigskip

Extending the base field~$K$, we may assume that all roots of $R(T)$ belong to~$K$. Extending it further, we may assume that all the points from the finite set 
$$
\calA=\{P\in X(\bar K): \text{$t(P)$ is a root of $R(T)$ or $\infty$}\},
$$
are $K$-rational. 

Now let ${\tilu\in K(X)}$ be such that ${K(t,\tilu)=K(X)}$ and~$\tilu$ has pairwise distinct finite values at the points from~$\calA$. (Existence of such~$\tilu$ easily follows from the weak approximation theorem.) We define for~$\tilu$ polynomials $\tilF(T,U)$ and $\tilR(T)$ in the same way as we defined~$F$ and~$R$ for~$u$. 

\begin{proposition}
\label{pcommon}
The only common roots of $R(T)$ and $\tilR(T)$  are the finite critical values of~$t$. 
\end{proposition}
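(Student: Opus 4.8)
The plan is to characterize the roots of $R(T)$ geometrically, and then observe that the auxiliary function $\tilu$ is chosen precisely to destroy the ``spurious'' part of this root set. Recall that $R(T)$ is the $U$-resultant of $F$ and $F'_U$, so a finite value $\tau\in\bar K$ is a root of $R(T)$ if and only if the polynomials $f_\tau(U)=F(\tau,U)$ and $(f_\tau)'(U)=F'_U(\tau,U)$ have a common root, \emph{or} the leading coefficient $a_n(\tau)$ vanishes (the resultant picks up the leading coefficient when the degree drops). By Proposition~\ref{pnsing}\eqref{insig}, a point $P\in X(\bar K)$ with $t(P)=\tau\neq\infty$ and $u(P)=\omega\neq\infty$ is $(t,u)$-singular exactly when $F'_T(\tau,\omega)=F'_U(\tau,\omega)=0$; but a common root of $f_\tau$ and $(f_\tau)'$ is a point where $F(\tau,\omega)=F'_U(\tau,\omega)=0$, which is a slightly different condition. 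So first I would sort out the precise dictionary: the roots $\tau$ of $R(T)$ other than $\infty$ are exactly those $\tau$ such that some $P\in X(\bar K)$ with $t(P)=\tau$ is either a pole of $u$, or a $(t,u)$-singular point, or a point where $t$ ramifies (the last because ramification of $t$ at $P$ forces $f_\tau$ to have a multiple root at $u(P)$). In particular, every finite critical value of $t$ is a root of $R(T)$; symmetrically, every finite critical value of $t$ is a root of $\tilR(T)$, since the ramification points of $t$ do not depend on the choice of the second coordinate. This gives one inclusion: the finite critical values of $t$ are common roots of $R$ and $\tilR$.

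For the reverse inclusion, suppose $\tau$ is a common root of $R(T)$ and $\tilR(T)$ and $\tau$ is \emph{not} a finite critical value of $t$ (and $\tau\neq\infty$, which is excluded from the statement in any case). Then no point of $t^{-1}(\tau)$ is a ramification point of $t$, so the fiber $t^{-1}(\tau)$ consists of $n$ distinct points $P_1,\dots,P_n\in X(\bar K)$. Because $\tau$ is a root of $R(T)$, by the dictionary above one of these $P_j$ must be a pole of $u$ or a $(t,u)$-singular point; in either case $t(P_j)=\tau$ means $P_j\in\calA$ (the set $\calA$ collects all points $P$ with $t(P)$ a root of $R(T)$ or $\infty$ — in particular all poles of $u$ lying over roots of $R$, and all $(t,u)$-singular points lying over roots of $R$). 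Similarly, since $\tau$ is a root of $\tilR(T)$, some $P_k\in t^{-1}(\tau)$ is a pole of $\tilu$ or a $(t,\tilu)$-singular point. Now the point of choosing $\tilu$ with pairwise distinct \emph{finite} values on $\calA$ is this: $\tilu$ has no pole on $\calA$, so in particular $\tilu$ has no pole over $\tau$ among the points of $\calA$; and a $(t,\tilu)$-singular point $P_k$ over $\tau$ would have to satisfy condition~\ref{idist} or~\ref{iord} failing. Condition~\ref{iord} cannot fail, since $P_k$ is not a ramification point of $t$ and $\tilu$ has a finite value there, so $\min\{v_{P_k}(t-\tau),v_{P_k}(\tilu-\tilu(P_k))\}=1$ already because $v_{P_k}(t-\tau)=1$. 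So condition~\ref{idist} must fail: there is a point $P'\neq P_k$ with $(t(P'),\tilu(P'))=(\tau,\tilu(P_k))$. Then $P'$ is also in the fiber $t^{-1}(\tau)$, hence $P'\in\{P_1,\dots,P_n\}$, and both $P_k$ and $P'$ are among the points over $\tau$ that lie in $\calA$ (since $P_k$ being $(t,\tilu)$-singular over a root $\tau$ of $\tilR$ forces... — here is the subtlety).

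Let me be careful about the last step, which I expect to be the main obstacle: I need $P_k$ and $P'$ to both lie in $\calA$ so that ``distinct finite values of $\tilu$ on $\calA$'' gives the contradiction $\tilu(P_k)\neq\tilu(P')$. The set $\calA$ is defined as $\{P: t(P) \text{ is a root of }R(T)\text{ or }\infty\}$, and since $\tau$ is a root of $R(T)$, the \emph{entire} fiber $t^{-1}(\tau)$ lies in $\calA$ — so $P_k,P'\in\calA$ automatically, once we know $\tau$ is a root of $R(T)$, which is part of the hypothesis. Good; that removes the difficulty. So $\tilu(P_k)\neq\tilu(P')$ by the defining property of $\tilu$, contradicting $\tilu(P')=\tilu(P_k)$. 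Hence no point over $\tau$ can be $(t,\tilu)$-singular or a pole of $\tilu$, so $\tau$ cannot be a root of $\tilR(T)$ — a contradiction. Therefore every common root $\tau\neq\infty$ of $R$ and $\tilR$ must be a finite critical value of $t$, completing the proof. The only genuine work is the dictionary in the first paragraph (roots of $R(T)$ $\leftrightarrow$ poles of $u$ $\cup$ $(t,u)$-singular points $\cup$ ramification points of $t$, all taken over the value $\tau$), which is a standard resultant-and-discriminant computation using Proposition~\ref{pnsing}\eqref{insig}; I would state it as a short lemma or inline claim and verify it by tracking how the resultant of $F$ and $F'_U$ factors, paying attention to the degree-drop term $a_n(\tau)$ and to poles of $u$, which correspond exactly to the roots of $a_n(T)$.
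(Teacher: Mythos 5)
Your argument is correct and rests on the same key observation as the paper's proof: since $\tau$ is a root of $R(T)$, the whole fiber $t^{-1}(\tau)$ lies in $\calA$, so the separating property of $\tilu$ (pairwise distinct finite values on $\calA$) is what kills $\tilR(\tau)$. The paper's proof is more economical, though: it observes directly that when $\alpha$ is a root of $R$ but not a critical value, $\tilu$ takes $n$ distinct finite values on the $n$-point fiber ${t^{-1}(\alpha)\subset\calA}$, so $\tilF(\alpha, U)$ is a degree-$n$ polynomial with $n$ simple roots, and hence ${\tilR(\alpha)\ne 0}$. Your bidirectional geometric dictionary for roots of the $U$-resultant (pole of the second coordinate, $(t,u)$-singular point, or ramification point of $t$ over $\tau$) is indeed correct -- the resultant factors as a product of the degree-drop term $a_n(\tau)$ and the discriminant, and both are accounted for -- but it is heavier scaffolding than the argument needs; the subsequent case analysis (pole of $\tilu$ / failure of R1 / failure of R2) unpacks by hand what the simple-roots observation gives in one step. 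You also prove the easy converse inclusion (finite critical values are common roots), which the paper leaves tacit and which is not in fact used in Proposition~\ref{palmost}.
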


\paragraph{Proof}
Let~$\alpha$ be a root of $R(T)$ but not a critical value of~$t$. Then the fiber ${t^{-1}(\alpha)}$ consists of~$n$ distinct points. By our choice of~$\tilu$, it takes at them~$n$ distinct finite values. It follows that the polynomial $\tilF(\alpha,U)$ is of degree~$n$ and has~$n$ distinct roots. It particular, $\tilF'_U(\alpha, U)$ does not vanish at any of these roots. Hence ${\tilR(\alpha)\ne 0}$, as wanted. \qed

\bigskip

Let  ${\calC\subset K\cup\{\infty\}}$ be the set of critical values of~$t$. 

\begin{proposition}
\label{palmost}
There exists a finite set ${S\subset M_K}$ such that for every place ${v\in M_K\smallsetminus S}$   the following holds. Let ${P\in X(\overline{K_v})}$ be such that ${t(P)=\tau\in K_v}$   and  the field  $K_v(P)$ ramifies over~$K_v$. Then there exists a unique ${\alpha\in \calC\cup\{\infty\}}$  such that ${|\tau-\alpha|_v<1}$.  
\end{proposition}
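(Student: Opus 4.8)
The plan is to derive the statement from Proposition~\ref{presult}, applied to \emph{both}~$u$ and~$\tilu$, together with Proposition~\ref{pcommon}. As in the previous proofs we are free to replace~$K$ by a finite extension, and we do so to ensure that all roots of $R(T)$, all roots of $\tilR(T)$, and all finite critical values of~$t$ lie in~$K$, and that all $(t,u)$- and $(t,\tilu)$-singular points are $K$-rational (so that the proof of Proposition~\ref{presult} applies verbatim to~$\tilu$, which was chosen with $K(t,\tilu)=K(X)$). Note that $R(T)$ and $\tilR(T)$ are nonzero, since $F(t,U)$ and $\tilF(t,U)$ are irreducible, hence separable, over $K(t)$ (we are in characteristic~$0$); after the extensions above they therefore split into linear factors over~$K$.

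First I would fix the exceptional set~$S$. Besides the infinite places, it will contain: the finite sets furnished by Proposition~\ref{presult} for~$u$ and for~$\tilu$; every place at which some root of $R(T)$, some root of $\tilR(T)$, or some finite critical value fails to be a $v$-adic integer, or at which the leading coefficient of $R(T)$ or of $\tilR(T)$ fails to be a $v$-adic unit; every place dividing $\alpha-\alpha'$ for some pair of distinct finite critical values $\alpha,\alpha'$; and every place dividing $\rho-\tilde\rho$ for some root~$\rho$ of $R(T)$ and some root~$\tilde\rho$ of $\tilR(T)$ with $\rho\ne\tilde\rho$. Each of these is a finite set, so~$S$ is finite.

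Now fix ${v\in M_K\smallsetminus S}$ and let~$P$ and $\tau=t(P)$ be as in the statement. \textbf{Existence.} If $|\tau|_v>1$, take $\alpha=\infty$. Otherwise $|\tau|_v\le 1$, and Proposition~\ref{presult} applied to~$u$ and to~$\tilu$ gives $|R(\tau)|_v<1$ and $|\tilR(\tau)|_v<1$. Writing $R(T)$ as its (unit) leading coefficient times $\prod(T-\rho)$ with the $\rho$'s $v$-adic integers, every factor $|\tau-\rho|_v$ is $\le 1$, so some $|\tau-\rho|_v<1$; likewise $|\tau-\tilde\rho|_v<1$ for some root $\tilde\rho$ of $\tilR$. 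Then $|\rho-\tilde\rho|_v<1$, whence $\rho=\tilde\rho$ by the choice of~$S$; thus $\rho$ is a common root of $R$ and $\tilR$, hence a finite critical value~$\alpha$ by Proposition~\ref{pcommon}, and $|\tau-\alpha|_v<1$. \textbf{Uniqueness.} Suppose $|\tau-\alpha_1|_v<1$ and $|\tau-\alpha_2|_v<1$ with $\alpha_1,\alpha_2\in\calC\cup\{\infty\}$. If both are finite then $|\alpha_1-\alpha_2|_v<1$, forcing $\alpha_1=\alpha_2$. If, say, $\alpha_2=\infty$, then $|\tau-\alpha_1|_v<1$ with $\alpha_1$ a $v$-adic integer gives $|\tau|_v\le 1$, contradicting $|\tau-\infty|_v=|\tau^{-1}|_v<1$; so this case cannot occur. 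This proves the proposition.

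The argument is largely bookkeeping. The only genuinely substantive point is that $\tau$ being $v$-adically close to a root of $R$ and to a root of $\tilR$ does not, by itself, place it near a \emph{common} root — and this is precisely why the proof introduces the second auxiliary function~$\tilu$ (so that, by Proposition~\ref{pcommon}, the common roots of $R$ and $\tilR$ are exactly the finite critical values) and discards the finitely many places modulo which a root of $R$ becomes congruent to a distinct root of $\tilR$.
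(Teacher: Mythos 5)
Your proof is correct and follows the same strategy as the paper: apply Proposition~\ref{presult} to both~$u$ and~$\tilu$, then use Proposition~\ref{pcommon} to identify common roots of~$R$ and~$\tilR$ with finite critical values. The only technical variation is in the final step: you extend~$K$ to split both~$R(T)$ and~$\tilR(T)$ into linear factors and then use the ultrametric inequality to produce a common root~$v$-adically close to~$\tau$, whereas the paper works instead with the monic gcd ${D(T)=\gcd\bigl(R(T),\tilR(T)\bigr)}$ and a Bezout identity ${D=ER+\tilE\tilR}$ to conclude ${|D(\tau)|_v<1}$; both routes are equally valid and of comparable length.
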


\paragraph{Proof}
Applying Proposition~\ref{presult} to both~$u$ and~$\tilu$, we find~$S$ such that for every ${v\in M_K\smallsetminus S}$   the following holds. Let~$P$ and~$\tau$ be as in the statement of Proposition~\ref{palmost}. Then either ${|\tau|_v>1}$ or 
\begin{equation}
\label{enearnear}
|R(\tau)|_v<1, \qquad |\tilR(\tau)|_v<1. 
\end{equation}

Expanding~$S$, we my assume that that all the finite critical values of~$t$ are $v$-adic integers and   ${|\alpha-\alpha'|_v =1}$ for any two distinct finite critical values $\alpha$ and~$\alpha'$. 

If ${|\tau|_v>1}$ then  ${\alpha=\infty}$ is as wanted. From now on assume that ${|\tau|_v\le 1}$. 

Define ${D(T)=\gcd\bigl(R(T),\tilR(T)\bigr)}$ in the ring $K[T]$. We normalize $D(T)$ to make it monic. Proposition~\ref{pcommon} implies that that all roots of $D(T)$ are finite critical values of~$t$.
Write 
\begin{equation}
\label{edet}
D(T)=E(T)R(T)+\tilE(T)\tilR(T)
\end{equation}
with some ${E(T), \tilE(T)\in K[T]}$. Further expanding~$S$, we may assume that for ${v\in M_K\smallsetminus S}$ the coefficients of~$E$,~$\tilE$,~$R$,~$\tilR$ are $v$-adic integers. 

Substituting ${T=\tau}$ in~\eqref{edet} and using~\eqref{enearnear}, we obtain ${|D(\tau)|_v<1}$. Since~$D$ is monic, this implies that ${|\tau-\alpha|_v<1}$ for some root~$\alpha$ of~$D$, which is a critical value of~$t$. And this~$\alpha$ is unique because ${|\alpha-\alpha'|_v =1}$ for distinct critical values $\alpha$ and~$\alpha'$. \qed

\bigskip

Now we are ready to complete the proof of Theorem~\ref{tclose}. If~$\infty$ is a critical value then Proposition~\ref{palmost} does the job. Now assume that~$\infty$ is not critical. Applying Theorem~\ref{tnram} with ${\alpha=\infty}$,  a suitably expanded~$S$ has the following property: if    ${v\in M_K\smallsetminus S}$   and ${P\in X(\overline{K_v})}$ are such that ${t(P)=\tau\in K_v}$   and    $K_v(P)$ ramifies over~$K_v$, then ${|\tau|_v\le 1}$. Hence in this case Proposition~\ref{palmost} can produce only a finite~$\alpha$, which is a critical value of~$t$. \qed

\subsection{The Critical Polynomial}
\label{ssdelta}

It would be convenient to have versions of Theorems~\ref{tram} and~\ref{tclose} not assuming that the finite critical values belong to~$K$. Let ${\Delta(T)}$ be the monic separable polynomial whose roots are exactly the finite critical values of~$t$. Then, clearly, ${\Delta(T)\in K[T]}$. 

\begin{theorem}
\label{tdelta}
There exists a finite set ${S\subset M_K}$ such that for any ${v\in M_K\smallsetminus S}$ and ${\tau \in K_v}$ the following holds. 

\begin{enumerate}
\item
\label{iram}
Assume that 

\begin{itemize}
\item
either ${v\bigl(\Delta(\tau)\bigr)=1}$,

\item
or~$\infty$ is a critical value and ${v(\tau)=-1}$. 
\end{itemize}
Then~$v$ ramifies in $K_v(P)$ for some ${P\in X(\overline{K_v})}$ with ${t(P)=\tau}$.

\item
\label{iclose}
Assume that~$v$ ramifies in $K_v(P)$ for some ${P\in X(\overline{K_v})}$ with ${t(P)=\tau}$.  Then 
\begin{itemize}
\item
either ${|\Delta(\tau)|_v<1}$,

\item
or~$\infty$ is a critical value and ${|\tau|_v>1}$. 
\end{itemize}
\end{enumerate}

\end{theorem}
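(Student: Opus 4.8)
The plan is to deduce the statement from Theorems~\ref{tram} and~\ref{tclose} by base change to a finite extension of~$K$ over which all critical values become rational. Fix a finite extension~$L/K$ containing all the finite critical values of~$t$ (for instance the splitting field of~$\Delta$ over~$K$), and set ${X_L=X\times_K L}$; this is again a smooth geometrically irreducible projective curve, ${t\in L(X_L)}$ has the same degree, and the critical values of~$t$ on~$X_L$ coincide with those on~$X$ and now all lie in ${L\cup\{\infty\}}$. Theorem~\ref{tram} (applied once for each of the finitely many critical values ${\alpha\in L\cup\{\infty\}}$) and Theorem~\ref{tclose} furnish finite subsets of~$M_L$ outside which their conclusions hold. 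I would then let ${S\subset M_K}$ be the finite set of places~$v$ such that either~$v$ ramifies in~$L$, or some ${w\mid v}$ lies in one of those subsets of~$M_L$, or some ${w\mid v}$ fails one of the mild conditions ``every root $\alpha_i$ of~$\Delta$ is a $w$-adic integer'' and ``${|\alpha_i-\alpha_j|_w=1}$ for ${i\ne j}$''. This~$S$ is finite and depends only on~$K$,~$X$,~$t$. Now fix ${v\in M_K\smallsetminus S}$, an element ${\tau\in K_v}$ and a place ${w\mid v}$ of~$L$; since ${v\notin S}$, the extension ${L_w/K_v}$ is unramified, so ${v(x)=w(x)}$ and ${|x|_v=|x|_w}$ for all ${x\in K_v}$, and the coefficients of~$\Delta$ are $w$-adic integers.

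The crucial point is a transfer of ramification. For any ${P\in X(\overline{K_v})}$ one has ${L_w(P)=L_w\cdot K_v(P)}$, and since ${L_w/K_v}$ is unramified, the compositum ${L_w(P)/K_v(P)}$ is unramified as well (adjoining an unramified extension to any extension stays unramified). Computing ramification indices along the two towers ${K_v\subseteq K_v(P)\subseteq L_w(P)}$ and ${K_v\subseteq L_w\subseteq L_w(P)}$ and using ${e(L_w/K_v)=1}$ gives ${e(K_v(P)/K_v)=e(L_w(P)/L_w)}$. Hence~$v$ ramifies in ${K_v(P)}$ if and only if~$w$ ramifies in ${L_w(P)}$.

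For part~\ref{iram}: suppose ${v(\Delta(\tau))=1}$. Then ${w(\tau)\ge0}$ (otherwise the leading term of the monic~$\Delta$ forces ${w(\Delta(\tau))=(\deg\Delta)\,w(\tau)<0}$), so in the factorization ${\Delta(\tau)=\prod_i(\tau-\alpha_i)}$ all factors are $w$-adic integers and ${\sum_i w(\tau-\alpha_i)=w(\Delta(\tau))=v(\Delta(\tau))=1}$; thus exactly one ${w(\tau-\alpha_i)}$ equals~$1$. Applying Theorem~\ref{tram} over~$L$ with that critical value yields ${P\in X(\overline{K_v})}$ with ${t(P)=\tau}$ and ${L_w(P)}$ ramified over~$L_w$, whence~$v$ ramifies in ${K_v(P)}$ by the transfer principle. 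If instead~$\infty$ is critical and ${v(\tau)=-1}$, then ${w(\tau^{-1})=1}$ and Theorem~\ref{tram} over~$L$ with ${\alpha=\infty}$ does the same. For part~\ref{iclose}: if~$v$ ramifies in ${K_v(P)}$ for some~$P$ with ${t(P)=\tau}$, then~$w$ ramifies in ${L_w(P)}$, so Theorem~\ref{tclose} over~$L$ yields a unique critical value~$\alpha$ with ${|\tau-\alpha|_w<1}$. If ${\alpha=\infty}$ then ${|\tau|_v=|\tau|_w>1}$ and~$\infty$ is critical, which is the second alternative. If~$\alpha$ is finite, then among the $w$-adically pairwise distinct $w$-integers~$\alpha_i$ exactly the factor ${|\tau-\alpha|_w}$ of ${|\Delta(\tau)|_w=\prod_i|\tau-\alpha_i|_w}$ is ${<1}$, the others being equal to~$1$; hence ${|\Delta(\tau)|_v=|\Delta(\tau)|_w<1}$, the first alternative.

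I expect the one point requiring genuine care to be the ramification transfer through the compositum ${L_w(P)=L_w\cdot K_v(P)}$, i.e.\ being certain that passing to the larger ground field~$L$ neither creates nor destroys arithmetic ramification at places of~$K$ unramified in~$L$; this is exactly the tower computation of ramification indices above. Everything else is the routine accounting of the finitely many excluded places together with elementary ultrametric estimates.
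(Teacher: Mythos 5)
Your proof is correct and follows essentially the same route as the paper: pass to a finite extension containing all the finite critical values, discard the finitely many places of~$K$ that ramify there or violate the conclusions of Theorems~\ref{tram} and~\ref{tclose}, and deduce the claim from those two theorems. You have simply made explicit the ramification transfer through the unramified extension ${L_w/K_v}$ and the ultrametric bookkeeping on the factors ${\tau-\alpha_i}$, steps which the paper compresses into a single sentence.
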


\paragraph{Proof}
If the statement holds true with~$K$ replaced by a finite extension~$K'$, then it holds over~$K$ as well: one only needs to exclude from consideration those finitely many places of~$K$ which ramify in~$K'$. 
This reduces the theorem to the case when all finite critical values belong to~$K$, when it becomes an immediate consequence of Theorems~\ref{tram} and~\ref{tclose}. \qed

\section{The Argument of Dvornicich and Zannier}
\label{sardz}

In this section we prove the following theorem, which is slightly stronger than Theorem~\ref{tmain}. 

\begin{theorem}
\label{tmainsize}
Let~$K$ be a number field of degree~$d$ over~$\Q$. Further, let~$X$ be a  curve over~$K$ of genus~$\bfg$ and ${t\in K(X)}$ a non-constant rational function of degree ${n\ge 2}$. 
There exist  real numbers ${c=c(K,\bfg,n)>0}$ and ${B_0=B_0(K,X,t)>1}$ such that the following holds. Pick ${P_\tau \in t^{-1}(\tau)}$ for every ${\tau \in \OO_K}$. Then for every ${B\ge B_0}$  the number field 
\begin{equation}
\label{ebigfield}
K(P_\tau:\ \tau\in \OO_K,\ \size\tau\le B)
\end{equation} 
is of degree at least $e^{cB^d/\log B}$ over~$K$. 
\end{theorem}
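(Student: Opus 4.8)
The plan is to follow the Dvornicich–Zannier strategy, but organised around the critical polynomial $\Delta(T)$ and the ramification dictionary of Theorem~\ref{tdelta}. The key idea is that if the field~$L$ in~\eqref{ebigfield} has small degree over~$K$, then only few primes can ramify in~$L$ (bounded by $O([L:K])$ up to the contribution of the discriminant of $L/K$, which is itself controlled by $[L:K]$ and the primes of bad reduction), and this forces many of the values~$\tau$ with $\size\tau\le B$ to be ``non-ramifying'', which in turn forces their fibers $t^{-1}(\tau)$ to be $K$-reducible — contradicting the quantitative Hilbert Irreducibility Theorem (Corollary~\ref{chilb}) as soon as $B$ is large.

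First I would fix a finite set~$S\subset M_K$ outside of which the conclusions of Theorem~\ref{tdelta}, Corollary~\ref{chilb}, Proposition~\ref{peis} and the various ``good reduction'' statements all hold, and enlarge it to contain the primes dividing $[L:K]$-torsion obstructions; put $L=K(P_\tau:\size\tau\le B)$ and suppose for contradiction that $[L:K]\le e^{cB^d/\log B}$ with $c$ to be chosen small. For each $\tau\in\OO_K$ with $v(\Delta(\tau))=1$ at some $v\notin S$, Theorem~\ref{tdelta}\eqref{iram} gives a point $P\in t^{-1}(\tau)$ with $K_v(P)/K_v$ ramified; since $P\in L\otimes_K K_v$ (as $P_\tau\in L$ and the fiber is a single $G_K$-orbit once it is $K$-irreducible — and if it is $K$-reducible we are already in the good case), $v$ ramifies in $L$. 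The number of primes of $\OO_K$ ramifying in $L$ is at most $C_1\log|d_{L/K}|/\log 2 + |S|$, and $\log|d_{L/K}|\le [L:K]\cdot C_2$ by the conductor-discriminant bound together with the fact that only primes in~$S$ and primes dividing $\mathrm{N}\Delta(\tau)$ for the relevant $\tau$ can ramify — more cleanly, one bounds the number of ramified primes directly by $C_3[L:K]$ using that $L$ is generated over $K$ by the $P_\tau$, each of bounded degree and bounded (controlled) discriminant over $K$.

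The counting step is then: among the $\gg B^d$ elements $\tau\in\OO_K$ with $\size\tau\le B$ (Lemma~\ref{lbas}-type volume count, or directly the fact that $\OO_K\cap\{\size{\cdot}\le B\}$ has $\asymp B^d$ elements), all but $O(B^{d/2})$ of them (those with $K$-reducible fiber, by Corollary~\ref{chilb}) must have the property that $v(\Delta(\tau))\ne 1$ for every $v\notin S$ ramifying in $L$, and $v(\Delta(\tau))=0$ for $v\notin S$ not ramifying in $L$ — in other words, writing the ideal $(\Delta(\tau))$, all prime factors outside $S$ lie among the $\le C_3[L:K]$ ramified primes, with multiplicities $\ge 2$ on those (or we could still have multiplicity~$1$ at a ramified prime, so one argues instead that the $S$-free part of $(\Delta(\tau))$ is supported on these few primes). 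A standard estimate — the number of $S$-integers of $K$ of size $\le B^{O(1)}$ all of whose prime factors lie in a fixed set $T$ of primes is at most $(C_4\log B)^{|T|}$ — then bounds the number of admissible $\tau$ by $(C_4\log B)^{C_3[L:K]+|S|}$. Comparing with $\gg B^d$ and taking logarithms gives $B^d\ll [L:K]\log\log B\cdot\log B$, hence $[L:K]\gg B^d/(\log B\log\log B)$; to remove the spurious $\log\log B$ one runs the $S$-unit/divisor count more carefully (the sharp bound is that integers $\le X$ supported on $r$ primes number $\le\binom{r+\log X}{r}\le (e(1+\log X)/r\cdot r)^{\dots}$, which already yields the clean $e^{cB^d/\log B}$ lower bound).

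The main obstacle I expect is the passage ``$P_\tau\in L$ and $K_v(P_\tau)/K_v$ ramified $\Rightarrow$ $v$ ramifies in $L$'', because $L$ is a large compositum and what Theorem~\ref{tdelta} hands us is ramification of \emph{some} point in the fiber, which need not be the chosen $P_\tau$ — but since (outside the thin exceptional set) the fiber is a single $G_K$-orbit, $K(P_\tau)$ is well-defined independently of the choice and contains a conjugate of the ramified point, so $v$ ramifies in $K(P_\tau)\subseteq L$; the only real work is bookkeeping to ensure the finitely many bad $\tau$ (reducible fiber, or $v\in S$, or $P_\tau$ $(t,u)$-singular) are absorbed into the $O(B^{d/2})$ and $|S|$ error terms. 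A secondary technical point is choosing the auxiliary function~$u$ and the set~$S$ uniformly so that $c$ depends only on $K,\bfg,n$ (not on $X,t$) — this is where the genus enters, via the bound on the number of ramification points/critical values coming from Riemann–Hurwitz, and via bounding the degree and discriminant of $K(P_\tau)/K$ in terms of $n$ alone for all but the $O(B^{d/2})$ exceptional~$\tau$.
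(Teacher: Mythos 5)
Your high-level instincts are sound — the critical polynomial $\Delta(T)$, the ramification dictionary of Theorem~\ref{tdelta}, Tchebotarev, and Hilbert irreducibility are indeed the ingredients — but the mechanism you use to turn ``ramification'' into ``large degree'' is not the one in the paper, and it has a genuine gap that cannot be patched.

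The gap is the inequality you invoke twice, in the forms ``$\log|d_{L/K}|\le C_2[L:K]$'' and ``the number of primes ramifying in $L$ is at most $C_3[L:K]$''. Neither statement is true: already ${L=\Q(\sqrt{p_1\cdots p_N})}$ has degree $2$ over $\Q$ while $N$ primes ramify, so the discriminant (and the number of ramified primes) is in no way controlled by the degree. There is no version of the conductor--discriminant relation, and no device exploiting that $L$ is generated by fields of bounded degree $n$, which yields such a bound; indeed the whole point of the theorem is that the discriminants of the fields $K(P_\tau)$ grow with $\tau$, so they are not ``controlled''. Moreover, even granting the false bound, your final comparison ``${B^d\ll[L:K]\log\log B\cdot\log B}$'' only yields $[L:K]\gg B^d/(\log B\log\log B)$, which is \emph{polynomial} in $B$; the claim that a sharper divisor count ``already yields the clean $e^{cB^d/\log B}$'' is a jump of a completely different order of magnitude and does not follow from that estimate in any form. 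To get an exponential lower bound on $[L:K]$ from a linear count of primes one would need the reverse implication ${r\le\log_2[L:K]}$, and that one is also false for the set of \emph{all} ramified primes, for the same $\Q(\sqrt{p_1\cdots p_N})$ reason.

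What is missing is precisely the Dvornicich--Zannier ``primitive place'' device, which the paper sets up for this purpose. One fixes an $\N$-ordering $\prec$ of $\OO_K$ compatible with $\size{\cdot}$ and calls a place $v$ \emph{primitive} for $\tau$ if $v$ ramifies in $K(t^{-1}(\tau))$ but does \emph{not} ramify in $K(t^{-1}(\tau'))$ for any $\tau'\prec\tau$. Adding $K(t^{-1}(\tau))$ to the compositum of the preceding fields introduces a fresh ramified place, hence at least doubles the degree; if moreover the fiber $t^{-1}(\tau)$ is $K$-irreducible, the same holds for $K(P_\tau)$, since $K(t^{-1}(\tau))$ is then its Galois closure. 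Thus the degree of~\eqref{ebigfield} over $K$ is at least $2^{|\Omega'(B)|}$, where $\Omega'(B)$ is the set of $\tau$ with $\size\tau\le B$, an irreducible fiber, and a primitive place of norm $\asymp B^d$. Proposition~\ref{pprim} (via Tchebotarev and the reduced-generator lemmas) shows many $v$ with $\norm v\asymp B^d$ are primitive for \emph{some} $\tau$ with $\size\tau\ll(\norm v)^{1/d}$, and Proposition~\ref{pfew} (via the $\Delta(\tau)$ norm bound) caps at $m$ the number of large places any single $\tau$ can absorb, so that $|\Omega'(B)|\gg B^d/\log B$ after subtracting the $O(B^{d/2})$ reducible $\tau$. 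This sequential, multiplicative accounting is what produces the exponential bound; your global contradiction argument, lacking it, is structurally unable to get past a polynomial lower bound, and the specific inequality you try to use to close it is false.
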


Theorem~\ref{tmain} is an immediate consequence because of~\eqref{esizeheight}. 
Another consequence is the following more precise version of Corollary~\ref{cmain}.

\begin{corollary}
\label{cmainsize}
Let~$K$,~$X$ and~$t$ be as in Theorem~\ref{tmainsize}. Then there exist  real numbers ${c=c(K,\bfg,n)>0}$ and ${B_0=B_0(K,X,t)>1}$ such that the following holds. Pick ${P_\tau \in t^{-1}(\tau)}$ for every ${\tau \in \OO_K}$. Then for every ${B\ge B_0}$, among the number fields 
$$
K(P_\tau) \qquad (\tau\in \OO_K,\quad \size\tau\le B)
$$ 
there are  at least $cB^d/\log B$ distinct fields.
\end{corollary}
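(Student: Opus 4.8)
The plan is to deduce Corollary~\ref{cmainsize} from Theorem~\ref{tmainsize} by a purely counting argument, exactly parallel to the deduction of Corollary~\ref{cdvz} from Theorem~\ref{tdvz}. First I would record that the number of ${\tau\in\OO_K}$ with ${\size\tau\le B}$ is ${\asymp B^d}$; more precisely it is ${\Omega(B^d)}$, since $\OO_K$ embeds as a full lattice in ${V=\R^{s_1}\times\bbC^{s_2}}$ and the box ${\{x:\size x\le B\}}$ has volume ${\gg B^d}$, so it contains ${\gg B^d}$ lattice points once ${B\ge B_0}$. (Only the lower bound is needed.) Fix such a $B$ and set ${L=K(P_\tau:\ \tau\in\OO_K,\ \size\tau\le B)}$; by Theorem~\ref{tmainsize}, ${[L:K]\ge e^{c_1B^d/\log B}}$ with ${c_1=c_1(K,\bfg,n)>0}$.

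Next I would bound ${[L:K]}$ in terms of the number $r$ of distinct fields among the ${K(P_\tau)}$ with ${\size\tau\le B}$. Each ${K(P_\tau)}$ is a subfield of $L$ of degree ${[K(P_\tau):K]\le n}$ over $K$; the compositum of all of them is exactly $L$. Writing the distinct fields as ${F_1,\dots,F_r}$, we get ${L=F_1\cdots F_r}$, hence ${[L:K]\le\prod_{j=1}^r[F_j:K]\le n^r}$. Therefore ${n^r\ge e^{c_1B^d/\log B}}$, which gives ${r\ge (c_1/\log n)\,B^d/\log B}$. Taking ${c=c_1/\log n}$ (which still depends only on $K$, $\bfg$, $n$) and the same ${B_0}$ finishes the proof.

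The only mildly delicate point is the elementary lattice estimate for the count of integers of bounded size; but this is standard (Minkowski-type volume comparison) and the constant may be absorbed into the choice of ${B_0}$, so there is no real obstacle. I would phrase the argument so that the reader sees it is the exact analogue of the ${K=\Q}$ case.

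\begin{proof}
Let ${c_1=c_1(K,\bfg,n)>0}$ and ${B_1=B_1(K,X,t)>1}$ be the constants furnished by Theorem~\ref{tmainsize}, so that for ${B\ge B_1}$ the field ${L=L_B=K(P_\tau:\ \tau\in\OO_K,\ \size\tau\le B)}$ satisfies ${[L:K]\ge e^{c_1B^d/\log B}}$.

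Identify~$K$ with its image under the diagonal embedding into ${V=\R^{s_1}\times\bbC^{s_2}}$; then~$\OO_K$ is a full lattice in~$V$, and the region ${\{x\in V:\size x\le B\}}$ is a box of volume ${\gg B^d}$. Hence there is a constant ${c_2=c_2(K)>0}$ and ${B_2=B_2(K)}$ such that for ${B\ge B_2}$ there are at least ${c_2B^d}$ elements ${\tau\in\OO_K}$ with ${\size\tau\le B}$. (Only this lower bound is needed.)

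Now fix ${B\ge B_0:=\max\{B_1,B_2\}}$ and let ${F_1,\dots,F_r}$ be the distinct fields among the ${K(P_\tau)}$ with ${\tau\in\OO_K}$, ${\size\tau\le B}$. Each~$F_j$ is a subfield of~$L$ with ${[F_j:K]\le n}$, and~$L$ is the compositum ${F_1\cdots F_r}$, so
$$
e^{c_1B^d/\log B}\le [L:K]\le\prod_{j=1}^r[F_j:K]\le n^r.
$$
Taking logarithms gives ${r\ge (c_1/\log n)\,B^d/\log B}$. Thus the corollary holds with ${c=c_1/\log n}$ and~$B_0$ as above; note ${c}$ depends only on~$K$,~$\bfg$,~$n$. \qed
\end{proof}
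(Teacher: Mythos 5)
Your proof is correct and follows the same "immediate consequence" route the paper has in mind (cf.\ the deduction of Corollary~\ref{cdvz} from Theorem~\ref{tdvz}): bound the degree of the compositum by $n^r$ and compare with the lower bound from Theorem~\ref{tmainsize}. One small remark: the paragraph counting lattice points of size $\le B$ is never used in your argument --- the bound $n^r \ge e^{c_1 B^d/\log B}$ already yields $r \ge (c_1/\log n)\,B^d/\log B$ directly, with no input about how many $\tau$ there are --- so it can simply be deleted and $B_0$ taken equal to the $B_0$ of Theorem~\ref{tmainsize}.
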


In the sequel~$K$,~$X$ and~$t$ as in the statement of Theorem~\ref{tmainsize}. Everywhere in this section we adopt the following conventions. 

\begin{itemize}

\item
``Sufficiently large'' means ``exceeding a certain quantity depending on~$K$,~$X$ and~$t$'';

\item
``Almost every'' means ``outside a finite set   depending on~$K$,~$X$ and~$t$''.

\end{itemize}

We will adapt the beautiful ramification argument of Dvornicich and Zannier~\cite{DZ94}, which, as they remark, traces back to the work of Davenport, Lewis and Schinzel~\cite{DLS64}. We refer to Section~3 of~\cite{BL16} for a concise exposition of the Dvornicich-Zannier argument over~$\Q$. 

Let~``$\prec$'' be a strict order relation on a countable set~$M$. We call it  \textsl{$\N$-ordering} if $(M,\prec)$ and $(\N,<)$ are isomorphic as ordered sets. 
We fix an $\N$-ordering~``$\prec$'' on the set~$\OO_K$ which makes the size function (non-strictly) increasing: for ${\beta, \beta'\in \OO_K}$ with ${\beta\prec\beta'}$ we have ${\size\beta\le\size{\beta'}}$.

We say that a place ${v\in M_K}$ is \textsl{primitive} for ${\tau \in \OO_K}$  if~$v$ ramifies in the field extension ${K\bigl(t^{-1}(\tau)\bigr)/K}$, but does not ramify in ${K\bigl(t^{-1}(\tau')\bigr)/K}$ for any ${\tau'\prec\tau}$.

Theorem~\ref{tmainsize} is a consequence of the following two statements. 

\begin{proposition}
\label{pprim}
Let~$\alpha$ be a finite critical value of~$t$. Then 
almost every ${v\in M_K}$ having an extension  ${w \in M_{K(\alpha)}}$ of degree ${[w:v]=1}$ serves as primitive for some ${\tau \in \OO_K}$ satisfying ${\size\tau\le\lambda
(\norm v)^{1/d}}$. Here ${\lambda\ge 1}$ depends only on~$K$. 
\end{proposition}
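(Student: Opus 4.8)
The plan is to combine the ``geometric ramification forces arithmetic ramification'' direction (Theorem~\ref{tram}, or rather its field-independent repackaging Theorem~\ref{tdelta}\eqref{iram}) with the counting Lemmas from Section~\ref{ssideals} to produce, for almost every suitable place $v$, a small integer $\tau$ with $v(\Delta(\tau))=1$ for which $v$ is \emph{new}, i.e.\ unramified in all earlier fibers. First I would fix a finite exceptional set $S$: discard the finitely many places where Theorem~\ref{tdelta} fails, the places dividing the leading coefficient or the discriminant of $\Delta$, the places of bad reduction, and the finitely many places ramifying in the splitting field of $\Delta$. For $v\notin S$ with a degree-one extension $w\in M_{K(\alpha)}$, the critical value $\alpha$ reduces to an element $\bar\alpha$ of the residue field $k_v=\OO_K/\gerp_v$, and $\bar\alpha$ is a simple root of $\bar\Delta\in k_v[T]$ (since $v\notin S$ avoids the discriminant of $\Delta$). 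Hensel's lemma then lifts $\bar\alpha$ to a root $\alpha_v\in\OO_{K_v}$ of $\Delta$ in $K_v$; moreover for \emph{any} $\tau\in\OO_K$ with $\tau\equiv\alpha_v\bmod\gerp_v$ but $\tau\not\equiv\alpha_v\bmod\gerp_v^2$ we get $v(\Delta(\tau))=v(\tau-\alpha_v)=1$ (the other factors of $\Delta(\tau)$ being $v$-adic units because distinct critical values stay distinct mod $\gerp_v$). By Theorem~\ref{tdelta}\eqref{iram} such a $\tau$ makes $v$ ramify in $K(t^{-1}(\tau))/K$.

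Next I would produce such a $\tau$ of controlled size and simultaneously arrange primitivity. The residue class of $\alpha_v$ modulo $\gerp_v^2$ is an element of $\OO_K/\gerp_v^2$; using Corollary~\ref{cmod} (reduced element in a residue class) applied to the ideal $\gerp_v^2$ — or more simply to $\gerp_v$ together with one adjustment by a reduced $\gerp_v$-primitive element from Corollary~\ref{cprim} — I can find $\tau\in\OO_K$ in the prescribed class with $\size\tau\le\lambda(\norm\gerp_v^2)^{1/d}=\lambda'(\norm v)^{2/d}$. That exponent $2/d$ is slightly worse than the claimed $1/d$; to get it down I instead pick $\tau$ to be the reduced $\gerp_v$-primitive element $\pi$ of Corollary~\ref{cprim} translated by a \emph{fixed} small integer $\tau_0$ (independent of $v$) chosen so that $\tau_0\equiv\alpha_v\bmod\gerp_v$ — but $\tau_0$ cannot depend on $v$, so the honest route is: take $\tau=\beta+\pi^2\gamma$ style corrections, or simply observe that Corollary~\ref{cmod} already gives $\size\tau\le\kappa(\norm\gerp_v)^{1/d}$ when we only need $\tau$ in a class mod $\gerp_v$ and then, noting $\tau$ and $\tau+\pi$ (with $\pi$ reduced $\gerp_v$-primitive, $\size\pi\le\kappa(\norm v)^{1/d}$) lie in different classes mod $\gerp_v^2$, at least one of the two has $v(\Delta(\cdot))=1$ while both have size $\le 2\kappa(\norm v)^{1/d}$. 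This gives $\size\tau\le\lambda(\norm v)^{1/d}$ with $\lambda$ depending only on $K$.

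Finally I would check primitivity. Suppose $v$ ramifies in $K(t^{-1}(\tau'))/K$ for some $\tau'\prec\tau$; by Theorem~\ref{tdelta}\eqref{iclose} this forces $|\Delta(\tau')|_v<1$ or ($\infty$ critical and $|\tau'|_v>1$). Since $\tau'\in\OO_K$ we have $|\tau'|_v\le 1$, ruling out the second case; so $v(\Delta(\tau'))\ge1$, hence $\tau'\equiv\alpha'_v\bmod\gerp_v$ for some root $\alpha'_v\in\OO_{K_v}$ of $\Delta$. If $\alpha'_v\ne\alpha_v$ then already $\tau$ and $\tau'$ reduce to distinct elements mod $\gerp_v$; if $\alpha'_v=\alpha_v$ then $\tau'\equiv\tau\bmod\gerp_v$. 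In the problematic sub-case both $\tau,\tau'$ reduce to $\bar\alpha$ mod $\gerp_v$: here I note $\tau'\prec\tau$ and $\size{\tau'}\le\size\tau\le\lambda(\norm v)^{1/d}$, and the number of $\tau'\in\OO_K$ with $\size{\tau'}\le\lambda(\norm v)^{1/d}$ lying in a fixed class mod $\gerp_v$ is $O\bigl((\norm v)^{1/d}/\,\ldots\bigr)$ — in fact this is where I must be careful: I should instead \emph{choose} among $\tau,\tau+\pi,\tau+2\pi,\dots,\tau+(C-1)\pi$ (a fixed number $C$ of candidates, each of size $\le C\kappa(\norm v)^{1/d}$, each in a distinct class mod $\gerp_v^2$ so at least one, say $\tau^\ast$, still has $v(\Delta(\tau^\ast))=1$) and pick the $\prec$-\emph{least} such candidate that is actually primitive; if none were primitive, each earlier offending $\tau'$ would also satisfy $v(\Delta(\tau'))=1$ by the same dichotomy, and then $v$ would be ramified in each of boundedly many earlier fibers, which is allowed — so the genuinely clean argument is: among the $\tau'\prec\tau$ with $v$ ramified in $K(t^{-1}(\tau'))$, all have $v(\Delta(\tau'))\ge1$; if additionally $v(\Delta(\tau'))=1$ then $v$ serves as primitive for the $\prec$-least such $\tau'$, which has $\size{\tau'}\le\size\tau\le\lambda(\norm v)^{1/d}$, and we are done; the remaining case $v(\Delta(\tau'))\ge2$ cannot occur for our specifically constructed $\tau$ but could for others, so we simply replace ``$\tau$'' in the statement by ``the $\prec$-least $\tau'$ on which $v$ ramifies'', which automatically has size $\le\lambda(\norm v)^{1/d}$ because our explicit $\tau$ is a witness that \emph{some} such $\tau'$ of that size exists.

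\emph{Main obstacle.} The delicate point is the size bound with the exponent $1/d$ rather than $2/d$: naively controlling the residue class modulo $\gerp_v^2$ (needed for $v(\Delta(\tau))=1$, not just $\ge1$) costs a factor $(\norm v)^{1/d}$. The fix — using a bounded number of translates by a reduced $\gerp_v$-primitive element $\pi$ of size $O((\norm v)^{1/d})$, which occupy $\gerp_v$-adically distinct residues mod $\gerp_v^2$ so that at least one is genuinely $\gerp_v$-primitive at $\Delta$ — keeps the size linear in $(\norm v)^{1/d}$, and then defining the primitive $\tau$ to be the $\prec$-least ramifying integer (necessarily of size $\le$ that of our explicit witness) closes the gap. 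I expect verifying these uniformity/size estimates, together with bookkeeping of which finitely many places to throw into $S$, to be the bulk of the work; the ramification input itself is entirely supplied by Theorem~\ref{tdelta}.
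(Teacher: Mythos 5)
Your proposal is correct and, once the meandering is stripped away, is essentially the paper's own argument: construct a small explicit $\gamma$ for which $v$ ramifies in $K\bigl(t^{-1}(\gamma)\bigr)$ by combining Corollaries~\ref{cmod} and~\ref{cprim} with an adjustment by a reduced $\gerp_v$-primitive element~$\pi$ to force the relevant valuation to equal exactly~$1$, then declare $\tau$ to be the $\prec$-least element on which $v$ ramifies, whose size is automatically bounded by that of the explicit witness. The only cosmetic difference is that you stay over $K$ via Theorem~\ref{tdelta}\eqref{iram} while the paper applies Theorem~\ref{tram} over $K(\alpha)$ and then descends from $w$ to $v$; since $[w:v]=1$ these are the same move, as your Hensel-lifted $\alpha_v\in\OO_{K_v}$ is exactly the image of $\alpha$ under the embedding $K(\alpha)\hookrightarrow K_v$ given by~$w$.
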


Recall that $\norm v$ denotes the $K/\Q$-norm of the prime ideal of~$v$, with the convention ${\norm v=1}$ for an infinite place~$v$. 

\begin{proposition}
\label{pfew}
Let~$m$ be the total number of  finite critical values of~$t$. Let~$\eps$ be a real number satisfying ${0<\eps\le 1}$. 
For  every ${\tau\in\OO_K}$ with ${\size\tau\ge \kappa\eps^{-m-1}}$ (where ${\kappa\ge 1}$ depends only on~$X$ and~$t$) there is at most~$m$ finite places~$v$ of~$K$ satisfying ${\norm v\ge \bigl(\eps \size\tau\bigr)^d}$ and ramified in $K(t^{-1}(\tau))$.
\end{proposition}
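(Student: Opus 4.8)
The plan is to show that a place $v$ ramified in $K(t^{-1}(\tau))$ with $\norm v$ large must be $v$-adically close to one of the $m$ critical values, and that two such places cannot be attached to the same critical value once $\size\tau$ is large enough. First I would invoke Theorem~\ref{tdelta}, item~\ref{iclose} (applied with the finite exceptional set absorbed into the ``almost every'' convention): for $v$ outside a fixed finite set, if $v$ ramifies in $K(t^{-1}(\tau))$ then either $|\Delta(\tau)|_v<1$, or $\infty$ is a critical value and $|\tau|_v>1$. The second alternative is easy to dispose of: $|\tau|_v>1$ forces $v(\tau)<0$, hence $v$ lies over a rational prime $p$ with $p\le\norm v$ dividing the denominator of $\tau$ in a suitable sense; more directly, $\prod_v\max\{1,|\tau|_v\}^{[K_v:\Q_v]}=\Height(\tau)^{[K:\Q]}\le\size\tau^d$, so the product of the $\norm v$ over the (finitely many) places with $|\tau|_v>1$ is at most $\size\tau^d$, and in particular any single such $v$ satisfies $\norm v\le\size\tau^d<(\eps\size\tau)^d$ once we note $\eps\le 1$ — wait, that inequality goes the wrong way, so instead I would argue that there can be \emph{at most one} such archimedean-type obstruction place with $\norm v\ge(\eps\size\tau)^d$ and fold it into the count of $m$; more carefully, the number of places $v$ with $|\tau|_v>1$ and $\norm v\ge(\eps\size\tau)^d$ is at most $d/\eps^d\cdot$(something), but this needs $\size\tau$ only moderately large, so it is subsumed. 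The cleaner route is: $v(\tau)\le -1$ contributes a factor $\norm v$ to $\Height(\tau)^d\le\size\tau^d$, so if several such $v$ all had $\norm v\ge(\eps\size\tau)^d$ we would get $\size\tau^d\ge(\eps\size\tau)^{d\cdot(\text{number of them})}$, forcing the number of them to be $1$ as soon as $\eps^d\size\tau^{d}>1$, i.e. $\size\tau>\eps^{-1}$; one such place is harmless.

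Next, for the main alternative, suppose $v$ is a finite place with $\norm v\ge(\eps\size\tau)^d$, ramified in $K(t^{-1}(\tau))$, and $|\Delta(\tau)|_v<1$. Factor $\Delta(T)=\prod_{j=1}^m(T-\alpha_j)$ over $\bar K$ (or over a finite extension $K'$, extending as in the proof of Theorem~\ref{tdelta}). Then $\prod_j|\tau-\alpha_j|_v<1$, so $|\tau-\alpha_j|_v<1$ for some $j$. I claim that, for $v$ outside a further fixed finite set, each $j$ can be hit by at most one such $v$. The point is a product-formula / height estimate: if $v_1,\dots,v_r$ are distinct finite places with $|\tau-\alpha_j|_{v_i}<1$ for each $i$, then $\tau-\alpha_j$ is a nonzero element of $\OO_{K'}$ (after clearing a fixed denominator depending only on $t$) divisible by primes of norm $\norm v_1,\dots,\norm v_r$, hence $\prod_i\norm v_i\le\Height(\tau-\alpha_j)^{[K':\Q]}\le\bigl(C\size\tau\bigr)^{[K':\Q]}$ for a constant $C$ depending only on $X,t$ (using $\size{\tau-\alpha_j}\le\size\tau+\size{\alpha_j}$ and $\size\tau$ large). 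If all $\norm v_i\ge(\eps\size\tau)^d$, this gives $(\eps\size\tau)^{dr}\le(C\size\tau)^{[K':\Q]}$, and since $d\cdot[K':\Q]/d = [K':\Q]$ and heights over $K'$ versus $K$ differ by the factor $[K':K]$ in the exponent — keeping track: over $K'$ the size and the bound $(\eps\size\tau)^d$ need the exponent $d'=[K':\Q]$, so the inequality reads $(\eps\size\tau)^{d' r/?}$... I would set this up directly over $K$ when the $\alpha_j$ lie in $K$ (which the reduction of Theorem~\ref{tdelta} allows us to assume after replacing $K$ by a finite extension and discarding ramified places): then $\prod_i\norm v_i\le\bigl(C\size\tau\bigr)^{d}$, so $(\eps\size\tau)^{dr}\le(C\size\tau)^d$, giving $r\le 1$ as soon as $(\eps\size\tau)\ge C^{?}$, concretely once $\size\tau\ge\kappa'\eps^{-1}$ — but we need the displayed threshold $\kappa\eps^{-m-1}$, which comes from iterating or from being generous with constants across the $m$ critical values and the finitely many auxiliary denominators; I will simply choose $\kappa$ large enough to cover all $m$ values simultaneously, so the exponent bookkeeping yields $\eps^{-m-1}$ comfortably.

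Combining: each place $v$ with $\norm v\ge(\eps\size\tau)^d$ ramified in $K(t^{-1}(\tau))$ is (outside a fixed finite set, absorbed into ``almost every'' / into $\kappa$ being large) associated either to one of the $m$ finite critical values $\alpha_j$ via $|\tau-\alpha_j|_v<1$, or to the single possible ``$\infty$''-type place; and each association is injective in $v$ once $\size\tau\ge\kappa\eps^{-m-1}$. Hence there are at most $m$ such places (the $\infty$-case either does not occur, when $\infty$ is not critical, or replaces one of the finite critical values in the count, since when $\infty$ is critical $\Delta$ has degree $m-1$... — here I would be careful to define $m$ as the total number of finite critical values as in the statement and check that the $\infty$-place still keeps the count at $m$; if $\infty$ is critical then $\deg\Delta=m$ can still hold under the convention, or one gains one slot from $\infty$ and loses nothing, so $m$ is safe). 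I would then state that $\kappa$ depends only on $X$ and $t$ through: the degree of the field extension needed to split $\Delta$, the common denominator of $t$ and of the $\alpha_j$, the constant $C$ above, and the size of the fixed finite exceptional set of places — all of which depend only on $X$ and $t$. The main obstacle I anticipate is purely the constant-tracking: getting the clean threshold $\size\tau\ge\kappa\eps^{-m-1}$ (rather than, say, $\eps^{-2}$ or $\eps^{-m}$) out of the product-formula estimates requires applying the divisibility bound once per critical value and being deliberately lossy, and one must make sure the finitely many ``bad'' places swept under ``almost every'' really do depend only on $X$ and $t$ and not on $\eps$ or $\tau$.
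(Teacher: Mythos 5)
There is a genuine gap in your handling of the ``$\infty$'' alternative of Theorem~\ref{tdelta}.\ref{iclose}. You spend considerable effort bounding places with ${|\tau|_v>1}$ and at the end you hand-wave with ``one gains one slot from~$\infty$ and loses nothing, so~$m$ is safe'' --- but if that alternative could occur for even one place with ${\norm v\ge(\eps\size\tau)^d}$, your argument would yield at most ${m+1}$ places, not~$m$. The missing observation is elementary and decisive: ${\tau\in\OO_K}$, so ${|\tau|_v\le 1}$ for \emph{every} finite place~$v$, and hence the second alternative of Theorem~\ref{tdelta}.\ref{iclose} simply never arises here. With that noted, the entire ``archimedean-type obstruction'' discussion evaporates and one is left with ${|\Delta(\tau)|_v<1}$ as the only possibility.

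For the remaining count your route is a genuine alternative to the paper's. You split ${\Delta(T)=\prod_j(T-\alpha_j)}$ over an extension and show each~$\alpha_j$ is attached, via ${|\tau-\alpha_j|_v<1}$, to at most one large place, by a norm/size estimate on ${\tau-\alpha_j}$. The paper keeps~$\Delta$ intact and treats all places at once: if ${v_1,\dots,v_{m+1}}$ are distinct with ${|\Delta(\tau)|_{v_i}<1}$ and ${\norm v_i\ge(\eps\size\tau)^d}$, then ${\norm v_1\cdots\norm v_{m+1}}$ divides the numerator of $\norm\Delta(\tau)$, which is ${\ll\size\tau^{md}}$ because ${\deg\Delta=m}$; comparing with the lower bound ${(\eps\size\tau)^{d(m+1)}}$ gives ${\size\tau\ll\eps^{-m-1}}$, a contradiction. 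This explains exactly where the stated threshold ${\eps^{-m-1}}$ comes from; your per-root estimate would in fact deliver the sharper ${\eps^{-2}}$, which is also acceptable. Finally, the paper disposes of the exceptional set of Theorem~\ref{tdelta} cleanly by noting that the hypotheses already force ${\norm v\ge(\eps\size\tau)^d\ge\kappa^d\eps^{-md}\ge\kappa}$ (using ${\eps\le1}$), so enlarging~$\kappa$ excludes all exceptional places; you should make this explicit rather than appealing to ``almost every''.
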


\subsection{Proof of Theorem~\ref{tmainsize}} 
In this subsection we prove Theorem~\ref{tmainsize} assuming validity of Propositions~\ref{pprim} and~\ref{pfew}.

Let~$\alpha$ be a finite critical value of~$t$ (which exists because ${n\ge 2}$). 
Denote by~$\Mu$ the set of ${v\in M_K}$ satisfying the hypothesis of Proposition~\ref{pprim}. The Tchebotarev Density Theorem implies that that there exists ${\delta>0}$ such that 
\begin{equation}
\label{etchebo}
\bigl|\{v\in \Mu: \norm v\le B\}\bigl|\sim \delta \frac{B}{\log B}
\end{equation}
as ${B\to\infty}$. This~$\delta$ can be estimated from below only in terms of ${\mu=[K(\alpha):K]}$; in fact, it is easy to see that 
\begin{equation}
\label{elowerdelta}
\delta\ge \frac1\mu\ge \frac1m,
\end{equation} 
where~$m$ is the total number of finite critical values.

Now, for a given ${B\ge 1}$ we define the following three sets:
\begin{align*}
\Mu(B)&=\left\{v\in \Mu: \left(\frac B{2\lambda}\right)^d\le \norm v\le \left(\frac B{\lambda}\right)^d\right\},\\
\Omega(B)&=\{\tau \in \OO_K: \text{$\tau$ has a primitive $v\in M(B)$}\},\\
\Omega'(B)&=\{\tau\in \Omega(B): \text{the fiber ${t^{-1}(\tau)}$ is $K$-irreducible}\}. 
\end{align*}
Proposition~\ref{pprim} implies that 
\begin{equation}
\label{etauleb}
\text{${\size\tau\le B}$ for every ${\tau \in \Omega(B)}$}. 
\end{equation}

If~$\tau$ admits a primitive~$v$ then 
 the field 
${K\bigl(t^{-1}(\tau)\bigr)}$  is not contained in the compositum of all the ``preceding'' fields $K\bigl(t^{-1}(\tau')\bigr)$ where  ${\tau'\prec \tau}$.
If, in addition to this, the fiber ${t^{-1}(\tau)}$ is $K$-irreducible, then the field $K\bigl(t^{-1}(\tau)\bigr)$ is the Galois closure (over~$K$) of $K(P_\tau)$, which implies that 
$K(P_\tau)$ is not contained in the compositum of the ``preceding'' fields 
$K(P_{\tau'})$ with ${\tau'\prec \tau}$.
Combining this with~\eqref{etauleb}, we conclude that the degree of the field~\eqref{ebigfield} over~$K$ is at least ${2^{|\Omega'(B)|}}$. We are left with proving that 
\begin{equation}
\label{eloweromega}
|\Omega'(B)|\ge c\frac{B^d}{\log B}, 
\end{equation}
where ${c>0}$ depends on~$K$,~$\bfg$ and~$n$.

Using~\eqref{etchebo} and~\eqref{elowerdelta}, we estimate 
$$
|\Mu(B)|\ge \frac\delta{2d\lambda^d}\frac{B^d}{\log B} \ge \frac1{2md\lambda^d}\frac{B^d}{\log B}
$$
for sufficiently large~$B$. 
Now Proposition~\ref{pfew} applied with ${\eps=(2\lambda)^{-1}}$ implies that, for sufficiently large~$B$ 
$$
|\Omega(B)|\ge \frac1m|\Mu(B)|\ge \frac1{2m^2d\lambda^d}\frac{B^d}{\log B}. 
$$
Further, Corollary~\ref{chilb} implies that 
$$
|\Omega'(B)|\ge |\Omega(B)|-O(B^{d/2})\ge \frac1{4m^2d\lambda^d}\frac{B^d}{\log B}. 
$$
for sufficiently large~$B$.

Finally, the Riemann-Hurwitz formula implies that ${m\le 2\bfg+2n-2}$. This proves~\eqref{eloweromega} with 
$$
c= \frac1{16(\bfg+n)^2d\lambda^d},
$$
as wanted.\qed

\subsection{Proof of Proposition~\ref{pprim}}

In this subsection we prove Proposition~\ref{pprim}. 
Let~$\alpha$,~$v$ and~$w$ be as in the statement of the proposition. Throwing away finitely many~$v$  we may assume that ${ w(\alpha)\ge 0}$. Since ${[w:v]=1}$, there exists ${\alpha'\in \OO_K}$ such that ${ w(\alpha-\alpha')>0}$. Corollaries~\ref{cmod} and~\ref{cprim} imply that there exist ${\gamma',\pi\in \OO_K}$ such that 
$$
 v(\gamma'-\alpha')>0, \quad  v(\pi) =1, \quad \size{\gamma'},\size\pi  \ll(\norm v)^{1/d}, 
$$
where in this proof the constants implied by the $O(\cdot)$-notation and by the Vinogradov $\ll$-notation depend only on~$K$. 

Now set 
$$
\gamma=
\begin{cases}
\gamma', & w(\gamma'-\alpha)=1,\\
\gamma'+\pi,&  w(\gamma'-\alpha)>1.
\end{cases}
$$
Then ${ w(\gamma-\alpha)=1}$ and ${\size{\gamma}\ll(\norm v)^{1/d}}$. 

Theorem~\ref{tram} (applied to the field $K(\alpha)$ instead of~$K$) implies that, unless our~$w$ belongs to a finite exceptional set, it ramifies in the field ${K\bigl(\alpha, t^{-1}(\gamma)\bigr)}$. It follows that~$v$ ramifies in the latter field as well. This means that~$v$ ramifies either in~$K(\alpha)$ (which is the case for only finitely many~$v$) or in ${K\bigl(t^{-1}(\gamma)\bigr)}$.

We have proved the following: for almost every ${v\in M_K}$ satisfying the hypothesis of the proposition, the set 
$$
\{\gamma\in \OO_K:  \text{$v$ ramifies in  ${K(t^{-1}(\gamma)}$}\}
$$
is non-empty and contains an element of size $O\bigl((\norm v)^{1/d}\bigr)$. Taking as~$\tau$ the smallest element of this set with respect to the ``$\prec$'' ordering, we complete the proof. \qed

\subsection{Proof of Proposition~\ref{pfew}}

If ${\norm v\ge \bigl(\eps\size\tau\bigr)^d}$ and ${\size\tau\ge \kappa\eps^{-m-1}}$ then 
$$
\norm v\ge\kappa^d\eps^{-md}\ge \kappa.
$$
Selecting~$\kappa$ sufficiently large, we may assume that the finitely many exceptional places from Theorem~\ref{tdelta}:\ref{iclose} are all of norm smaller than~$\kappa$. Hence 
we only have to count ${v\in M_K}$ satisfying 
\begin{equation}
\label{eimpossible}
|\Delta(\tau)|_v<1, \qquad \norm v\ge \bigl(\eps\size{\tau}\bigr)^d, 
\end{equation}
where $\Delta(T)$ is the ``critical polynomial'' from  Subsection~\ref{ssdelta}. Assuming that there is ${m+1}$ such places, we will show that ${\size\tau\ll \eps^{-m-1}}$,   where in this proof the constants implied by the $O(\cdot)$-notation and by the Vinogradov $\ll$-notation depend only on~$X$ and~$t$. 

Thus, let ${v_1, \ldots, v_{m+1}}$ be distinct places of~$K$ such that every ${v=v_i}$ satisfies~\eqref{eimpossible}. We denote by~$\norm$ the $K/\Q$-norm. Then $\norm v_1\cdots\norm v_{m+1}$ divides the numerator of the rational number $\norm\Delta(\tau)$. Since ${\deg\Delta=m}$, we have 
${|\norm\Delta(\tau)|\ll{\size\tau}^{md}}$, 
and the denominator of this number is $O(1)$. Hence 
$$
\norm v_1\cdots\norm v_{m+1}\ll{\size\tau}^{md}.
$$
On the other hand, the left-hand side is bounded from below by ${\bigl(\eps\size\tau\bigr)^{d(m+1)}}$. Comparing the lower and the upper bound, we obtain ${\size\tau\ll \eps^{-m-1}}$, as wanted. \qed

\renewcommand\thesection{A}

\section{Appendix (by Jean Gillibert)}

The aim of this appendix is to give a scheme-theoretic explanation of the relation between geometric and arithmetic ramification of covers of $\P^1$. More precisely, we shall give alternative statements and proofs for Theorems~\ref{tram},~\ref{tnram} and~\ref{tclose} of Section~\ref{sram}. We also give both a conceptual and explicit description of the set of ``bad'' places involved in these statements.

The results that we prove here are essentially due to Beckmann~\cite{Be91}. They have been subsequently generalized by Conrad~\cite{Co00}. However, it may be useful for the reader to have a short proof of the statements we are interested in. As we shall see, the main technical tool is Abhyankar's Lemma.

Let us recall the notation:~$K$ is a number field,~$X$ is a smooth projective curve over~$K$, and ${t:X\to \P^1}$ is a non-constant $K$-morphism. Then~$t$ is a finite map, and is {\'e}tale outside a divisor ${D\subset\P^1}$, that one calls the branch locus of~$t$.

Let ${\XX\to \P^1_{\OO_K}}$ be the normalization of $\P^1_{\OO_K}$ in the function field of~$X$ (via the map~$t$). The canonical map ${\XX\to \P^1_{\OO_K}}$ is a finite flat map with generic fiber~$t$, that we also denote by~$t$ by abuse of notation. Let ${\DD\subset \P^1}$ be the branch locus of ${t:\XX\to\P^1_{\OO_K}}$. Then, the scheme ${\P^1_{\OO_K}}$ being regular and the scheme $\XX$ being normal, the subscheme~$\DD$ is of pure codimension one, according to the Zariski-Nagata purity Theorem for the branch locus (See \cite[expos{\'e}~X, Theorem.~3.1]{sga1}). In other terms,~$\DD$ is a divisor on $\P^1_{\mathcal{O}_K}$.

Let $\overline{D}$ be the scheme-theoretic closure of $D$ in $\P^1_{\mathcal{O}_K}$. This is a horizontal divisor on $\P^1_{\mathcal{O}_K}$, with generic fiber $D$. Therefore, one can write
$$
\mathcal{D}=\overline{D}+V
$$
where $V$ is a vertical divisor (hence supported by a finite number of fibers). Let $S$ be the union of the following two finite sets of finite places of $K$:\footnote{This set $S$ is similar to that introduced by Beckmann. We note that Conrad considers a smaller set $S$ by allowing two horizontal components of $\mathcal{D}$ to intersect with multiplicity one, but for simplicity we stick to this definition.}
\begin{enumerate}
\item[(i)] the set of places supporting $V$,
\item[(ii)] the set of places above which two branch points of $t$ meet, or above which the field of definition of a branch point is ramified.
\end{enumerate}

By (i), the equality $\mathcal{D}=\overline{D}$ holds true in $\P^1_{\mathcal{O}_{K,S}}$. By (ii), $\overline{D}$ is a geometrically unibranch\footnote{Let us recall that a divisor is geometrically unibranch if its irreducible components are disjoint, and if this remains true after any {\'e}tale base change. For example, the nodal cubic $y^2=x^2(x+1)$ is an irreducible normal crossing divisor in $\P^2$, but is not geometrically unibranch.} divisor over $\P^1_{\mathcal{O}_{K,S}}$. In particular, it has strict normal crossings, and disjoint irreducible components.

\begin{remark}
\label{rmkA1}
If the curve $X$ and the map $t$ are explicitely given (by equations), then it is possible to compute the set $S$, without computing the integral model $\XX$. Indeed, the set of places supporting $V$ is just the set of $v$ for which the field extension $K(X)/K(\P^1)$ corresponding to $t$ is ramified at $v$, where $v$ is viewed as a valuation on $K(\P^1)$. The other piece of the set $S$ depends only on the knowledge of the set of branch points, and is defined in quite explicit terms.
\end{remark}

The reason for which we introduce the set $S$ lies in the following Lemma:

\begin{lemma}
\label{lemma1}
The map $t:\XX\to\P^1_{\mathcal{O}_{K,S}}$ is a tame cover with respect to the normal crossing divisor $\mathcal{D}$, in the sense of Grothendieck and Murre \cite[Definition~2.2.2]{GM71}. More precisely:
\begin{enumerate}
\item[1)] $t$ is finite,
\item[2)] $t$ is {\'e}tale outside $\mathcal{D}$,
\item[3)] every irreducible component of $\XX$ dominates an irreducible component of $\P^1_{\mathcal{O}_{K,S}}$,
\item[4)] $\XX$ is normal,
\item[5)] $\XX$ is tamely ramified above each $x\in \mathcal{D}$ of codimension $1$ in $\P^1_{\mathcal{O}_{K,S}}$.
\end{enumerate}
\end{lemma}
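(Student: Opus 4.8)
The plan is to verify the five conditions of the definition of a tame cover one by one, since most of them have already been established in the discussion preceding the statement. Items 1), 3) and 4) are essentially built into the construction: $t\colon\XX\to\P^1_{\OO_K}$ was defined as the normalization of $\P^1_{\OO_K}$ in the function field $K(X)$, so $\XX$ is normal by construction (item 4), the normalization map is finite because $\P^1_{\OO_K}$ is excellent (or Japanese/Nagata), giving item 1), and each irreducible component of $\XX$ lies over an irreducible component of $\P^1_{\OO_K}$ by the very definition of normalization in a field extension, giving item 3). Restricting the base from $\OO_K$ to $\OO_{K,S}$ changes none of this. So the real content is in items 2) and 5).

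\textbf{Item 2), \'etaleness outside $\mathcal D$.} By definition $\mathcal D$ is the branch locus of $t\colon\XX\to\P^1_{\OO_K}$, i.e.\ the closed subset over which $t$ fails to be \'etale, so over $\P^1_{\OO_K}\smallsetminus\mathcal D$ the map is \'etale by definition. (Here one uses that $t$ is finite flat, so "unramified" and "\'etale" coincide away from the branch locus; flatness holds because $\P^1_{\OO_K}$ is regular of dimension $2$, $\XX$ is Cohen--Macaulay being normal of dimension $2$, and a finite morphism between such is flat by "miracle flatness".) Again, base-changing to $\P^1_{\OO_{K,S}}$ preserves this.

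\textbf{Item 5), tame ramification in codimension one --- the main point.} This is where the set $S$ is used, and I expect it to be the only genuinely substantive step. Let $x\in\mathcal D$ be a codimension-one point of $\P^1_{\OO_{K,S}}$. Since $\mathcal D=\overline D$ over $\OO_{K,S}$ (the vertical part $V$ having been thrown into $S$ by (i)), $x$ is the generic point of a horizontal component of $\overline D$; such a component dominates $\Spec\OO_{K,S}$, so its generic point lies over the generic point of $\Spec\OO_{K,S}$, hence over a branch point of $t\colon X\to\P^1$ in characteristic zero. The local ring $\OO_{\P^1_{\OO_{K,S}},x}$ is a discrete valuation ring of residue characteristic zero (it is essentially a localization of a ring finitely generated over $K$, or more precisely its residue field is a finite extension of $K$ and hence of characteristic $0$), and ramification of a finite extension of a DVR of residue characteristic $0$ is automatically tame. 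This disposes of the horizontal components. The role of condition (ii) in the definition of $S$ is not needed for tameness per se but to ensure that $\overline D$ is geometrically unibranch with strict normal crossings, as already noted in the paragraph before the Lemma; I would simply cite that paragraph. The one thing to be careful about is making sure there are \emph{no} vertical components of $\mathcal D$ left over $\OO_{K,S}$ and no codimension-one points of $\mathcal D$ of positive residue characteristic --- but both are exactly what removing $S$ arranges: (i) kills the vertical divisor $V$, and any remaining codimension-one point of $\overline D$ is horizontal, hence of residue characteristic $0$.

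\textbf{Assembly.} Finally I would assemble: conditions 1)--5) hold, and by \cite[Definition~2.2.2]{GM71} this is exactly the assertion that $t\colon\XX\to\P^1_{\OO_{K,S}}$ is a tame cover relative to the normal crossing divisor $\mathcal D$. The only step requiring a real argument is item 5), and there the argument is short once one observes that over $\OO_{K,S}$ every codimension-one point of the branch divisor has residue characteristic zero; everything else is a matter of citing the construction and the preceding discussion. \qed
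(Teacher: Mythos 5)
Your proof is correct and takes essentially the same route as the paper's: items 1)--4) are read off from the construction of $\XX$ as the normalization of $\P^1_{\OO_K}$ in $K(X)$ and from the definition of $\mathcal D$ as the branch locus, and item 5) --- the only substantive point --- follows because over $\OO_{K,S}$ one has $\mathcal D=\overline D$, so every codimension-one point of $\mathcal D$ is horizontal and has residue field of characteristic zero, making tameness automatic. The extra details you supply (excellence for finiteness, miracle flatness, the observation that the residue field of a horizontal component's generic point is a finite extension of $K$) are all correct and simply spell out what the paper leaves implicit.
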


\begin{proof}
As we have seen above, 1) is true by construction of $t$, and 2) by definition of $\mathcal{D}$. The map $t$ is surjective, hence 3) holds. The scheme $\XX$ is normal by construction, so 4) is OK. It just remains to check 5). The local fields of codimension $1$ points of $\mathcal{D}$ have characteristic zero, because $\mathcal{D}=\overline{D}$, therefore the tameness assumption is automatically satisfied.
\end{proof}

We now recover the nice framework of the theory of tame covers by Grothen\-dieck and Murre. It is certainly possible to transpose these results in the language of log schemes, but we do not need such techniques for our purpose.

We are now ready to state the main result of this appendix.

\begin{theorem}
\label{appendix_theorem}
Let $v\notin S$, and let $P\in X(\bar{K_v})$ which is not a ramification point of $t$. Then the following holds:
\begin{enumerate}
\item[1)] If the extension $K_v(P)/K_v(t(P))$ is ramified, then there exists a branch point $\alpha\in\P^1(\bar{K_v})$ such that $v(t(P)-\alpha)\geq 1$.
\item[2)] If there exists a branch point $\alpha\in\P^1(\bar{K_v})$ such that $v(t(P)-\alpha)= 1$, then the extension $K_v(P)/K_v(t(P))$ is ramified.
\item[3)] More generally, if there exists a branch point $\alpha\in\P^1(\bar{K_v})$ such that $v(t(P)-\alpha)$ is strictly positive and not divisible by any of the (geometric) ramification indices of points in $t^{-1}(\alpha)$, then the extension $K_v(P)/K_v(t(P))$ is ramified.
\end{enumerate}
\end{theorem}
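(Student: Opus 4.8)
The plan is to reduce everything to a local statement about the tame cover $t:\XX\to\P^1_{\OO_{K,S}}$ near a codimension-one point of the branch divisor~$\DD$, and then apply Abhyankar's Lemma. Fix $v\notin S$, let $\tau=t(P)$, and let $\alpha\in\P^1(\bar K_v)$ be a branch point (over $\bar K_v$) lying in the residue disc of~$\tau$, i.e. with $v(\tau-\alpha)\ge 1$; if no such $\alpha$ exists then $\tau$ reduces to a point of $\P^1_{\OO_{K,S}}$ away from~$\DD$, so $t$ is étale there and $K_v(P)/K_v(\tau)$ is unramified — this already gives the contrapositive of part~1). For parts~2) and~3) we must, conversely, extract ramification of $K_v(P)/K_v(\tau)$ from the geometry at~$\alpha$. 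Since $v\notin S$, by construction $\overline D=\DD$ over $\OO_{K,S}$ is a geometrically unibranch strict-normal-crossings divisor, so after an unramified base change to $K_v$ (which does not affect the ramification question) the branch point $\alpha$ extends to a horizontal component of~$\DD$ meeting the special fibre transversally, and no two branch points collide mod~$v$.

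The key step is to pass to the complete local ring at the point $\bar\alpha\in\P^1_{\OO_{K,v}}$ where the horizontal component through $\alpha$ meets the closed fibre: this is a two-dimensional regular local ring with regular system of parameters $(s,\pi)$, where $s=T-\alpha$ (a local equation for the horizontal branch component) and $\pi$ a uniformizer of~$K_v$, and $\DD$ is cut out by $s$ alone — the closed fibre $\pi=0$ is \emph{not} a component of~$\DD$ because $\DD=\overline D$ over $\OO_{K,S}$. By Lemma~\ref{lemma1} the cover $t$ is tame along $s=0$, with geometric ramification indices $e_1,\dots,e_r$ equal to the ramification indices of the points of $t^{-1}(\alpha)$ (these are the same as for the generic fibre, again because no vertical component intervenes). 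Now Abhyankar's Lemma (in the form of \cite[Theorem~2.3.2]{GM71}, or the classical two-variable version) describes the normalization $\XX$ locally over the complete local ring: above the branch through~$\alpha$, each point of $\XX$ is obtained, up to an étale extension, by adjoining an $e_i$-th root of~$s$. Specializing $s\mapsto \tau-\alpha$, the point $P$ lies over one of these, so $K_v(P)$ is obtained from an unramified extension of $K_v$ by adjoining an $e_i$-th root of $\tau-\alpha$ (for the index $e_i$ attached to the branch containing the reduction of~$P$).

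From this local description both remaining parts follow by an elementary valuation count. The extension $K_v(\sqrt[e_i]{\tau-\alpha})/K_v$ has ramification index $e_i/\gcd(e_i, v(\tau-\alpha))$; hence $K_v(P)/K_v(\tau)$ is ramified precisely when $e_i \nmid v(\tau-\alpha)$. In part~2) we have $v(\tau-\alpha)=1$ and $e_i\ge 2$ (as $\alpha$ is genuinely a branch point, at least one $e_i>1$; and $v(\tau-\alpha)=1$ forces $\tau$ to be close to \emph{that} branch, matching the $e_i>1$ index — here one uses that distinct branch points do not collide mod~$v$, so the residue disc of $\tau$ contains exactly one branch point, namely this $\alpha$, whence the relevant index is $>1$), so $e_i\nmid 1$ and we get ramification. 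Part~3) is immediate: the hypothesis is exactly $e_i\nmid v(\tau-\alpha)$ for every $i$, a fortiori for the relevant one. The main obstacle I anticipate is being careful that the \emph{geometric} ramification indices of the integral model along the horizontal branch through~$\alpha$ coincide with those of the generic cover at the points of $t^{-1}(\alpha)$ — this is where $v\notin S$ (no vertical components of $\DD$, no collision of branch points, no ramification in the field of definition of branch points) is used decisively, and it must be invoked cleanly before Abhyankar's Lemma is applied.
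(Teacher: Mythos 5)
Your overall plan is the same as the paper's: reduce to the (strictly henselian, or complete) local ring of $\P^1_{\OO_v}$ at the closed point $\bar\alpha$ below $\alpha$, observe that $\DD$ is cut out there by $z=t-\alpha$ alone (no vertical component, $v\notin S$), invoke Lemma~\ref{lemma1} and Abhyankar's Lemma to see $\XX$ locally as a disjoint union of Kummer coverings $T^{e_i}-z$, specialize at $z\mapsto \tau-\alpha$, and read off that $K_v^{\nr}(P)$ is obtained by extracting an $e$-th root of $\tau-\alpha$ for the index $e$ attached to the component into which the integral section of $P$ specializes, hence ramified iff $e\nmid v(\tau-\alpha)$. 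The paper works in $\OO_v^{\sh}[[z]]$ and cites \cite[Cor.~2.3.6]{GM71} for the base change of the tame cover; you work in the complete local ring with parameters $(s,\pi)$; this is a cosmetic difference. Your treatment of part~1) matches the paper's.

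There is, however, a genuine gap in your argument for part~2). The Kummer picture gives ramification of $K_v(P)/K_v(\tau)$ precisely when $e_\beta\nmid v(\tau-\alpha)$, where $\beta$ is the \emph{specific} point of $t^{-1}(\alpha)$ to which the integral section of $P$ reduces; this $e_\beta$ is what your ``relevant index'' must mean. Your parenthetical tries to justify $e_\beta>1$ by arguing that ``the residue disc of $\tau$ contains exactly one branch point, namely $\alpha$, whence the relevant index is $>1$''. This confuses the downstairs residue disc of $\tau$ in $\P^1$ with the upstairs component of $\XX$ containing $P$. The fibre $t^{-1}(\alpha)$ of a branch point can perfectly well contain unramified points (indices $e_j=1$) alongside ramified ones, and nothing in the hypotheses pins $P$ to a ramified branch. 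If $P$ specializes to such an unramified $\beta$, then $K_v(P)/K_v(\tau)$ is unramified even though $v(\tau-\alpha)=1$, so the conclusion of 2) fails for that $P$. Your part~3) argument does not have this problem, because the hypothesis there rules out \emph{every} $e_i$ dividing $v(\tau-\alpha)$, a fortiori the one attached to $P$. The correct ``for the given $P$'' statement, which your Kummer computation does prove, is exactly the one recorded in Remark~\ref{rmkA3}: there is a unique $\beta\in t^{-1}(\alpha)$ meeting $P$ above $v$, and $K_v(P)/K_v(\tau)$ is ramified iff $e_\beta\nmid v(\tau-\alpha)$. (The paper's own disposal of 2) as ``a special case of 3)'' is also quick on this point, but it does not make the specific upstairs/downstairs confusion that your parenthetical does; you should either weaken 2) to an existence statement over the fibre of $\tau$ — as in Theorem~\ref{tram}, to which Hensel's lemma applies — or add the hypothesis that $P$ reduces to a ramified point of $t^{-1}(\alpha)$.)
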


In the statement above, we use the standard convention ${t(P)-\infty=t(P)^{-1}}$, as in the main text of the article.
We note that ${v(t(P)-\alpha)\geq 1}$ means that $t(P)$ and~$\alpha$ reduce to the same point in $\P^1(\bar{k_v})$, where $k_v$ denotes the residue field of $K_v$. In geometric language,  ${v(t(P)-\alpha)}$ is the intersection number between $t(P)$ and~$\alpha$.

Roughly speaking, Theorem~\ref{appendix_theorem} states that, outside the finite set $S$, the arithmetic ramification is controlled by the reduction of the geometric ramification.
The statement 1) above implies Theorems~\ref{tnram} and~\ref{tclose}, while statement 2) implies (via Hensel's lemma) Theorem~\ref{tram}. Our hypotheses are slightly weaker than in Section~\ref{sram}, in particular  branch points of~$t$ are not assumed to be rational over the base field.

First, we state a version of Abhyankar's Lemma, suitable for our purpose:

\begin{lemma}[Abhyankar's Lemma]
\label{abhyankar}
Let $Y$ be a noetherian normal scheme, and let $g:X\to Y$ be a tame cover with respect to a normal crossing divisor $D\subset Y$. Assume furthermore that $D$ is geometrically unibranch. Then for each $y\in \Supp(D)$ there exists an \'etale neighbourhood $U\to Y$ of $y$ such that $X\times_Y U\to U$ is a finite disjoint union of coverings of the form
$$
\Spec(\mathcal{O}_U[T]/(T^e-f))\longrightarrow U
$$
where $f=0$ is a local equation of $D$ in $U$, and $e$ is relatively prime to the characteristic of the residue field of $y$.
\end{lemma}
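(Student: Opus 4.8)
The plan is to reduce the assertion to the case of a strictly henselian local base, where it becomes the classical structure theorem for tamely ramified extensions. First I would use the hypothesis that $D$ is a geometrically unibranch normal crossing divisor: exactly one component of $D$ passes through the given point $y\in\Supp(D)$, and that component is smooth, so $\mathcal{O}_{Y,y}$ is regular and, after replacing $Y$ by an affine open neighbourhood of $y$, we may assume $D=V(f)$ with $f$ part of a regular system of parameters of $\mathcal{O}_{Y,y}$. Next, by a standard limit argument — writing $\mathcal{O}_{Y,y}^{\mathrm{sh}}=\varinjlim_i\mathcal{O}(U_i)$ as the filtered colimit over all étale neighbourhoods $(U_i,u_i)\to(Y,y)$, and using that $g\colon X\to Y$ is finite, hence of finite presentation — it suffices to produce the desired decomposition after base change to $A:=\mathcal{O}_{Y,y}^{\mathrm{sh}}$, since such an isomorphism of finitely presented $A$-schemes then spreads out to some $U_i$, which is the required étale neighbourhood. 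So from now on $Y=\Spec A$, with $A$ a strictly henselian regular local ring, and $D=V(f)$.

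Because $g$ is finite, $X$ is noetherian, hence has finitely many connected components; being moreover normal, each of them is $\Spec B$ for a normal domain $B$ that is finite over $A$, étale over $\Spec A_f$, and tamely ramified along $V(f)$. Since $A$ is strictly henselian and $B$ is connected and finite over $A$, the ring $B$ is local. It therefore suffices to show that any such $B$ is $A$-isomorphic to $A[T]/(T^e-f)$ for some integer $e$ prime to the residue characteristic of $y$; passing to a common étale refinement of the finitely many neighbourhoods thus produced then finishes the proof.

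To identify $B$, I would first localise at the generic point $\eta$ of $D$: here $\mathcal{O}_{Y,\eta}$ is a discrete valuation ring with uniformiser $f$, and $B\otimes_A\mathcal{O}_{Y,\eta}$ is a normal finite $\mathcal{O}_{Y,\eta}$-algebra which is étale over its fraction field and tamely ramified, so every ramification index that occurs is prime to the residue characteristic. The substantive point — this is exactly the content of Abhyankar's Lemma — is to promote this codimension-one description to all of $\Spec A$: the Zariski--Nagata purity theorem ensures that $\Spec B\to\Spec A$ is étale precisely outside $V(f)$, and a normality argument then shows that $B$ is generated over $A$ by a single element $T$ with $T^e=uf$, where $e$ is prime to the residue characteristic and $u\in A^\times$. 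Finally, since $A$ is strictly henselian and $e$ is invertible in $A$, Hensel's Lemma provides an $e$-th root of $u$ in $A^\times$, and the substitution $T\mapsto u^{1/e}T$ identifies $B$ with $A[T]/(T^e-f)$. The hard part is precisely this promotion step; if one prefers not to reprove it, it can be quoted wholesale from the Grothendieck--Murre theory of tame covers \cite[\S2]{GM71}, of which the present lemma is the special case relevant here, the remaining work being only the reduction to the strictly henselian local situation described above.
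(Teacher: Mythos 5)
Your proposal is correct in substance, but it takes a visibly more hands-on route than the paper, which disposes of the lemma in three sentences by citing \cite[Corollary~2.3.4, ii)]{GM71} directly: since $D$ is geometrically unibranch its irreducible components stay disjoint over any {\'e}tale neighbourhood of $y$, so the last part of that corollary yields a disjoint union of honest (not generalized) Kummer coverings, which is exactly the stated form. You instead reconstruct the skeleton of that corollary's proof: passing to the strict henselization $A=\mathcal{O}_{Y,y}^{\mathrm{sh}}$ by a finite-presentation limit argument, observing that the connected components of $X\times_Y\Spec A$ are spectra of finite normal local $A$-algebras $B$, and asserting a structure theorem $B\cong A[T]/(T^e-uf)$ via purity of the branch locus plus a ``normality argument'', with Hensel's lemma absorbing the unit $u$. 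The reduction to the strictly henselian local base is correctly executed and is a genuine addition in explicitness; your opening use of geometric unibranchness (only one component of $D$ through $y$, {\'e}tale-locally, hence $D=V(f)$ with $f$ part of a regular system of parameters) plays the same role as the paper's remark about components remaining disjoint. The one place where the two proofs genuinely differ in completeness is the ``promotion step'' from the generic point of $D$ to all of $\Spec A$: you sketch it and candidly defer the details to Grothendieck--Murre, while the paper never descends to this level because it never leaves the global {\'e}tale-neighbourhood formulation of GM's Corollary~2.3.4. That deferral is fine here --- the point where you invoke \cite[\S2]{GM71} is exactly the hard kernel, and you signal this honestly --- but be aware that if one insisted on making your sketch self-contained, the ``normality argument'' producing the generator $T$ with $T^e=uf$ is not a one-liner; it is essentially all of \cite[Theorem~2.3.2]{GM71}. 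As written, your proof and the paper's ultimately lean on the same GM result, yours by reconstructing the reduction and then citing the core, the paper's by citing the packaged corollary outright.
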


\begin{proof}
This follows from  \cite[Corollary~2.3.4, ii)]{GM71}. More precisely, $D$ being geometrically unibranch, the irreducible components of $D$ remain disjoint over any {\'e}tale neighbourhood of $y$ in $Y$. Hence the last sentence in the statement of \cite[Corollary~2.3.4, ii)]{GM71} implies that there exists an \'etale neighbourhood ${U\to Y}$ of~$y$ such that ${X\times_Y U\to U}$ is a finite disjoint union of Kummer coverings\footnote{See \cite[Definition~1.2.2]{GM71} for a definition.} with respect to the divisor $D\times_Y U$. \qed
\end{proof}

\begin{remark}
\label{rmkA2}
Under the hypoteses of Lemma~\ref{abhyankar}, $X$ is regular above the points of $\Supp(D)$, according to \cite[Proposition~1.8.5, ii)]{GM71}. In particular, if $Y$ is regular, then $X$ is regular. We note that this is no longer true if $D$ is an arbitrary normal crossing divisor: in this case, one can describe $X$ {\'e}tale locally as a \emph{generalized} Kummer covering\footnote{That is, a quotient of a Kummer covering, see \cite[Definition~1.3.8]{GM71}.} of $Y$. According to \cite[Proposition~1.8.5 iii)]{GM71}, such a covering may be singular above the intersection of two irreducible components of $D$. See also \cite[Lemma~1.4]{Co00} and the erratum.
\end{remark}

\paragraph{Proof of Theorem~\ref{appendix_theorem}}
1) Let $\mathcal{O}_v$ be the ring of integers of $K_v$. By projectivity of $\XX$ the point $P$ extends into a section $P:\Spec(\mathcal{O}_v(P))\to \XX$ where $\mathcal{O}_v(P)$ is the ring of integers of $K_v(P)$. By definition of the branch locus, if  $t(P)$  does not meets $\mathcal{D}$ (above $v$), then $\mathcal{O}_v(P)$ is an {\'e}tale $\mathcal{O}_v(t(P))$-algebra, that is, $K_v(P)/K_v(t(P))$ is unramified. By construction of $S$, the divisor $\mathcal{D}$ is the scheme-theoretic closure of $D$ in $\P^1_{\mathcal{O}_{K,S}}$, hence for $v\not\in S$ the subschemes $t(P)$ and $\mathcal{D}$ have non-empty intersection above $v$ if and only if there exists a branch point $\alpha\in \P^1(\bar{K_v})$ such that $t(P)$ and $\alpha$ reduce to the same point in $\P^1(\bar{k_v})$.

2) This is a special case of 3).

3) In order to prove the statement, we may assume that $t(P)$ belongs to $K_v$. Let $\mathcal{O}_v^{\sh}$ be the strict henselization of the ring of integers of $K_v$ with respect to the valuation $v$. Then the fraction field of $\mathcal{O}_v^{\sh}$ is $K_v^{\nr}$, the maximal unramified extension of $K_v$.

Let $\pi_v$ be a uniformizing parameter of $\mathcal{O}_v$, and let $k_v$ be the residue field of $\mathcal{O}_v$. Then $\pi_v$ is again a uniformizing parameter of $\mathcal{O}_v^{\sh}$, and the residue field of $\mathcal{O}_v^{\sh}$ is $\bar{k_v}$.


Let $\alpha\in\P^1(\bar{K_v})$ be a branch point of $t$ such that $v(t(P)-\alpha)>0$. Then, by construction of the set $S$, $\alpha$ belongs to $\P^1(K_v^{\nr})$. Up to composing $t$ by an automorphism of $\P^1$ defined over $K_v^{\nr}$, it is possible to assume that $\alpha=0$. Let $z$ be the standard coordinate function on $\P^1_{\mathcal{O}_v^{\sh}}$ which vanishes at $0$. Let $\bar{0}$ be the closed point of $\P^1_{\mathcal{O}_v^{\sh}}$ defined by $z=0$ on the special fiber.
Then $z=0$ is a local equation of $\mathcal{D}$ at the point $\bar{0}$ (this follows from the fact that $\mathcal{D}$ is horizontal with disjoint components). Moreover, the ring $\mathcal{O}_v^{\sh}[[z]]$ of formal power series is strictly henselian, because it is complete with respect to the $(z,\pi_v)$-adic topology and its residue field is $\bar{k_v}$ which is algebraically closed. We have a natural ``localization'' map $\Spec(\mathcal{O}_v^{\sh}[[z]])\to \P^1_{\mathcal{O}_v^{\sh}}$ which sends the closed point to $\bar{0}$.

According to Lemma~\ref{lemma1}, the map $t:\XX\to \P^1_{\mathcal{O}_{K,S}}$ is a tame cover with respect to the normal crossing divisor $\mathcal{D}$. Hence, it follows from \cite[Cor.~2.3.6]{GM71} that the pull-back
$$
\XX\times_{\P^1} \Spec(\mathcal{O}_v^{\sh}[[z]])\longrightarrow \Spec(\mathcal{O}_v^{\sh}[[z]])
$$
is a tame cover with respect to the divisor $\{z=0\}$.

Hence, according to Lemma~\ref{abhyankar} (Abhyankar's Lemma), this scheme is a disjoint union of connected Kummer coverings of the form
$$
\Spec(\mathcal{O}_v^{\sh}[[z]][T]/(T^e-z))\to \Spec(\mathcal{O}_v^{\sh}[[z]])
$$
where $e\geq 2$ is the ramification index of some point $\beta\in X$ lying above $\alpha$. Let us consider the connected cover containing the point $P$. We may specialize it at the integral section $t(P)$, which gives us the finite cover
$$
\Spec(\mathcal{O}_v^{\sh}[T]/(T^e-t(P))) \longrightarrow \Spec(\mathcal{O}_v^{\sh}).
$$
It follows that the field $K_v^{\nr}(P)$ contains at least one root of the polynomial $T^e-t(P)$. Such a field is a ramified extension of $K_v^{\nr}$ if and only if $e$ does not divides $v(t(P))$. Hence the result. \qed

\begin{remark}
\label{rmkA3}
According to the last part of the proof above, if there exists a branch point $\alpha$ such that $v(t(P)-\alpha)\geq 1$, then there exists a unique ramification point $\beta$ lying above $\alpha$ which meets $P$ above $v$. Moreover, if $v(t(P)-\alpha)$ is divisible by the ramification index of $\beta$, then the extension $K_v(P)/K_v(t(P))$ is unramified, according to the last sentence of the proof.
\end{remark}

\begin{example}
Let $\lambda\in \Q$, distinct from $0$ and $1$, and let $E$ be the elliptic curve defined over $\mathbb{Q}$ by the equation (in Legendre form)
$$
y^2=x(x-1)(x-\lambda).
$$
We consider the $x$-coordinate map $x:E\to\P^1$, which is a degree $2$ cover, and whose branch points are $0$, $1$, $\lambda$ and $\infty$. At the function field level, the corresponding extension is
$$
\mathbb{Q}\left(\sqrt{x(x-1)(x-\lambda)}\right)/\mathbb{Q}(x).
$$
We note that the only prime number which ramifies in this extension is $2$. This means that the vertical ramification is supported by $2$.

We are now looking for the set of primes above which two branch points meet. We note that the sections $0$, $1$ and $\infty$ never meet each other in $\P^1(\Z)$. Let $\lambda=\frac{a}{b}$ where $a$ and $b$ are coprime integers. Then $\lambda$ meets $0$ (resp. $\infty$) above primes dividing $a$ (resp. $b$). Similarly, $\lambda$ meets $1$ above primes dividing $a-b$. Therefore, the set $S$ is:
$$
S = \{2\}\cup \{\text{primes dividing $ab(a-b)$}\}.
$$
In the light of Remark~\ref{rmkA3}, our Theorem~\ref{appendix_theorem} reads as follows: let $p\not\in S$ be a prime number, and let $P\in E(\bar{\mathbb{Q}_p})$ which is not a ramification point of $x$. Then the extension
$$
\mathbb{Q}_p(P)=\mathbb{Q}_p\left(\sqrt{x(P)(x(P)-1)(x(P)-\lambda)}\right)/\mathbb{Q}_p(x(P))
$$
is ramified if and only if there exists a branch point $\alpha\in\{0, 1, \lambda, \infty\}$ such that $v_p(x(P)-\alpha)$ is strictly positive and odd.
\end{example}



{\footnotesize

}


\begin{thebibliography}{99}


\bibitem{Be91}
\textsc{S.~Beckmann,}
On extensions of number fields obtained by specializing branched coverings,
\textit{J. Reine Angew. Math.} \textbf{419} (1991), 27--53.









\bibitem{BG16}
\textsc{Yu.~Bilu, J.~Gillibert}, 
Chevalley-Weil Theorem and Subgroups of Class Groups, a manuscript.



\bibitem{BL05}
\textsc{Yu.~F.~Bilu, F.~Luca,} Divisibility of class numbers: enumerative approach, 
\textit{J. reine angew. Math.} \textbf{578} (2005), 79--91.

\bibitem{BL16}
\textsc{Yu.~F.~Bilu, F.~Luca,} Diversity in Parametric Families of Number Fields, a manuscript. 


\bibitem{Co00}
\textsc{B.~Conrad,}
Inertia groups and fibers,
\textit{J. Reine Angew. Math.} \textbf{522} (2000), 1--26.



\bibitem{DLS64}
\textsc{H.~Davenport, D.~Lewis, A.~Schinzel,} Polynomials of certain special types, \textit{Acta Arith.} \textbf{9} (1964),  107--116.


\bibitem{DZ94}
\textsc{R.~Dvornicich, U.~Zannier,}  Fields containing values of algebraic functions, \textit{Ann. Scuola Norm. Sup. Pisa Cl. Sci. (4)} \textbf{21} (1994),  421--443.


\bibitem{La02}
\textsc{S.~Lang}, \textit{Algebra (Revised Third Edition)}, GTM 211, Springer, 2002.



\bibitem{sga1}
\textsc{A. Grothendieck et al.},
\textit{Rev{\^e}tements {\'e}tales et groupe fondamental (SGA 1)}, 
Lecture Notes in Math. \textbf{224}, Springer-Verlag, 1971.

\bibitem{GM71}
\textsc{A.~Grothendieck, J.~P. Murre}, 
\textit{The tame fundamental group of a formal neighbourhood of a
  divisor with normal crossings on a scheme},
Lecture Notes in Math. \textbf{208}, Springer-Verlag, 1971.




\bibitem{Wa50}
\textsc{R.~J.~Walker}, Algebraic Curves,
Springer, 1950. 



\bibitem{Se97}
\textsc{J.-P.~Serre}, 
\textit{Lectures on the Mordell-Weil Theorem}, 3rd edition, Vieweg \& Sohn, 
Braunschweig, 1997.


\bibitem{Sc79}
\textsc{S.~H.~Schanuel,}  Heights in number fields, \textit{Bull. Soc. Math. France} \textbf{107} (1979),  433--449. 

\end{thebibliography}
\end{document}